\documentclass[11pt,reqno]{amsart}
\usepackage[nosumlimits]{mathtools}
\usepackage{amsthm}
\usepackage{float}
\usepackage{graphicx, enumerate, url}
\usepackage[margin=1in]{geometry}
\usepackage{amssymb,cool}
\usepackage{algorithm}
\usepackage{algpseudocode}
\usepackage{color}
\usepackage{tikz}
\usepackage{tikz-cd,mathtools}
\usepackage{bbm}
\usetikzlibrary{3d,calc}
\usepackage[low-sup]{subdepth}
\usepackage[normalem]{ulem}
\usepackage[font=small,skip=0pt]{caption}

\numberwithin{equation}{section}
\numberwithin{algorithm}{section}

\theoremstyle{plain}
\newtheorem{theorem}{Theorem}[section]
\newtheorem{proposition}[theorem]{Proposition}
\newtheorem{lemma}[theorem]{Lemma}
\newtheorem{corollary}[theorem]{Corollary}

\newtheorem{facts}[theorem]{Facts}

\theoremstyle{definition}
\newtheorem{definition}[theorem]{Definition}
\newtheorem{example}[theorem]{Example}

\theoremstyle{remark}

\newcommand{\tp}{{\scriptscriptstyle\mathsf{T}}}

\let\O\undefined
\let\E\undefined
\let\H\undefined

\DeclareMathOperator{\O}{O}
\DeclareMathOperator{\E}{E}
\DeclareMathOperator{\H}{H}

\DeclareMathOperator{\SO}{SO}
\DeclareMathOperator{\tr}{tr}
\DeclareMathOperator{\diag}{diag}
\DeclareMathOperator{\rank}{rank}

\DeclareMathOperator{\spn}{span}

\DeclareMathOperator{\GL}{GL}

\DeclareMathOperator{\V}{V}

\DeclareMathOperator{\Aut}{Aut}

\DeclareMathOperator{\Vol}{Vol}

\DeclareMathOperator{\im}{im}
\DeclareMathOperator{\Gr}{Gr}
\DeclareMathOperator{\Graff}{Graff}
\DeclareMathOperator{\Vaff}{Vaff}

\DeclareMathOperator{\Stab}{Stab}

\DeclareMathOperator{\GA}{GA}
\DeclareMathOperator{\St}{St}
\DeclareMathOperator{\Staff}{Staff}


\begin{document}
\title{The Grassmannian of affine subspaces}
\author[L.-H.~Lim]{Lek-Heng~Lim}
\address{Computational and Applied Mathematics Initiative, Department of Statistics,
University of Chicago, Chicago, IL 60637-1514.}
\email{lekheng@galton.uchicago.edu, kenwong@uchicago.edu}
\author[K.~S.-W.~Wong]{Ken~Sze-Wai~Wong}
\author[K.~Ye]{Ke Ye}
\address{KLMM, Academy of Mathematics and Systems Science, Chinese Academy of Sciences, Beijing 100190, China}
\email{keyk@amss.ac.cn}
\begin{abstract}
The Grassmannian of affine subspaces is a natural generalization of both the Euclidean space, points being $0$-dimensional affine subspaces, and the usual Grassmannian, linear subspaces being special cases of affine subspaces. We show that, like the Grassmannian, the affine Grassmannian has rich geometrical and topological properties: It has the structure of a homogeneous space, a differential manifold, an algebraic variety, a vector bundle, a classifying space, among many more structures; furthermore; it affords an analogue of Schubert calculus and its (co)homology and homotopy groups may be readily determined. On the other hand, like the Euclidean space, the affine Grassmannian serves as a concrete computational platform on which various distances, metrics, probability densities may be explicitly defined and computed via numerical linear algebra. Moreover, many standard problems in machine learning and statistics --- linear regression, errors-in-variables regression, principal components analysis, support vector machines, or more generally any problem that seeks linear relations among variables that either best represent them or separate them into components --- may be naturally formulated as problems on the affine Grassmannian.
\end{abstract}

\subjclass[2010]{14M15, 22F30, 46T12, 53C30, 57R22, 62H10}

\keywords{affine Grassmannian, 
affine subspaces, 
Schubert calculus,
homotopy and (co)homology,
probability densities, 
distances and metrics,
multivariate data analysis}

\maketitle
\section{Introduction}

The Grassmannian of affine subspaces,  denoted $\Graff(k,n)$, is an analogue of the usual Grassmannian $\Gr(k, n)$. Just as $\Gr(k, n)$ parameterizes $k$-dimensional \emph{linear} subspaces in $\mathbb{R}^n$,  $\Graff(k,n)$ parameterizes $k$-dimensional \emph{affine} subspaces in $\mathbb{R}^n$, i.e., $\mathbb{A} + b$ where the $k$-dimensional linear subspace $\mathbb{A} \subseteq \mathbb{R}^n$ is translated by a displacement vector $b \in \mathbb{R}^n$. 

To the best of our knowledge, the Grassmannian of affine subspaces was  first described in an elegant little volume \cite{KR} based on  Gian-Carlo Rota's 1986 `Lezioni Lincee' lectures at the Scuola Normale Superiore. The treatment in \cite[pp.~86--87]{KR}  was somewhat cursory as $\Graff(k,n)$ played only an auxiliary role in Rota's lectures (on geometric probability). Aside from another equally brief mention  in \cite[Section~9.1.3]{Nicolaescu}, we are unaware of any other discussion. Compared to its universally known cousin $\Gr(k,n)$, it is fair to say that $\Graff(k,n)$ has received next to no attention.

The goal of our article is to fill this gap. We will show that the Grassmannian of affine subspaces has rich algebraic, geometric, and topological properties; moreover, it is an important object that could rival the usual Grassmannian in practical applicability, serving as a computational and modeling platform for problems in statistical estimation and pattern recognition.
We start by showing that $\Graff(k,n)$ may be viewed from several perspectives, and in more than a dozen ways:
\begin{description}
\item[algebra] as collections of (i) Minkowski sums of sets, (ii) cosets in an additive group, (iii) $n \times (k+1)$ matrices;

\item[differential geometry] as a  (iv) smooth manifold, (v) homogeneous space, (vi) Riemmannian manifold, (vii) base space of the compact and noncompact affine Stiefel manifolds regarded as principal bundles;

\item[algebraic geometry] as a (viii) irreducible nonsingular algebraic variety, (ix)  Zariski open  dense subset of the Grassmannian, (x) real affine variety of projection matrices;

\item[algebraic topology] as a (xi) vector bundle, (xii) classifying space.
\end{description}
$\Graff(k,n)$ may also be regarded, in an appropriate sense, as the complement of $\Gr(k+1,n)$ in $\Gr(k+1,n+1)$, or, in a different sense, as the moduli space of $k$-dimensional affine subspaces in $\mathbb{R}^n$.
Moreover one may readily define, calculate, and compute various objects on $\Graff(k,n)$  of either theoretical or practical interests:
\begin{description}
\item[Schubert calculus] affine (a) flags, (b) Schubert varieties, (c) Schubert cycles;

\item[algebraic topology] (d) homotopy, (e) homology, (f) cohomology groups/ring;

\item[metric geometry] (g) distances,  (h) geodesic, (i) metrics;

\item[probability] (j) uniform, (k) von Mises--Fisher, (l) Langevin--Gaussian distributions.
\end{description}
The main reason for our optimism that $\Graff(k,n)$ may be no less important than $\Gr(k,n)$ in applications is the observation that common problems in multivariate data analysis and machine learning are naturally optimization problems over $\Graff(k,n)$:
\begin{description}
\item[statistics] (1) linear regression, (2) error-in-variables regression, (3) principal component analysis, (4)  support vector machines.
\end{description}
In retrospect this is no surprise, many statistical estimation problems involve a search for linear relations among variables and are therefore ultimately a problem of finding one or more affine subspaces that either best represent a given data set (regression) or best separate it into two or more components (classification).

In a companion article \cite{WYL}, we showed that in practical terms,  optimization problems over $\Graff(k,n)$ are no different from optimization problems over $\mathbb{R}^n$, which is of course just $\Graff(0,n)$. More precisely, we showed that, like the Euclidean space $\mathbb{R}^n$, $\Graff(k,n)$ serves the role of a concrete computational platform on which tangent spaces, Riemannian metric, exponential maps, parallel transports, gradients and Hessians of real-valued functions, optimization algorithms such as steepest descent, conjugate gradient, Newton methods, may all be efficiently computed using only standard numerical linear algebra.

For brevity, we will use the term \emph{affine Grassmannian} when referring to the Grassmannian of affine subspaces from this point onwards.  The term is now used far more commonly to refer to another very different object \cite{AR, FG, L} but in this article, it will always be used in the sense of Definition~\ref{def:graff}. To resolve the conflicting nomenclature, an alternative might be to christen the Grassmannian of affine subspaces the \emph{Rota Grassmannian}.

Unless otherwise noted, the results in  this article have not appeared before elsewhere to the best of our knowledge, although some of them are certainly routine for the experts. We have written our article with the hope that it would also be read by applied and computational mathematicians, statisticians, and engineers ---  in an effort to improve its accessibility, we have provided more basic details than is customary.

\section{Basic terminologies}\label{sec:basic}

We remind the reader of some basic terminologies. A  \emph{$k$-plane}  is a $k$-dimensional linear subspace and  a \emph{$k$-flat} is a $k$-dimensional affine subspace.  A \emph{$k$-frame} is an ordered  basis of a $k$-plane and we will regard it as an $n \times k$ matrix whose columns $a_1,\dots, a_k$ are the basis vectors. A \emph{flag} is a strictly increasing sequence of nested linear subspaces, $\mathbb{A}_0\subseteq \mathbb{A}_{1} \subseteq \mathbb{A}_2\subseteq \cdots $. A flag is said to be \emph{complete} if $\dim \mathbb{A}_k= k$, \emph{finite} if $k =0,1,\dots,n$, and \emph{infinite} if $k\in \mathbb{N} \cup \{0\}$. 
Throughout this article, a blackboard bold letter $\mathbb{A}$ will always denote a subspace and the corresponding normal letter $A$ will then denote a matrix of basis vectors (often but not necessarily orthonormal) of $\mathbb{A}$.

We write $\Gr(k,n)$ for the \emph{Grassmannian} of $k$-planes in $\mathbb{R}^n$, $\V(k,n)$ for the \emph{Stiefel manifold} of orthonormal $k$-frames, and $\O(n) \coloneqq \V(n,n)$ for the \emph{orthogonal group}. We may regard $\V(k,n)$ as a homogeneous space,
\begin{equation}\label{eq:stief}
\V(k,n) \cong \O(n)/\O(n-k),
\end{equation}
or more concretely as the set of $n \times k$ matrices with orthonormal columns.  
There is a right action of the orthogonal group $\O(k)$ on $\V(k,n)$: For $Q\in \O(k)$ and $A \in \V(k,n)$, the action  yields $AQ \in \V(k,n)$ and the resulting homogeneous space is $\Gr(k,n)$, i.e.,
\begin{equation}\label{eq:grass}
\Gr(k,n) \cong \V(k,n)/\O(k) \cong \O(n)/\bigl(\O(n-k) \times \O(k)\bigr).
\end{equation}
So $\mathbb{A} \in \Gr(k,n)$ may be identified with the equivalence class of its orthonormal $k$-frames $\{ AQ \in \V(k,n): Q \in \O(k)\}$. Note that $\spn(AQ) = \spn(A)$ for $Q \in \O(k)$. 

There is also a purely algebraic counterpart to the last paragraph, useful for generalizing to $k$-planes in a vector space that may not have an inner product (e.g., over fields of nonzero characteristics). We follow the terminologies and notations in \cite[Section~2]{AMS}. The \emph{noncompact Stiefel manifold} of $k$-frames is $\St(k,n) $. It may regarded as a homogeneous space
\begin{equation}\label{eq:ncstief}
\St(k,n) \cong \GL(n)/\GL(n-k),
\end{equation}
or more concretely as the set of $n \times k$ matrices with full rank.  
There is a right action of the general linear group $\GL(k)$ on $\St(k,n)$: For $X\in \GL(k)$ and $A \in \St(k,n)$, the action  yields $AX \in \St(k,n)$ and the resulting homogeneous space is $\Gr(k,n)$, i.e.,
\begin{equation}\label{eq:ncgrass}
\Gr(k,n) \cong \St(k,n)/\GL(k) \cong \GL(n)/\bigl(\GL(n-k) \times \GL(k)\bigr).
\end{equation}
So $\mathbb{A} \in \Gr(k,n)$ may be identified with the equivalence class of its $k$-frames $\{ AX \in \St(k,n): X \in \GL(k)\}$. Note that $\spn(AX) = \spn(A)$ for $X \in \GL(k)$. The reader would see that orthogonality has been avoided in this paragraph.

\section{Algebra of the affine Grassmannian}\label{sec:alg}

We will begin by discussing the set-theoretic and algebraic properties of the affine Grassmannian and introducing its two infinite-dimensional counterparts.
\begin{definition}[Affine Grassmannian]\label{def:graff}
Let $k<n$ be positive integers. The \emph{Grassmannian of $k$-dimensional affine subspaces} in $\mathbb{R}^n$ or Grassmannian of $k$-flats in $\mathbb{R}^n$, denoted by $\Graff(k,n)$, is the set of all $k$-dimensional  affine subspaces of $\mathbb{R}^n$. For an abstract vector space $V$, we write $\Graff_k(V)$ for the set of $k$-flats in $V$.
\end{definition}
This set-theoretic definition hardly reveals anything about the rich algebra, geometry, and topology of the affine Grassmannian, which we will examine over this and the next few sections.

We denote a $k$-dimensional affine subspace as $\mathbb{A}+b \in \Graff(k,n)$ where $\mathbb{A} \in \Gr(k,n)$ is a $k$-dimensional linear subspace and $b \in \mathbb{R}^n$ is the displacement of $\mathbb{A}$ from the origin. If $A = [a_1,\dots, a_k] \in \mathbb{R}^{n \times k}$ is a basis of $\mathbb{A}$, then
\begin{equation}\label{eq:affine}
\mathbb{A}+b \coloneqq \{\lambda_1 a_1 + \dots + \lambda_k a_k + b \in \mathbb{R}^n : \lambda_1,\dots,\lambda_k \in \mathbb{R} \}.
\end{equation}
The notation $\mathbb{A} + b$ may be taken to mean (i) the \emph{Minkowski sum} of the sets $\mathbb{A}$ and $\{b\}$ in the Euclidean space $\mathbb{R}^n$, (ii) a \emph{coset} of the subgroup $\mathbb{A}$ in the additive group $\mathbb{R}^n$, or (iii) a coset of the subspace  $\mathbb{A}$ in the vector space $\mathbb{R}^n$. The dimension of  $\mathbb{A} + b$ is defined to be the dimension of the vector space $\mathbb{A}$.  As one would expect of a coset representative, the displacement vector $b$ is not unique: For any $a \in \mathbb{A}$, we have $\mathbb{A} + b = \mathbb{A} + (a+ b)$.
We introduce a simple map that will be important later: the \emph{deaffine map}
\begin{equation}\label{eq:deaff}
\tau : \Graff(k,n)\to \Gr(k,n), \qquad \mathbb{A}+b \mapsto \mathbb{A}
\end{equation}
takes any affine subspace to its corresponding linear subspace.

Let $\mathbb{A}+b \in \Graff(k,n)$. By our notational convention, $\spn(A) = \mathbb{A}$ and therefore the  matrix $[A,b] \in \mathbb{R}^{n \times (k+1)}$ defines the affine subspace  $\mathbb{A}+b $ and we will call this its \emph{affine coordinates}. If in addition, we have  $A \in \V(k,n)$, i.e., an orthonormal basis for $\mathbb{A}$, and we choose $b_0 \in \mathbb{R}^n$ to be orthogonal to $\mathbb{A}$ with $\spn(A) + b_0 = \mathbb{A} + b$, then we call $[A,b_0] \in \V(k,n) \times \mathbb{R}^n$ an \emph{orthogonal affine coordinates} of $\mathbb{A} + b$.
Note that  $A^\tp  A = I$, $A^\tp  b_0 = 0$, and two orthogonal affine coordinates $[A, b_0], [A',b_0'] \in \mathbb{R}^{n \times (k+1)}$  of the same affine subspace $\mathbb{A} + b$ must have that $A' = AQ$ for some $Q \in \O(k)$ and $b_0' = b_0$.

We will also need to discuss the cases where $k = \infty$ and $n = \infty$ as they will be important in Sections~\ref{sec:top} and \ref{sec:metric}. For each $k \in \mathbb{N}$, the infinite flag $\{0\} \subseteq \mathbb{R} \subseteq \mathbb{R}^2 \subseteq \cdots$ induces a directed system
\begin{equation}\label{eqn:direct system}
\cdots \subseteq \Graff(k,n) \subseteq \Graff(k,n+1) \subseteq \cdots,
\end{equation}
and taking direct limit gives
\[
\Graff(k,\infty) \coloneqq \varinjlim \Graff(k,n),
\]
which we will call the \emph{infinite Grassmannian of $k$-dimensional affine linear subspaces} or \emph{infinite affine Grassmannian} for short. This parameterizes $k$-dimensional flats in $\mathbb{R}^n$ for \emph{all} $n \ge k$ and is the affine analogue of the infinite or Sato Grassmannian $\Gr(k, \infty)$ \cite{Sato}.

To be more precise, the direct limit above is taken in the directed system given by the natural inclusions $\iota_n : \Graff(k,n)\to \Graff(k,n+1)$ for $n \ge k$. If $\mathbb{A} + b \in \Graff(k,n)$ has affine coordinates $[A,b] \in \mathbb{R}^{n \times (k+1)}$, then $\iota_n(\mathbb{A} + b) = \mathbb{A}' + b'$ where
$\mathbb{A}' =\spn\begin{bsmallmatrix} A \\ 0 \end{bsmallmatrix}$, $b' = \begin{bsmallmatrix} b \\ 0 \end{bsmallmatrix}$,
i.e., $ \mathbb{A}' + b'\in \Graff(k,n+1)$ has affine coordinates
$\begin{bsmallmatrix} A  & b\\ 0 & 0 \end{bsmallmatrix} \in \mathbb{R}^{(n+1) \times (k+1)}$.
Readers unfamiliar with direct limits may simply identify $[A,b]$ with $\begin{bsmallmatrix} A  & b\\ 0 & 0 \end{bsmallmatrix}$ and thereby regard
\[
\Graff(k,n) \subseteq \Graff(k,n+1)\qquad \text{and} \qquad
\Graff(k,\infty)  = \bigcup\nolimits_{n = k}^\infty \Graff(k,n).
\]
It is straightforward to verify that the deaffine map $\tau: \Graff(k,n) \to \Gr(k,n)$ is compatible with the directed systems $\{\Graff(k,n)\}_{n=k}^{\infty}$ and $\{\Gr(k,n)\}_{n=k}^{\infty}$, i.e., the following diagram commutes:
\begin{equation}\label{eq:comm}
     \begin{tikzcd}
   \cdots \arrow[hookrightarrow]{r} \arrow{d}{\tau}  &  \Graff(k,n)  \arrow[hookrightarrow]{r}{\iota_n}  \arrow{d}{\tau}  & \Graff(k,n+1)   \arrow{d}{\tau}  \arrow[hookrightarrow]{r}  & \cdots \arrow{d}{\tau} \\
   \cdots \arrow[hookrightarrow]{r} &  \Gr(k,n) \arrow[hookrightarrow]{r}  &  \Gr(k,n+1)  \arrow[hookrightarrow]{r}  & \cdots 
     \end{tikzcd}
\end{equation}

Note that one advantage afforded by $\Graff(k,\infty)$ is that one may discuss a $k$-dimensional affine subspace without reference to an ambient space (although strictly speaking, points in  $\Graff(k,\infty)$ are $k$-flats in $\mathbb{R}^\infty \coloneqq \varinjlim \mathbb{R}^n$). The \emph{doubly infinite affine Grassmannian}, which parameterizes affine subspaces of all dimensions, may then be defined as the disjoint union
\[
\Graff(\infty,\infty) \coloneqq \coprod\nolimits_{k=1}^\infty \Graff(k,\infty).
\]
This is the affine analogue of $\Gr(\infty,\infty)$, the doubly infinite Grassmannian of linear subspaces of all dimensions, defined in \cite[Section~5]{YL}.

For the affine Grassmannian, two groups will play the roles that $\O(n)$ and $\GL(n)$ play for the Grassmannian in Section~\ref{sec:basic}. We  defer the discussion to Section~\ref{sec:diff} but will introduce the relevant algebra here. The \emph{group of orthogonal affine transformations} or \emph{orthogonal affine group}, denoted $\E(n)$, is the set $\O(n)\times \mathbb{R}^n$ endowed with group operation
\[
(Q_1, c_1)(Q_2,c_2) = (Q_1Q_2, c_1 + Q_1c_2).
\]
In other words, it is a semidirect product: $\E(n) = \O(n) \ltimes_\vartheta \mathbb{R}^{n}$ where $\vartheta : \O(n) \to \Aut(\mathbb{R}^n) = \GL(n)$ as inclusion. The \emph{group of affine transformations} or \emph{general affine group}, denoted $\GA(n) $, is the set $\GL(n) \times \mathbb{R}^n$ endowed with group operation
\[
(X_1,c_1)  (X_2,c_2) = (X_1X_2, c_1 + X_1c_2).
\]
In other words, it is a semidirect product: $\GA(n) = \GL(n) \ltimes_\iota \mathbb{R}^n$ where $\iota : \GL(n) \to \Aut(\mathbb{R}^n) = \GL(n)$ is the identity map. $\GA(n)$ acts on $\mathbb{R}^n$ naturally via
\[
(X,c) \cdot v = Xv + c,\qquad (X,c) \in \GA(n),\; v\in \mathbb{R}^n.
\]
Clearly $\E(n)$  is a subgroup of $\GA(n)$ and therefore inherits this group action. We note that $\E(n)$ has wide-ranging applications in engineering \cite{CK}.

\section{Differential geometry of the affine Grassmannian}\label{sec:diff}

The affine Grassmannian has rich geometric properties. We start by showing that it is a  noncompact smooth manifold and then show that it is (i) homogeneous, (ii) reductive, and (iii) Riemmannian.
\begin{proposition}\label{prop:smooth}
$\Graff(k,n)$ is a noncompact smooth manifold with
\[
\dim \Graff(k,n) = (n-k)(k+1).
\]
\end{proposition}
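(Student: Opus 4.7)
The plan is to realize $\Graff(k,n)$ as the total space of a rank-$(n-k)$ smooth real vector bundle over the usual Grassmannian $\Gr(k,n)$, with the deaffine map $\tau$ of \eqref{eq:deaff} as bundle projection. Once that structure is in place, smoothness, the dimension formula, and noncompactness all follow at once: a vector bundle of rank $r$ over a smooth base of dimension $d$ is a smooth manifold of dimension $d+r$, and the total space is noncompact whenever the fibers are.

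First I would unpack the fiber $\tau^{-1}(\mathbb{A})$ for $\mathbb{A}\in\Gr(k,n)$. Each affine subspace $\mathbb{A}+b$ contains a unique representative in $\mathbb{A}^\perp$, namely the orthogonal projection of $b$ onto $\mathbb{A}^\perp$; this gives a natural bijection $\tau^{-1}(\mathbb{A}) \leftrightarrow \mathbb{A}^\perp \cong \mathbb{R}^{n-k}$. Globally this identifies $\Graff(k,n)$ with the orthogonal complement bundle of $\Gr(k,n)$, a smooth vector bundle of rank $n-k$ over a compact smooth base of dimension $k(n-k)$.

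To make this rigorous I would invoke the orthogonal affine coordinates introduced just above the proposition. Set
\[
M \coloneqq \{(A, b_0) \in \V(k,n) \times \mathbb{R}^n : A^\tp b_0 = 0\}.
\]
The defining map $(A,b_0)\mapsto A^\tp b_0$ is a submersion (its partial differential in $b_0$ is $A^\tp$, which is surjective because $A$ has orthonormal columns), so $M$ is a closed embedded submanifold of $\V(k,n)\times\mathbb{R}^n$. The smooth right $\O(k)$-action $(A,b_0)\cdot Q = (AQ, b_0)$ is free (it is free on the $\V(k,n)$ factor) and proper (since $\O(k)$ is compact), so the quotient manifold theorem produces a smooth structure on $M/\O(k)$, which is in natural bijection with $\Graff(k,n)$ via $[A,b_0]\mapsto \spn(A)+b_0$.

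A dimension count then gives
\[
\dim \Graff(k,n) = \dim M - \dim \O(k) = \Bigl(nk - \tfrac{k(k+1)}{2} + (n-k)\Bigr) - \tfrac{k(k-1)}{2} = (n-k)(k+1),
\]
and noncompactness is immediate since each fiber of $\tau$ is diffeomorphic to $\mathbb{R}^{n-k}$ with $n-k\ge 1$. The only mildly delicate points I expect are verifying that $M$ is a regular submanifold (a one-line submersion check) and that the $\O(k)$-action is free and proper; both are standard, but they are the steps I would be most careful to spell out explicitly.
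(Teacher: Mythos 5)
Your proof is correct, but it takes a genuinely different route from the paper's. The paper constructs explicit affine charts in the style of the standard Grassmannian atlas: it covers $\Graff(k,n)$ by sets $U$ of affine coordinates $[X,y_0]\in\mathbb{R}^{n\times(k+1)}$ with a prescribed nonvanishing $k\times k$ minor, normalizes to a unique reduced form $[\hat X,\hat y]$ to get homeomorphisms $U\to\mathbb{R}^{(n-k)(k+1)}$, checks smoothness of transitions, and proves noncompactness by exhibiting an explicit sequence $[A,mb]$ with no convergent subsequence. You instead build the smooth structure top-down via the quotient manifold theorem applied to $M=\{(A,b_0)\in\V(k,n)\times\mathbb{R}^n: A^\tp b_0=0\}$ modulo the free and proper $\O(k)$-action; your submersion check, the identification of $\O(k)$-orbits with affine subspaces (which matches the paper's uniqueness remark on orthogonal affine coordinates), and the dimension count $\dim M-\dim\O(k)=(n-k)(k+1)$ are all sound, and noncompactness follows since a fiber of $\tau$ is a closed subset homeomorphic to $\mathbb{R}^{n-k}$ with $n-k\ge1$. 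Your approach is more conceptual and essentially anticipates the paper's later results: the bundle structure you set up is Theorem~\ref{thm:bundle}, and the quotient description is close in spirit to Proposition~\ref{prop:homo}. What the paper's chart-based argument buys in exchange for the tedium is reusability: the same charts are recycled verbatim (replacing ``smooth'' by ``regular'') to establish the algebraic variety structure in Theorem~\ref{thm:alg}, and the paper's proof of Theorem~\ref{thm:bundle} explicitly leans on the coordinates introduced here, so in the paper's logical ordering the explicit atlas is doing double duty that your construction would have to supply separately.
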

\begin{proof}
Let $\mathbb{A}+b\in \Graff(k,n)$ be represented by affine coordinates
$[A,b_0] = [a_1, a_2,\dots a_k, b_0]  \in \mathbb{R}^{n\times (k+1)}$,
where $b_0 $ is chosen so that $ b- b_0\in \mathbb{A}$. Since $A$ has rank $k$, without loss of generality, we may assume that the $k\times k$ leading principal minor of $A$ is nonzero.
 
Let $U$ be the set of all  $\mathbb{X} + y \in \Graff(k,n)$ whose affine coordinates $[X, y_0]$ have nonzero $k \times k$ leading principal minors. Then $U$ is an open subset of $\Graff(k,n)$ containing  $\mathbb{A}+b$. Each $\mathbb{X}+y\in U$ has unique affine coordinates $[\hat{X},\hat{y}]  \in \mathbb{R}^{n\times (k+1)}$ of the form 
\[
[\hat{X},\hat{y}]=
\begin{bsmallmatrix}
1 & 0 &\dots& 0 & 0\\
0&1& \dots &0& 0\\
\vdots& \vdots &\ddots& \vdots & \vdots\\
0 & 0 & \dots &1 & 0\\
\hat{x}_{k+1,1} & \hat{x}_{k+1,2} & \dots & \hat{x}_{k+1,k} & \hat{y}_{k+1}\\
\vdots& \vdots &\ddots& \vdots & \vdots\\
\hat{x}_{n,1} & \hat{x}_{n,2} & \dots & \hat{x}_{n,k} & \hat{y}_{n}\\
\end{bsmallmatrix}.
\]
It is routine to verify that $\varphi: U\to \mathbb{R}^{(n-k)(k+1)}$, $\mathbb{X}+y \mapsto [\hat{X},\hat{y}]$,
is a homeomorphism and thus gives a local chart for $U$. We may likewise define other local charts by the nonvanishing of other $k\times k$ minors and verify that the transition functions $\varphi_1 \circ \varphi_2^{-1}$ are smooth for any two such local charts $\varphi_i : U_i \to \mathbb{R}^{(n-k)(k+1)}$, $i=1,2$. To see the noncompactness, take a sequence in $\Graff(k,n)$ represented in orthogonal affine coordinates by $[A, mb]$ with $m\in \mathbb{N}$, $A = [a_1,\dots, a_k] \in \V(k,n)$, and $0 \ne b \in \mathbb{R}^n$ such that $A^\tp b = 0$; observe that it has no convergent subsequence.
\end{proof}

The \emph{affine Stiefel manifold} is defined to be the product manifold $\Vaff(k,n) \coloneqq \V(k,n) \times \mathbb{R}^{n}$.  It is a homogeneous space because of the following analogue of \eqref{eq:stief},
\[
\Vaff(k,n) \cong \E(n)/\O(n-k)
\]
where $\E(n)$ is the orthogonal affine group $\E(n)$ introduce at the end of Section~\ref{sec:alg}. We have the following characterizations of $\Graff(k,n)$ as quotients of $\E(n)$.
\begin{proposition}\label{prop:homo}
$\Graff(k,n)$ is a reductive homogeneous Riemannian manifold.  In fact, we have the following analogue of \eqref{eq:grass},
\[
\Graff(k,n) \cong \Vaff(k,n)/\E(k) \cong \E(n)/ \bigl(\O(n-k)\times \E(k)\bigr).
\]
Furthermore, $\Vaff(k,n)$ is a principal $\E(k)$-bundle over $\Graff(k,n)$.
\end{proposition}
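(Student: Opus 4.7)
The plan is to exhibit all three structures (homogeneous, principal bundle, reductive Riemannian) by constructing a natural smooth transitive action of $\E(n)$ on $\Graff(k,n)$ and then identifying its isotropy. Define
\[
(Q,c) \cdot (\mathbb{A}+b) \coloneqq Q\mathbb{A} + Qb + c,
\]
and verify using the semidirect multiplication of $\E(n) = \O(n) \ltimes \mathbb{R}^n$ that this is a well-defined left action. Transitivity is immediate upon choosing the base point $\mathbb{A}_0 \coloneqq \spn(e_1,\dots,e_k)$: given any $\mathbb{A}+b$, extend an orthonormal basis of $\mathbb{A}$ to one of $\mathbb{R}^n$ to obtain $Q \in \O(n)$ with $Q\mathbb{A}_0 = \mathbb{A}$, then translate by $b$.

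Next I would compute $\Stab(\mathbb{A}_0)$: since $(Q,c)$ fixes $\mathbb{A}_0$ iff $Q$ preserves the linear subspace $\spn(e_1,\dots,e_k)$ and $c \in \spn(e_1,\dots,e_k)$, the stabilizer consists of elements $(Q_1 \oplus Q_2, (t,0))$ with $Q_1 \in \O(k)$, $Q_2 \in \O(n-k)$, $t \in \mathbb{R}^k$. A direct check in the semidirect multiplication shows that $(Q_2,(Q_1,t)) \mapsto (Q_1 \oplus Q_2, (t,0))$ is a group isomorphism $\O(n-k) \times \E(k) \xrightarrow{\sim} \Stab(\mathbb{A}_0)$, which combined with smoothness of the action and Proposition~\ref{prop:smooth} yields the second homogeneous-space presentation. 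The first presentation comes from decomposing the quotient in two stages: first quotienting $\E(n)$ by the $\O(n-k)$ factor of the isotropy gives $\Vaff(k,n)$ as noted above the proposition; then quotient $\Vaff(k,n)$ on the right by $\E(k)$ via $(A,y) \cdot (Q_1,t) = (AQ_1, y + At)$. A short computation shows this residual action is free (using that $A$ has full column rank), so that the standard quotient-manifold machinery for closed-subgroup pairs delivers the principal $\E(k)$-bundle $\Vaff(k,n) \to \Graff(k,n)$.

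For the reductive and Riemannian claims, the plan is to produce a vector-space decomposition $\mathfrak{e}(n) = \mathfrak{h} \oplus \mathfrak{m}$ where $\mathfrak{h} \cong \mathfrak{o}(n-k) \oplus \mathfrak{e}(k)$ is the isotropy Lie algebra and $\mathfrak{m}$ consists of off-diagonal skew blocks $\bigl(\begin{smallmatrix} 0 & -B^\tp \\ B & 0 \end{smallmatrix}\bigr)$ paired with translation vectors of the form $(0,s) \in \mathbb{R}^k \oplus \mathbb{R}^{n-k}$; a dimension count gives $\dim \mathfrak{m} = (n-k)(k+1)$, matching Proposition~\ref{prop:smooth}. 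The main technical obstacle is verifying $\Ad_H(\mathfrak{m}) \subseteq \mathfrak{m}$, which reduces to a block-matrix calculation using the explicit formula
\[
\Ad_{(Q,c)}(A,v) = (QAQ^\tp,\; Qv - QAQ^\tp c)
\]
for the Euclidean group; restricting to $(Q,c) = (Q_1 \oplus Q_2, (t,0)) \in H$ shows that $B \mapsto Q_2 B Q_1^\tp$ and $s \mapsto Q_2 s - Q_2 B Q_1^\tp t$, both preserving $\mathfrak{m}$. Once reductivity is in hand, the Frobenius inner product on the off-diagonal skew block together with the Euclidean inner product on $\mathbb{R}^{n-k}$ gives an $\Ad_H$-invariant inner product on $\mathfrak{m}$, which transports by left $\E(n)$-translation to a well-defined $\E(n)$-invariant Riemannian metric on $\Graff(k,n)$ by the standard construction for reductive homogeneous spaces.
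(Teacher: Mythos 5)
Your construction of the transitive $\E(n)$-action, the identification of the stabilizer with $\O(n-k)\times\E(k)$, the principal $\E(k)$-bundle structure on $\Vaff(k,n)$, and the reductive complement $\mathfrak{m}$ with $\operatorname{Ad}_H(\mathfrak{m})\subseteq\mathfrak{m}$ are all correct --- indeed more detailed than the paper's own proof, which asserts reductivity without exhibiting the decomposition. The gap is in your final step: the inner product $\langle(B,s),(B',s')\rangle = \tr(B^\tp B') + s^\tp s'$ on $\mathfrak{m}$ is \emph{not} $\operatorname{Ad}_H$-invariant. By your own formula, the element $(I_n,(t,0))\in H$ acts on $\mathfrak{m}$ by $(B,s)\mapsto (B,\,s-Bt)$; applied to $(B,0)$ with $Bt\ne 0$ this strictly increases the proposed norm. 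Worse, no $\operatorname{Ad}_H$-invariant inner product on $\mathfrak{m}$ can exist: the isotropy orbit $\{(B,-Bt) : t\in\mathbb{R}^k\}$ of $(B,0)$ is unbounded in $\mathfrak{m}$, whereas an invariant norm would have to be constant on it. Equivalently, the image of $H$ in $\GL(\mathfrak{m})$ contains a nontrivial unipotent one-parameter subgroup and is therefore not precompact, so $\Graff(k,n)$ admits \emph{no} $\E(n)$-invariant Riemannian metric whatsoever. (This is essentially the non-compactness obstruction the paper alludes to in Section~\ref{sec:metric} when explaining why $\Graff(k,n)$ may fail to be a geodesic orbit space.)

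The statement must therefore be read as two separate assertions --- $\Graff(k,n)$ is a reductive homogeneous space, \emph{and} it carries a Riemannian metric --- not as the claim that it carries an invariant one. The paper obtains the Riemannian structure by an entirely different and much softer route: by Theorem~\ref{thm:alg}, $\Graff(k,n)$ embeds as an open subset of $\Gr(k+1,n+1)$, and the standard metric on $\Gr(k+1,n+1)$ simply restricts to a Riemannian metric on $\Graff(k,n)$; no $\E(n)$-invariance is claimed or needed. If you replace the last paragraph's metric construction with this restriction argument while keeping your explicit reductive decomposition (which is a genuine and worthwhile supplement to the paper's terse proof), the argument is repaired.
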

\begin{proof}
Since $\Graff(k,n)$ can be identified with an open subset of $\Gr(k+1,n+1)$, the Riemannian metric $g_e$ on $\Gr(k+1,n+1)$ induces a metric on $\Graff(k,n)$. Equipped with this induced metric, $\Graff(k,n)$ is a Riemannian manifold. The group $\E(n)$ acts on $\Graff(k,n)$ by 
$(Q,c) \cdot (\mathbb{A}+b)=Q \cdot \mathbb{A}+Qb+c$,
where $(Q,c)\in \E(n)=\O(n)\times \mathbb{R}^n$, $\mathbb{A}+b \in \Graff(k,n)$, and $Q \cdot \mathbb{A} \coloneqq \spn(QA)$. It is easy to see that $\E(n)$ acts on $\Graff(k,n)$ transitively and so
$\Graff(k,n)\cong \E(n)/ \Stab_{\mathbb{A}+b}\bigl(\E(n)\bigr)$,
where $\Stab_{\mathbb{A}+b}\bigl(\E(n)\bigr)$ is the stabilizer of any fixed affine linear subspace $\mathbb{A}+b\in \Graff(k,n)$ in $\E(n)$. Now $\Stab_{\mathbb{A}+b}\bigl(\E(n)\bigr)$ consists of two types of actions. The first action is the affine action inside the plane $\mathbb{A}$, which is $\E(k)$, while the second action is the rotation around the orthogonal complement of $\mathbb{A}$, which is $\O(n-k)$. Hence we obtain
$\Stab_{\mathbb{A}+b}\bigl(\E(n)\bigr)\cong \O(n-k)\times \E(k)$,
and the representation of $\Graff(k,n)$ as a homogeneous Riemannian manifold follows. $\Vaff(k,n)$ is a principal $\E(k)$-bundle over $\Vaff(k,n)/ \E(k) \cong \Graff(k,n)$.
\end{proof}

Let $\tau_v :  \Vaff(k,n) =  \V(k,n) \times \mathbb{R}^{n} \to \V(k,n)$ be the projection. For any $k < n$, $\tau_v$ commutes with the deaffine map $\tau$ in \eqref{eq:deaff}:
\begin{equation}\label{eq:VG}
     \begin{tikzcd}
    \Vaff(k,n)   \arrow{r}{\tau_v}  \arrow{d}{\pi_{a}}  & \V(k,n)  \arrow{d}{\pi}   \\
     \Graff(k,n) \arrow{r}{\tau}  &  \Gr(k,n)  
     \end{tikzcd}
\end{equation}
where we view $\Graff(k,n)$, $\Gr(k,n)$, $\Vaff(k,n)$, $\V(k,n)$ as  homogeneous spaces. One may define $\V(k,\infty)$, the Stiefel manifold of orthogonal $k$-frames in $\mathbb{R}^\infty$, as the direct limit of  the inclusions $\iota_n: \V(k,n) \to \V(k,n+1)$, $Q \mapsto \begin{bsmallmatrix}Q \\ 0 \end{bsmallmatrix}$, 
and its affine counterpart as $\Vaff(k,\infty) \coloneqq \V(k,\infty) \times \mathbb{R}^\infty$, the \emph{infinite affine Stiefel manifold}. Taking direct limit of \eqref{eq:VG}, we obtain
\begin{equation}\label{diagram:infinite Stiefel}
     \begin{tikzcd}
    \Vaff(k,\infty)  \arrow{r}{\tau_v}  \arrow{d}{\pi_{a}}  & \V(k,\infty)   \arrow{d}{\pi}   \\
     \Graff(k,\infty) \arrow{r}{\tau}  &  \Gr(k,\infty)  
     \end{tikzcd}
\end{equation}
The objects in \eqref{diagram:infinite Stiefel} are all Hilbert manifolds although we will not use this fact.

From a computational perspective, one would prefer to work with orthogonal objects like $\V(k,n)$ and $\O(k)$ rather than affine objects like $\Vaff(k,n)$ and $\E(k)$. Roughly speaking, this is largely because orthogonal transformations preserve norm and do not magnify rounding errors during computations. With this in mind, we will seek to characterize the  affine Grassmannian as an orbit space of the orthogonal group in a Stiefel manifold.

Let $\mathbb{A} + b \in\Graff(k,n)$. Its orthogonal affine coordinates are $[A,b_0] \in \V(k,n) \times \mathbb{R}^n$ where $A^\tp  b_0 = 0$, i.e., $b_0$ is orthogonal to the columns of $A$. However as $b_0$ is in general not of unit norm, we may not regard $[A,b_0]$ as an element of $\V(k+1,n)$. The following variant\footnote{Definition~\ref{def:stief} has appeared in \cite[Definition~3.1]{WYL}. We reproduce it here for the reader's easy reference.} is a convenient system of coordinates for computations \cite{WYL} and for defining various distances on $\Graff(k,n)$ in Section~\ref{sec:metric}.
\begin{definition}\label{def:stief}
Let $\mathbb{A} + b \in \Graff(k,n)$  and $[A,b_0] \in \mathbb{R}^{n \times (k+1)}$ be its orthogonal affine coordinates, i.e., $A^\tp  A = I$ and $A^\tp  b_0 =0$. The matrix of \emph{Stiefel coordinates} for $\mathbb{A} + b$ is the $(n+1) \times (k+1)$ matrix with orthonormal columns,
\[
Y_{\mathbb{A} + b} \coloneqq
\begin{bmatrix}
A& b_0/\sqrt{1+\lVert b_0\rVert^2}\\
0& 1/\sqrt{1+\lVert b_0\rVert^2}
\end{bmatrix} \in \V(k+1, n +1).
\]
\end{definition}
Two orthogonal affine coordinates $[A,b_0], [A',b_0']$ of $\mathbb{A}+b$ give two corresponding matrices of Stiefel coordinates $Y_{\mathbb{A} + b} $, $Y_{\mathbb{A} + b}'$. By the remark after our definition of orthogonal affine coordinates, $A = A'Q'$ for some $Q' \in \O(k)$ and $b_0 = b_0'$. Hence
\begin{equation}\label{eq:otsc}
Y_{\mathbb{A} + b} =
\begin{bmatrix}
A& b_0/\sqrt{1+\lVert b_0\rVert^2}\\
0& 1/\sqrt{1+\lVert b_0\rVert^2}
\end{bmatrix}= 
\begin{bmatrix}
A' & b_0'/\sqrt{1+\lVert b_0'\rVert^2}\\
0& 1/\sqrt{1+\lVert b_0'\rVert^2}
\end{bmatrix}
\begin{bmatrix}
Q' & 0 \\
0 & 1
\end{bmatrix} =
Y_{\mathbb{A} + b}' Q
\end{equation}
where $Q \coloneqq \begin{bsmallmatrix} Q' & 0 \\ 0 &1\end{bsmallmatrix}\in \O(k+1)$. Hence two different matrices of Stiefel coordinates for the same affine subspace differ by an orthogonal transformation.

There is also an affine counterpart to the last paragraph of Section~\ref{sec:basic} that allows us to provide an analogue of Proposition~\ref{prop:homo} without reference to orthogonality, useful for studying the affine Grassmannian over a vector space without an inner product.
The \emph{noncompact affine Stiefel manifold} $\Staff(k,n)$ may be defined in several ways:
\[
\Staff(k,n) = \GA(n)/\GL(n-k) = \bigl(\GL(n) /\GL(n-k) \bigr) \times \mathbb{R}^n = \St(k,n) \times \mathbb{R}^n,
\]
where $\GA(n)$ is the general affine group in Section~\ref{sec:alg} and $\St(k,n)$ the noncompact Stiefel manifold in Section~\ref{sec:basic}.
\begin{proposition}
\begin{enumerate}[\upshape (i)]
\item\label{it:affdim} Dimensions of the compact and noncompact affine Stiefel manifolds are
\[
\dim \Vaff(k,n) = \frac{1}{2}k( 2n - k + 1), \qquad \dim \Staff(k,n) = n (k+1).
\] 
\item\label{it:affiso} Whether as topological spaces, differential manifolds, or algebraic varieties, we have
\[
\Graff(k,n) \cong \Staff(k,n)/\GA(k) \cong \GA(n)/\bigl( \GL(n-k) \times  \GA(k) \bigr),
\]
i.e., the isomorphism is a homeomorphism, diffeomorphism, and biregular map.
\item\label{it:affbun} $ \Staff(k,n)$ is a principal $\GA(k)$-bundle over $\Graff(k,n)$.
\end{enumerate}
\end{proposition}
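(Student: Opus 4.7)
The proposition is the non-compact, non-orthogonal analogue of Proposition~\ref{prop:homo}, so my plan is to mirror that proof throughout, with $\GA(n)$, $\GL$, and $\St$ replacing $\E(n)$, $\O$, and $\V$ at each step.

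For (i), since $\Vaff(k,n) = \V(k,n) \times \mathbb{R}^n$ and $\Staff(k,n) = \St(k,n) \times \mathbb{R}^n$ are product manifolds, the dimensions follow directly by adding $n$ to the standard counts $\dim \V(k,n) = nk - k(k+1)/2$ and $\dim \St(k,n) = nk$.

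For (ii), I would define the action of $\GA(n)$ on $\Graff(k,n)$ by
\[
(X,c) \cdot (\mathbb{A}+b) = X\cdot\mathbb{A} + Xb + c, \qquad (X,c) \in \GA(n),
\]
where $X\cdot\mathbb{A} \coloneqq \spn(XA)$ for any basis $A$ of $\mathbb{A}$. Transitivity is clear since $\GL(n)$ already acts transitively on $\Gr(k,n)$ while the $\mathbb{R}^n$ factor adjusts the displacement. The stabilizer of the standard flat $\mathbb{A}_0 = \spn(e_1,\dots,e_k)$ decomposes exactly as in the proof of Proposition~\ref{prop:homo}: one piece acts affinely within $\mathbb{A}_0$ (contributing $\GA(k)$) and one piece acts linearly on a complement of $\mathbb{A}_0$ in $\mathbb{R}^n$ (contributing $\GL(n-k)$). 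Under the natural embeddings these two subgroups commute in $\GA(n)$, giving $\Graff(k,n) \cong \GA(n)/(\GL(n-k) \times \GA(k))$. Since $\Staff(k,n) = \GA(n)/\GL(n-k)$ by the definition of $\Staff(k,n)$, quotienting successively then yields the first isomorphism $\Graff(k,n) \cong \Staff(k,n)/\GA(k)$.

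For (iii), the right action of $\GA(k)$ on $\Staff(k,n) = \St(k,n) \times \mathbb{R}^n$ is
\[
(A,b) \cdot (X,c) = (AX,\; b + Ac), \qquad (X,c) \in \GA(k),
\]
which is just the change-of-basis-and-origin on the parameterization $\{As + b : s \in \mathbb{R}^k\}$ of $\spn(A) + b$. Freeness is immediate from the full column rank of $A$ (so $AX=A$ forces $X=I$, and $Ac=0$ forces $c=0$); properness is standard; and the orbits coincide with fibers of the coordinate map $(A,b) \mapsto \spn(A) + b$. Local trivializations can be built from those of the $\GL(k)$-bundle $\St(k,n) \to \Gr(k,n)$ patched with the trivial $\mathbb{R}^k$ factor, completing the principal bundle structure.

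\textbf{Main obstacle.} The delicate point is the algebraic-variety portion of (ii): $\GA(n)$ is non-reductive, containing the translation subgroup $\mathbb{R}^n$ as its unipotent radical, so standard GIT quotients do not apply directly. I would sidestep this by exploiting freeness and properness of the actions, invoking the general principle that a free proper algebraic action of a linear algebraic group on a variety admits a geometric quotient, and then identifying that quotient with the variety structure on $\Graff(k,n)$ inherited from its embedding as a Zariski open subset of $\Gr(k+1,n+1)$ (established earlier in the algebraic geometry section).
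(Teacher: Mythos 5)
Your route for part (ii) is genuinely different from the paper's, and it is where the gap lies. The paper never computes a stabilizer: it uses the inclusion $\E(n)\hookrightarrow\GA(n)$ to set up a commutative diagram relating $\Vaff(k,n)/\E(k)$ to $\Staff(k,n)/\GA(k)$, and reduces (ii) to checking that the map $j$ restricts to an isomorphism $\tau^{-1}(\mathbb{A})\to\tau_s^{-1}(\mathbb{A})$ on each fiber over $\Gr(k,n)$, which follows from the already-known identification \eqref{eq:ncgrass}. Your plan instead mirrors the orbit--stabilizer argument of Proposition~\ref{prop:homo}, and this is exactly where the compact and noncompact settings diverge: the stabilizer of $\mathbb{A}_0=\spn(e_1,\dots,e_k)$ in $\GA(n)$ is \emph{not} the direct product $\GL(n-k)\times\GA(k)$. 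An element $(X,c)$ fixing $\mathbb{A}_0$ need only have $X$ block \emph{upper triangular} with respect to $\mathbb{A}_0\oplus\spn(e_{k+1},\dots,e_n)$, so the stabilizer is a parabolic-type subgroup containing an extra unipotent block isomorphic to $\mathbb{R}^{k\times(n-k)}$; in the orthogonal case this block is killed because an orthogonal map preserving $\mathbb{A}_0$ must preserve $\mathbb{A}_0^{\perp}$, but for $\GL$ it survives. A dimension count exposes the problem: $\dim\GA(n)-\dim\bigl(\GL(n-k)\times\GA(k)\bigr)=(n^2+n)-\bigl((n-k)^2+k^2+k\bigr)$ exceeds $\dim\Graff(k,n)=(n-k)(k+1)$ by exactly $k(n-k)$. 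So your assertion that the two subgroups sit inside $\GA(n)$ as a commuting pair whose product is the full stabilizer is false as stated; the quotient description has to be read with the same (parabolic) convention as \eqref{eq:ncgrass}, and your "mirror Proposition~\ref{prop:homo}" strategy does not supply that. Your fallback for the algebraic-variety structure (free proper actions and geometric quotients for the non-reductive group $\GA(n)$) is also heavier machinery than needed; the paper's fiberwise reduction to \eqref{eq:ncgrass} avoids both issues.

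Two smaller points. First, in (i) your computation gives $\dim\Vaff(k,n)=nk-\tfrac{1}{2}k(k+1)+n=\tfrac{1}{2}(k+1)(2n-k)$, which does \emph{not} equal the stated $\tfrac{1}{2}k(2n-k+1)$ (the two differ by $n-k$); the paper's own method, $\dim\E(n)-\dim\O(n-k)$, yields the same value as yours, so the displayed formula in the statement appears to be a typo --- but you should have noticed that your count does not reproduce the formula you were asked to prove. Second, your treatment of (iii) (explicit free right action $(A,b)\cdot(X,c)=(AX,\,b+Ac)$ with orbits equal to fibers of $(A,b)\mapsto\spn(A)+b$) is correct and more detailed than the paper's one-line deduction of (iii) from (ii).
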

\begin{proof}
 The inclusion $\E(n)\hookrightarrow \GA(n)$ as a subgroup naturally induces the  commutative diagram:
\begin{equation}\label{diag:noncpt Stiefel}
     \begin{tikzcd}
  \E(n) \arrow[hookrightarrow]{r}    \arrow{d}{} & \GA(n)  \arrow{d}{}\  \\
  \Vaff(k,n) = \E(n)/\O(n-k) \arrow[hookrightarrow]{r}\arrow{d}{\pi_a} &  \Staff(k,n)\arrow{d}{\pi_s} = \GA(n)/\GL(n-k) \\
  \Graff(k,n) = \Vaff(k,n)/\E(k) \arrow[hookrightarrow]{r}{j} \arrow{d}{\tau} & \Staff(k,n)/ \GA(k) \arrow{d}{\tau_s} \\
\Gr(k,n)  \ \arrow{r}{\cong} & \GL(n)/\bigl(\GL(n-k) \times \GL(k) \bigr)
     \end{tikzcd}
\end{equation}
where $\pi_a$ and $\tau$ are as in \eqref{eq:VG}, $\pi_s$ is the quotient map, and $\tau_s$ is similarly defined as  $\tau$. The bottom isomorphism is \eqref{eq:ncgrass}, which is simultaneously an isomorphism of topological spaces, differential manifolds, and algebraic varieties. 
\eqref{it:affdim} follows from the quotient space structures:
$\dim \Vaff(k,n) = \dim \E(n) - \dim \O(n-k)$ and $\dim \Staff(k,n) = \dim \GA(n) - \dim \GL(n-k)$.
For \eqref{it:affiso}, it suffices to show that the restriction
\[
j|_{\tau^{-1}(\mathbb{A})}:\tau^{-1}(\mathbb{A}) \to \tau_s^{-1}(\mathbb{A})
\]
is an isomorphism for every $\mathbb{A} \in \Gr(k,n)$, but this follows from the bottom isomorphism. \eqref{it:affbun} follows from \eqref{it:affiso} as $\Staff(k,n)$ is a principal $\GA(k)$-bundle on $\Staff(k,n)/\GA(k) \cong \Graff(k,n)$.
\end{proof}

\section{Algebraic geometry of the affine Grassmannian}\label{sec:AG}

We now turn to the algebraic geometric aspects, characterizing $\Graff(k,n)$ as (i) an irreducible nonsingular algebraic variety, (ii) an Zariski open dense subset of $\Gr(k+1,n+1)$, and (iii) a real affine algebraic variety of projection matrices. In addition, just as  $\Gr(k,n)$ is a moduli space of $k$-dimensional linear subspaces in $\mathbb{R}^n$, $\Graff(k,n)$ is a moduli space of $k$-dimensional  affine subspaces in $\mathbb{R}^n$, although we have nothing to add beyond this observation. In Section~\ref{sec:schubert}, we will discuss affine Schubert varieties, an analogue of Schubert varieties, in $\Graff(k,n)$.

That $\Graff(k,n)$ may be regarded as a Zariski dense subset of  $\Gr(k+1,n+1)$ is a noteworthy point. It is the key to our optimization algorithms in \cite{WYL}. Also, it immediately implies that any probability densities \cite{CY} defined on the usual Grassmannian may be adapted to the affine Grassmannian, a fact that we will rely on in Section~\ref{sec:prob}. 
\begin{theorem}\label{thm:alg}
\begin{enumerate}[\upshape (i)]
\item $\Graff(k,n)$ is an algebraic variety that is irreducible and nonsingular.
\item $\Graff(k,n)$ may be embedded as a Zariski open subset of $\Gr(k+1,n+1)$,
\begin{equation}\label{eq:j}
j : \Graff(k,n) \to \Gr(k+1,n+1), \quad \mathbb{A}+b \mapsto \spn(\mathbb{A}\cup \{ b +e_{n+1} \} ),
\end{equation}
where $e_{n+1} = (0,\dots,0,1)^\tp \in \mathbb{R}^{n+1}$. The image is open and dense in both the Zariski and manifold topologies.

\item $\Gr(k+1,n+1)$ may be regarded as the disjoint union of $\Gr(k+1,n)$ and $\Graff(k,n)$; more precisely,
\[
\Gr(k+1,n+1) =  X \cup X^c, \quad X \cong \Graff(k,n), \quad X^c \cong \Gr(k+1,n).
\]
\end{enumerate}
\end{theorem}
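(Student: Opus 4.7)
My plan is to prove part (ii) first, deduce (i) as a corollary, and then extract (iii) from the analysis of the image of $j$.

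For part (ii), the first task is to check that $j$ is well-defined and injective. Given $\mathbb{A}+b\in \Graff(k,n)$ with basis matrix $A\in \mathbb{R}^{n\times k}$, I would represent $j(\mathbb{A}+b)$ concretely as the column span of the $(n+1)\times(k+1)$ matrix
\[
M_{\mathbb{A}+b} \coloneqq \begin{bmatrix} A & b \\ 0 & 1\end{bmatrix}.
\]
Under a change of basis $A \mapsto AX$ for $X\in \GL(k)$ and a change of displacement $b\mapsto b + Av$ for $v\in \mathbb{R}^k$, the matrix $M_{\mathbb{A}+b}$ is right-multiplied by the invertible matrix $\begin{bsmallmatrix} X & v \\ 0 & 1\end{bsmallmatrix}$, so its column span is unchanged; this shows well-definedness. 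For injectivity, I would recover $\mathbb{A}$ from $j(\mathbb{A}+b)$ as the intersection with the hyperplane $\mathbb{R}^n \times \{0\}$ (which has dimension exactly $k$ since $e_{n+1}\notin j(\mathbb{A}+b)$ only when $b\in \mathbb{A}$, but in any case the intersection has dimension $\geq k$ and equals $\mathbb{A}$ by a dimension count), and then recover the coset $\mathbb{A}+b$ from the unique vector in $j(\mathbb{A}+b)$ whose last coordinate equals $1$.

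Next I would characterize the image as
\[
X = \{\mathbb{W}\in \Gr(k+1,n+1) : \mathbb{W} \not\subseteq \mathbb{R}^n\times \{0\}\},
\]
i.e., those $(k+1)$-planes $\mathbb{W}$ on which the linear functional $e_{n+1}^\tp : \mathbb{R}^{n+1}\to \mathbb{R}$ is not identically zero. The containment $\mathbb{W}\subseteq \mathbb{R}^n\times\{0\}$ is a closed condition: in Plücker coordinates $p_I(\mathbb{W})$ indexed by $(k+1)$-subsets $I\subseteq \{1,\dots,n+1\}$, it is exactly $\{p_I = 0 : n+1\in I\}$. Hence $X$ is Zariski open, and because $\Gr(k+1,n+1)$ is irreducible, $X$ is Zariski (and hence manifold-topology) dense. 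To see that the image of $j$ really is $X$, I would observe that any $\mathbb{W}\in X$ contains some vector $w$ with last coordinate $1$; writing $w = \begin{bsmallmatrix} b \\ 1\end{bsmallmatrix}$ and choosing a basis for $\mathbb{W}\cap (\mathbb{R}^n\times\{0\})$ to form the columns of $A$ yields $\mathbb{W} = j(\spn(A) + b)$.

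Part (i) then follows immediately: a nonempty Zariski open subset of the irreducible nonsingular projective variety $\Gr(k+1,n+1)$ is itself an irreducible nonsingular algebraic variety, and the bijection with $\Graff(k,n)$ equips the latter with this structure (compatibly with the manifold structure of Proposition~\ref{prop:smooth}). Part (iii) is essentially a restatement of the image characterization: setting $X^c \coloneqq \Gr(k+1,n+1)\setminus X$, the map $\mathbb{W}\mapsto \mathbb{W}$, viewed as a subspace of $\mathbb{R}^n\cong \mathbb{R}^n\times\{0\}$, gives a natural identification $X^c \cong \Gr(k+1,n)$, and the decomposition $\Gr(k+1,n+1) = X \sqcup X^c$ is disjoint by construction. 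The main technical point requiring care is confirming that the Plücker description of $X^c$ actually cuts out a closed subvariety isomorphic to $\Gr(k+1,n)$ rather than a mere set-theoretic copy; this is standard (it is the smallest Schubert variety containing $\Gr(k+1,n)$ in its usual embedding), so I expect no real obstacle—only bookkeeping to align the two viewpoints.
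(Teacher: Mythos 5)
Your proposal is correct, but it reaches part (i) by a genuinely different route from the paper. The paper proves nonsingularity of $\Graff(k,n)$ directly, by rerunning the chart construction of Proposition~\ref{prop:smooth} in the algebraic category, and gets irreducibility from the vector-bundle structure $\tau:\Graff(k,n)\to\Gr(k,n)$ of Theorem~\ref{thm:bundle} (irreducible base, irreducible equidimensional fibers) --- a forward reference to a later section. You instead prove (ii) first and deduce (i) as a corollary: a nonempty Zariski open subset of the irreducible nonsingular variety $\Gr(k+1,n+1)$ is itself irreducible and nonsingular. This is cleaner, avoids the forward reference, and additionally endows $\Graff(k,n)$ with a quasi-projective structure rather than merely an abstract-variety structure. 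You also supply details the paper compresses into ``clearly $j$ is an embedding'': the well-definedness under $M_{\mathbb{A}+b}\mapsto M_{\mathbb{A}+b}\begin{bsmallmatrix} X & v\\ 0 & 1\end{bsmallmatrix}$, the explicit inverse on the image, the surjectivity onto $X=\{\mathbb{W}: \mathbb{W}\not\subseteq\mathbb{R}^n\times\{0\}\}$, and the Pl\"ucker-coordinate verification that $X^c$ is closed (the paper simply asserts this is ``clearly Zariski closed''). Two small wording issues, neither a real gap: the intersection $j(\mathbb{A}+b)\cap(\mathbb{R}^n\times\{0\})$ is always exactly $\mathbb{A}$ (a general element $a+\lambda(b+e_{n+1})$ has last coordinate $\lambda$, so the intersection is $\{a:a\in\mathbb{A}\}$), making your parenthetical dimension discussion unnecessary; and the set of vectors in $j(\mathbb{A}+b)$ with last coordinate $1$ is not a single vector but the coset $\mathbb{A}+b+e_{n+1}$ --- the recovery still works, since projecting that coset onto the first $n$ coordinates gives precisely $\mathbb{A}+b$.
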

\begin{proof}
Substituting `smooth' with `regular' and `differential manifold' by `algebraic variety' in  the proof of Proposition~\ref{prop:smooth}, we see that $\Graff(k,n)$ is a nonsingular algebraic variety. Its irreducibility follows from  Theorem~\ref{thm:bundle} since $\Gr(k,n)$ is irreducible and all fibers of $\Graff(k,n)\to \Gr(k,n)$ are irreducible and of the same dimension. We use `algebraic variety' is used here in the sense of an abstract algebraic variety, i.e., $\Graff(k,n)$ is obtained by gluing together affine open subsets. 

The embedding $j$ takes $k$-flats in $\mathbb{R}^n$ to $(k+1)$-planes in $\mathbb{R}^{n+1}$, i.e.,
$\mathbb{R}^n \supseteq \mathbb{A}+b \mapsto \spn(\mathbb{A}\cup \{ b +e_{n+1} \} )\subseteq\mathbb{R}^{n+1}$.
It maps $\mathbb{R}^n$ onto $E_n \coloneqq \spn\{e_1,\dots,e_n\} \subseteq \mathbb{R}^{n+1}$ where $e_1,\dots,e_n, e_{n+1}$ are the standard basis vectors of $\mathbb{R}^{n+1}$. Linear subspaces $\mathbb{A} \subseteq \mathbb{R}^n$ are then mapped to $j(\mathbb{A}) \subseteq E_n$. Clearly $j$ is an  embedding. We  illustrate  this embedding with the case $k = 1$, $n = 3$  in Figure~\ref{fig:plane}.
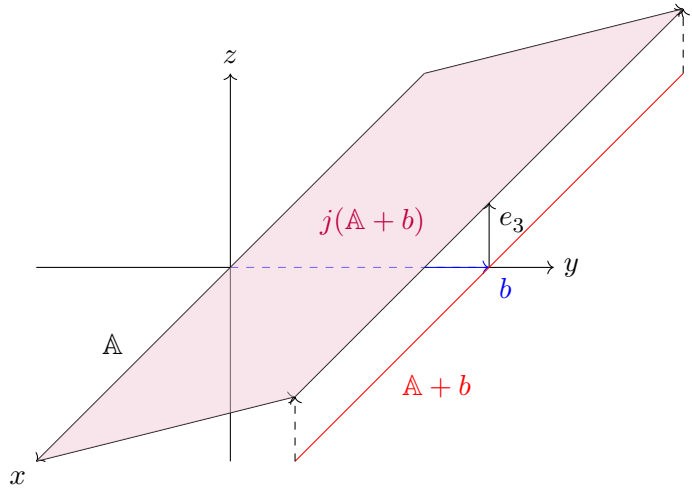
\begin{figure}[!ht]
\centering
\begin{tikzpicture}[scale=0.86]
\draw[black] (-3,0)--(0,0);
\draw[dashed,blue] (0,0)--(4,0);
\draw[->] (4,0) -- (5,0)node[anchor=west] {$y$};
\draw[->] (0,-3) -- (0,3)node[anchor=south] {$z$};
\node [above left,black] at (-1.5,-1.5) {$\mathbb{A}$}; 
\draw[black][->] (3,3) -- (-3,-3);
\node [below left,black] at (-3,-3) {$x$};
\draw[blue][->] (3,0)--(4,0);
\node [below right,blue] at (4,0) {$b$}; 
\draw[black][->]  (4,0)-- (4,1);
\node [below right,black] at (4,1) {$e_3$}; 
\draw[red] (1,-3) -- (7,3);
\node [below right,red] at (2.5,-1.5) {$\mathbb{A}+b$}; 
\draw[dashed][->]  (1,-3)-- (1,-2);
\draw[dashed][->]  (7,3)-- (7,4);
\draw[black][->] (1,-2) -- (7,4);
\draw[black][->] (-3,-3)--(1,-2);
\draw[black][->] (3,3)-- (7,4);
\fill[purple!20,opacity=0.5] (-3,-3) -- (1,-2) -- (7,4) -- (3,3) -- cycle;
\node[purple] at (2.2,0.7) {$j(\mathbb{A}+b)$};
\end{tikzpicture} 
\caption{Here our linear subspace $\mathbb{A}$ is the $x$-axis. It is displaced by $b$ along the $y$-axis to the affine subspace $\mathbb{A}+b$. The embedding  $j : \Graff(k,n) \to \Gr(k+1,n+1)$ takes $\mathbb{A} + b$ to the smallest $2$-plane containing $\mathbb{A}$ and $b+e_3$, where $e_3$ is a unit vector along the $z$-axis.}
\label{fig:plane}
\end{figure}


We set $X \coloneqq j\bigl(\Graff(k,n)\bigr) \subseteq \Gr(k+1,n+1)$ and set $X^c$ to be the set-theoretic complement of $X$ in $\Gr(k+1,n+1)$. By (ii), $X \cong \Graff(k,n)$. By the definition of $X^c$, a $(k+1)$-plane $\mathbb{B}\in \Gr(k+1,n+1)$ is in $X^c$ if and only if $\mathbb{B}\subseteq E_n$, which is to say that
$X^c = \Gr_{k+1}(E_n) \cong \Gr(k+1,n)$. 
Lastly we see that $X$ is Zariski open because its complement $X^c$, comprising $(k+1)$-planes in $E_n$, is clearly Zariski closed.
\end{proof}

Henceforth we will identify
\begin{equation}\label{eq:Rn}
\mathbb{R}^n \equiv \{ (x_1,\dots,x_n,0)^\tp  \in \mathbb{R}^{n+1} : x_1,\dots,x_n \in \mathbb{R}\}
\end{equation}
to obtain a complete flag
\[
\{0\} \subseteq \mathbb{R}^1 \subseteq \mathbb{R}^2 \subseteq \dots \subseteq \mathbb{R}^n \subseteq \mathbb{R}^{n+1} \subseteq \cdots,
\]
which was essentially what we did in the proof of Theorem~\ref{thm:alg}. With such an identification, our choice of $e_{n+1}$ in the embedding $j$ in \eqref{eq:j} is the most natural one.

It is often desirable to uniquely represent elements of $\Graff(k,n)$ as actual matrices instead of equivalence classes of matrices like the affine, orthogonal affine, and Stiefel coordinate representations in Sections~\ref{sec:alg} and \ref{sec:diff}. For example, we will see that this is the case when we discuss probability distributions on  $\Graff(k,n)$ in Section~\ref{sec:prob}. The Grassmannian has a well-known representation  \cite[Example~1.2.20]{Nicolaescu}  as the set of rank-$k$ orthogonal projection\footnote{A projection matrix satisfies $P^2 = P$ and an orthogonal projection matrix is in addition symmetric, i.e., $P^\tp  = P$. Despite its name, an orthogonal projection matrix $P$ is not an orthogonal matrix unless $P=I$.} matrices, or, equivalently, the set of trace-$k$ idempotent symmetric matrices:
\begin{equation}\label{eq:grproj}
\Gr(k,n) \cong \{P \in \mathbb{R}^{n \times n}: P^\tp  = P^{2} = P, \; \tr(P) = k\}.
\end{equation}
Note that $\rank(P) = \tr(P)$ for an orthogonal projection matrix $P$.
A straightforward affine analogue of \eqref{eq:grproj} for $\Graff(k,n)$ is the following.
\begin{proposition}\label{prop:rav}
$\Graff(k,n)$ is a real affine algebraic variety given by
\begin{equation}\label{eq:graffmatrix}
\Graff(k,n) \cong \{[P , b] \in \mathbb{R}^{n \times (n+1)}:  P^\tp =P^2=P, \; \tr(P)=k,\; Pb = 0\}.
\end{equation}
\end{proposition}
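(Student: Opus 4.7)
The plan is to produce an explicit bijection $\varphi: \Graff(k,n) \to V$, where $V$ denotes the right-hand side of \eqref{eq:graffmatrix}, and then to note that $V$ is cut out of $\mathbb{R}^{n \times (n+1)}$ by polynomial equations. I would define
\[
\varphi(\mathbb{A}+b) \coloneqq [P_{\mathbb{A}}, b_{0}], \qquad b_{0} \coloneqq (I - P_{\mathbb{A}})b,
\]
where $P_{\mathbb{A}}$ is the orthogonal projection matrix onto $\mathbb{A}$. Concretely, if $A \in \V(k,n)$ is an orthonormal basis of $\mathbb{A}$, then $P_{\mathbb{A}} = AA^{\tp}$ and $b_{0}$ is exactly the orthogonal displacement vector from Section~\ref{sec:alg}.

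First I would verify well-definedness. The projection $P_{\mathbb{A}}$ depends only on $\mathbb{A}$, and if $b' = b+a$ with $a \in \mathbb{A}$ is another representative of the same coset, then $(I-P_{\mathbb{A}})b' = (I-P_{\mathbb{A}})b$, so $b_{0}$ is independent of the chosen representative. The matrix $P_{\mathbb{A}}$ is symmetric, idempotent, and has trace $\rank(P_{\mathbb{A}}) = \dim(\mathbb{A}) = k$, by \eqref{eq:grproj}. Moreover $P_{\mathbb{A}} b_{0} = P_{\mathbb{A}}(I - P_{\mathbb{A}})b = (P_{\mathbb{A}} - P_{\mathbb{A}}^{2})b = 0$. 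Hence $\varphi(\mathbb{A}+b) \in V$.

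Next I would establish bijectivity by writing down the inverse. Given $[P,b] \in V$, the representation \eqref{eq:grproj} identifies $P$ with a unique $\mathbb{A} \in \Gr(k,n)$, namely $\mathbb{A} = \im(P)$, and the constraint $Pb = 0$ means precisely that $b \in \mathbb{A}^{\perp}$. Setting $\psi([P,b]) \coloneqq \mathbb{A} + b \in \Graff(k,n)$ yields a well-defined map. The composition $\varphi \circ \psi$ is the identity on $V$ because $b \in \mathbb{A}^{\perp}$ already forces $b_{0} = b$; the composition $\psi \circ \varphi$ is the identity on $\Graff(k,n)$ because $\mathbb{A} + b = \mathbb{A} + b_{0}$ by construction. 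Injectivity of $\varphi$ thus follows from the uniqueness of the orthogonal displacement noted after the definition of orthogonal affine coordinates in Section~\ref{sec:alg}.

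Finally, to see that $V$ is a real affine algebraic variety, observe that its defining conditions, viewed as constraints on the $n(n+1)$ entries of $[P,b]$, are all polynomial: $P^{\tp} = P$ and $\tr(P) = k$ are linear; $P^{2} = P$ is quadratic in the entries of $P$; and $Pb = 0$ is bilinear in the entries of $P$ and $b$. Thus $V \subseteq \mathbb{R}^{n \times (n+1)}$ is Zariski-closed, and \eqref{eq:graffmatrix} is proved. I do not anticipate a serious obstacle here; the only delicate point is the canonical choice of displacement $b_{0}$ in $\mathbb{A}^{\perp}$, which is exactly what forces $Pb = 0$ and makes the representation matrix-valued rather than a coset.
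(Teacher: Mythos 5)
Your proposal is correct and follows essentially the same route as the paper: the paper's proof also sends $\mathbb{A}+b$ to $[AA^{\tp}, b_0]$ in orthogonal affine coordinates and asserts (without the details you supply) that this map is well defined and bijective. You have simply made explicit the inverse $[P,b] \mapsto \im(P) + b$ and the verification that $V$ is cut out by polynomial equations, which the paper leaves to the reader.
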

\begin{proof}
Let $\mathbb{A}+b \in \Graff(k,n)$ have  orthogonal affine coordinates $[A,b_0] \in \mathbb{R}^{n \times (k+1)}$. Recall that if $A$ is an orthonormal basis for the subspace $\mathbb{A}$, then $AA^\tp $ is the orthogonal projection onto $\mathbb{A}$. It is straightforward to check that the map $\mathbb{A}+b \mapsto [AA^\tp , b_0]$ is independent of the choice of orthogonal affine coordinates and is bijective.
\end{proof}

We will call  the matrix $[P , b] \in \mathbb{R}^{n \times (n+1)}$ \emph{projection affine coordinates} for $\mathbb{A} + b$. From a practical standpoint, we would like to represent points in $\Graff(k,n)$ as orthogonal projection matrices; one reason is that such a coordinate system facilitates optimization algorithms on $\Graff(k,n)$  (see \cite{WYL}), another is that certain probability densities can be naturally expressed in such a coordinate system (see Section~\ref{sec:prob}). Since $[P , b]$ is not an orthogonal projection matrix, we introduce the following variant.\footnote{Definition~\ref{def:proj} has appeared in \cite[Definition~3.4]{WYL}. We reproduce it here for the reader's easy reference.}
\begin{definition}\label{def:proj}
Let $\mathbb{A} + b \in \Graff(k,n)$  and $[P , b] \in \mathbb{R}^{n \times (n+1)}$  be its projection affine coordinates. The matrix of \emph{projection coordinates} for $\mathbb{A} + b$ is the  orthogonal projection matrix
\[
P_{\mathbb{A} + b} \coloneqq \begin{bmatrix} P+bb^\tp /(\lVert b \rVert^{2}+1)& b/(\lVert b\rVert^{2}+1) \\ b^\tp /(\lVert b\rvert^{2}+1)& 1/(\lVert b\rVert^{2}+1) \end{bmatrix} \in \mathbb{R}^{(n+1) \times (n+1)}.
\]
Alternatively, in terms of orthogonal affine coordinates $[A,b_0] \in \mathbb{R}^{n \times (k+1)}$,
\[
P_{\mathbb{A} + b} = \begin{bmatrix} AA^\tp  +b_0b_0^\tp /(\lVert b_0 \rVert^{2}+1)& b_0/(\lVert b_0\rVert^{2}+1) \\ b_0^\tp /(\lVert b_0\rvert^{2}+1)& 1/(\lVert b_0\rVert^{2}+1)\end{bmatrix} \in \mathbb{R}^{(n+1) \times (n+1)}.
\]
\end{definition}
It is easy to check that $P_{\mathbb{A} + b}$ is indeed an orthogonal projection matrix, i.e.,
$P_{\mathbb{A} + b}^2 =  P_{\mathbb{A} + b} = P_{\mathbb{A} + b}^\tp $.
Unlike Stiefel coordinates, projection coordinates of a given affine subspace are unique.

\section{Schubert calculus on the affine Grassmannian}\label{sec:schubert}

We will show that basic aspects of Schubert calculus on the Grassmannian \cite{KL1972} could be readily extended to an ``affine Schubert calculus'' on the affine Grassmannian, with affine analogues of flags, Schubert varieties, Schubert cycles \cite{Sottile}. As is the case for (co)homology of the Grassmannian, the materials in this section will be important for our (co)homology calculations in Section~\ref{sec:coho}; what is perhaps more surprising is that our study of distances between affine subspaces of different dimensions in Section~\ref{sec:distdifferent} will also rely on affine Schubert varieties.

In this paragraph, we briefly review some basic terminologies and facts in Schubert calculus for the reader's easy reference. The \emph{Schubert variety} of a flag $\mathbb{A}_1 \subseteq \cdots \subseteq \mathbb{A}_k$ in $\mathbb{R}^n$ is a subvariety of $\Gr(k,n)$ defined by
\[
\Omega(\mathbb{A}_1,\dots, \mathbb{A}_k) \coloneqq \{ \mathbb{B} \in \Gr(k,n) : \dim (\mathbb{B} \cap \mathbb{A}_j) \ge j,\; j = 1,\dots,k \}.
\]
It is a standard fact  \cite[Proposition~4]{KL1972} that
\begin{equation}\label{eq:iso}
\Omega(\mathbb{A}_1,\dots, \mathbb{A}_k) \cong \Omega(\mathbb{B}_1,\dots, \mathbb{B}_k)\quad
 \text{if } \dim \mathbb{A}_j = \dim \mathbb{B}_j, \; j=1,\dots, k.
\end{equation}
So when the choice of the flag is unimportant, we may take it to be 
$\mathbb{R}^{d_1} \subseteq \cdots \subseteq \mathbb{R}^{d_k}$
and denote the corresponding Schubert variety  by $\Omega(d_1,\dots, d_k)$. The `$\cong$' in \eqref{eq:iso} may be taken to either homeomorphism of topological spaces or biregular isomorphism of algebraic variety but it cannot in general be replaced by `$=$'  --- two different flags of the same dimensions determine different varieties in $ \Gr(k,n)$.
The following properties \cite{HHD1994,Prasolov2007} of Schubert varieties are also well-known.
\begin{facts}\label{fact:schubert}
\begin{enumerate}[\upshape (i)]
\item\label{it:dim} The dimension of a Schubert variety is given by
\[
\dim \Omega(d_1,\dots, d_k) = \sum_{j=1}^k d_j - \frac{1}{2} k (k+1).
\]
\item\label{it:cyc} The cycles determined by Schubert varieties of dimension $i$ form a basis for the $i$th homology group $\H_i (\Gr(k,n),\mathbb{Z}_2)$ and cohomology group $\H^i(\Gr(k,n),\mathbb{Z}_2)$, which are isomorphic and
\[
\H_i (\Gr(k,n),\mathbb{Z}_2)  \simeq \H^i(\Gr(k,n),\mathbb{Z}_2) \simeq \mathbb{Z}_2^{r_i},
\]
where $r_i$ is the number of Schubert varieties of dimension $i$. Here $\mathbb{Z}_2 \coloneqq \mathbb{Z}/2\mathbb{Z}$.
\item\label{it:cell} The collection of Schubert varieties in $\Gr(k,n)$ over all flags of length $k$ in $\mathbb{R}^n$ gives a cell decomposition for $\Gr(k,n)$.
\end{enumerate}
\end{facts}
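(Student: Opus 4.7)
The plan is to derive all three items from a single Schubert cell decomposition of $\Gr(k,n)$ attached to a complete flag $\mathbb{R}^1 \subseteq \mathbb{R}^2 \subseteq \cdots \subseteq \mathbb{R}^n$. For each strictly increasing sequence $1 \le d_1 < \cdots < d_k \le n$ I would define the open Schubert cell $\Omega^\circ(d_1,\dots,d_k) \subseteq \Omega(d_1,\dots,d_k)$ by the strict rank conditions $\dim(\mathbb{B}\cap\mathbb{R}^{d_j}) = j$ and $\dim(\mathbb{B}\cap\mathbb{R}^{d_j-1}) = j-1$ for every $j$. The key observation underlying every item is that each $\mathbb{B}$ in this cell admits a \emph{unique} $k\times n$ matrix representative in reduced row-echelon form whose pivots occupy precisely the columns $d_1 < \cdots < d_k$.

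For (i), I would count parameters in this echelon representative directly. In the $j$-th row there is a $1$ in column $d_j$, a zero in every column $d_i$ with $i \ne j$, a zero in every column to the right of $d_j$, and a free real entry in each of the remaining $d_j - j$ positions. Summing yields $\dim \Omega^\circ(d_1,\dots,d_k) = \sum_j d_j - k(k+1)/2$; since $\Omega(d_1,\dots,d_k)$ is the closure of $\Omega^\circ(d_1,\dots,d_k)$ and of the same dimension, (i) follows.

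For (iii), I would verify that as $(d_1,\dots,d_k)$ ranges over all strictly increasing sequences in $\{1,\dots,n\}$, the open cells $\Omega^\circ(d_1,\dots,d_k)$ partition $\Gr(k,n)$: each $k$-plane has a unique pivot pattern under Gaussian elimination with respect to the chosen flag. The containment $\overline{\Omega^\circ(d)} \subseteq \bigcup_{d' \preceq d} \Omega^\circ(d')$, where $d' \preceq d$ means $d'_j \le d_j$ componentwise, then shows each cell closure lies in the appropriate skeleton and that the attaching maps respect the skeletal filtration, producing the CW structure of (iii).

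The hardest part is (ii). The CW structure of (iii) immediately gives the upper bound $\dim_{\mathbb{Z}_2} \H_i(\Gr(k,n),\mathbb{Z}_2) \le r_i$, but the reverse inequality is nontrivial because, over $\mathbb{Z}$, the integral cellular boundaries are genuinely nonzero and produce the known $2$-torsion in $\H_*(\Gr(k,n);\mathbb{Z})$. I would close the gap by computing the mod-$2$ Poincaré polynomial of $\Gr(k,n)$ independently\,---\,for instance from the Borel presentation of $\H^*(\Gr(k,n);\mathbb{Z}_2)$ as the quotient of $\H^*(B\O(k);\mathbb{Z}_2)\otimes \H^*(B\O(n-k);\mathbb{Z}_2)$ by the Whitney-sum relation, or equivalently via a perfect $\mathbb{Z}_2$-Morse function whose unstable manifolds are the Schubert cells\,---\,and conclude by dimension count that every cellular $\mathbb{Z}_2$-boundary must vanish. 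Universal coefficients over the field $\mathbb{Z}_2$ then identifies $\H^i$ with $\H_i$ and finishes (ii). This mod-$2$ boundary vanishing is the main obstacle, and is precisely the reason the isomorphism fails over $\mathbb{Z}$.
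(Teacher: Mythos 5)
The paper offers no proof of this statement at all: it is presented as a list of ``Facts'' with citations to Hodge--Pedoe and Prasolov, so there is no in-paper argument to compare yours against. Taken on its own merits, your sketch is the standard textbook proof and is essentially correct. The echelon-form parametrization of the open cell indexed by the pivot pattern $d_1<\cdots<d_k$ gives exactly $\sum_j(d_j-j)=\sum_j d_j-\tfrac{1}{2}k(k+1)$ free parameters, and since $\Omega(d_1,\dots,d_k)$ is the closure of that cell, (i) follows. The uniqueness of the pivot pattern gives the partition of $\Gr(k,n)$ into cells, and the closure relation $\overline{\Omega^\circ(d)}=\Omega(d)=\bigcup_{d'\preceq d}\Omega^\circ(d')$ gives (iii). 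Your strategy for (ii) --- establish the mod-$2$ Poincar\'e polynomial independently and then observe that if the total Betti number equals the total number of cells, every cellular $\mathbb{Z}_2$-boundary map must vanish --- is sound, since $\sum_i\dim \H_i=\sum_i c_i-2\sum_i\operatorname{rank}\partial_i$, and you are right that this boundary vanishing is the genuine content of (ii) and exactly what fails integrally.

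Two points deserve more care than your sketch gives them. First, for (iii), a partition into locally closed cells whose closures are unions of lower-dimensional cells is not yet a CW structure; you still need characteristic maps from closed balls onto the cell closures. For Schubert cells these are standard (e.g., the closed cell is the image of a product of closed intervals, or of a closed Bruhat cell under the projection from the flag manifold), but the existence of such maps is the actual content of ``cell decomposition'' and should be stated. Second, your independent computation of the mod-$2$ Poincar\'e polynomial must really be independent of the Schubert basis: the Borel presentation of $\H^\ast(\Gr(k,n),\mathbb{Z}_2)$ as $\mathbb{Z}_2[w_1,\dots,w_k,\bar w_1,\dots,\bar w_{n-k}]$ modulo the Whitney-sum relation is in many references itself derived from the Schubert cells, which would be circular. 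The non-circular routes are the one you mention in passing (a perfect $\mathbb{Z}_2$-Morse function), a spectral-sequence or Gysin-sequence induction, or the direct Milnor--Stasheff-style verification that every incidence number of adjacent Schubert cells is even; any one of these closes the argument, but one of them has to be carried out. With those two points filled in, your proof is complete and is, for practical purposes, the proof the cited references give.
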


We will now introduce an affine analogue of the Schubert variety in the affine Grassmannian using an \emph{affine flag}, i.e., an increasing sequence of nested affine subspaces.
\begin{definition}\label{def:Psi}
Let $\mathbb{A}_1 + b_1 \subseteq \cdots \subseteq \mathbb{A}_k + b_k$ be an affine flag in $\mathbb{R}^n$. 
The corresponding \emph{affine Schubert variety} is a subvariety of $\Graff(k,n)$ defined by
\[
\Psi(\mathbb{A}_1+b_1,\dots, \mathbb{A}_k+b_k)  \coloneqq \{ \mathbb{B} + c\in \Graff(k,n): \dim \bigl( (\mathbb{B} +c) \cap (\mathbb{A}_j + b_j)\bigr) \ge j,\; j = 1,\dots,k \}.
\]
\end{definition}
We first show that the affine flag may always be chosen such that $b_1 = \dots = b_k$.
\begin{lemma}
For any affine flag $\mathbb{A}_1 + b_1 \subseteq \cdots \subseteq \mathbb{A}_k + b_k$, there exists a displacement vector $b\in \mathbb{R}^n$ such that $\mathbb{A}_j + b_j = \mathbb{A}_j + b$, $j=1,\dots, k$. Thus every affine Schubert variety is of the form $\Psi(\mathbb{A}_1 + b,\dots, \mathbb{A}_k + b)$.
\end{lemma}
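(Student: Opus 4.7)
The plan is to exploit the defining property of coset representatives: for a fixed linear subspace $\mathbb{A}_j$, we have $\mathbb{A}_j + b_j = \mathbb{A}_j + b$ if and only if $b - b_j \in \mathbb{A}_j$, equivalently, if and only if $b$ lies in the affine subspace $\mathbb{A}_j + b_j$. So to find a common displacement vector, it suffices to produce a single point $b \in \mathbb{R}^n$ that lies in every $\mathbb{A}_j + b_j$ simultaneously. The nestedness of the flag makes this almost automatic.

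First I would pick $b$ to be any point of the \emph{smallest} member of the flag, for example $b \coloneqq b_1 \in \mathbb{A}_1 + b_1$ (taking the zero vector in $\mathbb{A}_1$). Since $\mathbb{A}_1 + b_1 \subseteq \mathbb{A}_j + b_j$ for every $j = 1,\dots,k$, the point $b$ lies in each $\mathbb{A}_j + b_j$, hence $b - b_j \in \mathbb{A}_j$, and therefore $\mathbb{A}_j + b_j = \mathbb{A}_j + b$ by the observation in the previous paragraph. The second sentence of the lemma is then immediate from the definition of $\Psi(\mathbb{A}_1 + b_1, \dots, \mathbb{A}_k + b_k)$, since replacing each $b_j$ by the common vector $b$ does not alter the underlying affine subspaces, and therefore does not alter the intersection conditions $\dim((\mathbb{B}+c)\cap (\mathbb{A}_j + b_j)) \ge j$.

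There is essentially no obstacle here beyond unwinding the coset notation; the only thing one might double-check is that the inclusion of \emph{affine} subspaces $\mathbb{A}_{j-1} + b_{j-1} \subseteq \mathbb{A}_j + b_j$ forces $\mathbb{A}_{j-1} \subseteq \mathbb{A}_j$ as \emph{linear} subspaces (which follows by subtracting any point of the smaller flat from itself to see that $\mathbb{A}_{j-1}$ is a translate of a subset of $\mathbb{A}_j$ that contains the origin). This fact is not strictly needed for the argument above, but it confirms the intuition that an affine flag is nothing more than a linear flag $\mathbb{A}_1 \subseteq \cdots \subseteq \mathbb{A}_k$ together with a single common displacement, which is exactly what the lemma asserts.
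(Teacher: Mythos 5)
Your proof is correct and follows essentially the same route as the paper: both arguments pick a point of the smallest flat $\mathbb{A}_1+b_1$ (which by nestedness lies in every $\mathbb{A}_j+b_j$) and use the coset criterion $b-b_j\in\mathbb{A}_j$ to replace each $b_j$ by the common displacement. No gaps.
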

\begin{proof}
Let $-b$ be any element in $\mathbb{A}_1 + b_1$. By definition, $-b\in \mathbb{A}_j + b_j$, $j=1,\dots, k$. So $b_j \in \mathbb{A}_j + b$. Therefore $ \mathbb{A}_j  + b_j = \mathbb{A}_j + b$, $j=1,\dots, k$.
\end{proof}
It is straightforward to derive an analogue of \eqref{eq:iso}.
\begin{proposition}\label{prop:transit}
For any two affine flags $\mathbb{A}_1 + b \subseteq \cdots \subseteq \mathbb{A}_k + b$ and $\mathbb{B}_1 + c \subseteq \cdots \subseteq \mathbb{B}_k + c$ where $\dim \mathbb{A}_j  = d_j = \dim \mathbb{B}_j$, $j=1,\dots, k$, we have 
\[
\Psi(\mathbb{A}_1+b,\dots,\mathbb{A}_k + b) \cong \Psi(\mathbb{B}_1 + c, \dots, \mathbb{B}_k + c),
\]
and so we may write $\Psi(d_1,\dots, d_k) $  when the specific affine flag is unimportant. 
\end{proposition}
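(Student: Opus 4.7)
The plan is to mimic the classical proof of \eqref{eq:iso} for ordinary Schubert varieties by exhibiting an element of the general affine group $\GA(n)$ (or, for a stronger statement, of $\E(n)$) that carries one affine flag to the other, and then showing that its induced action on $\Graff(k,n)$ maps the first affine Schubert variety biregularly onto the second.

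First, I would reduce the problem to transitivity of $\GA(n)$ on affine flags of a fixed dimension sequence. By the lemma just proved, both flags already share a common displacement vector within themselves, so we only need to find $(X,v)\in \GA(n)$ with $(X,v)\cdot(\mathbb{A}_j + b) = \mathbb{B}_j + c$ for $j=1,\dots,k$. The transitivity of $\GL(n)$ on ordinary flags of dimensions $d_1 \le \dots \le d_k$ yields some $X\in \GL(n)$ with $X\mathbb{A}_j = \mathbb{B}_j$ for all $j$. Taking $v \coloneqq c - Xb$ gives
\[
(X,v)\cdot(\mathbb{A}_j + b) = X\mathbb{A}_j + Xb + v = \mathbb{B}_j + c,
\]
as required. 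Thus $\GA(n)$ acts transitively on affine flags with a prescribed dimension sequence.

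Next, I would verify that the $\GA(n)$-action on $\Graff(k,n)$ (inherited from the action of $\GA(n)$ on $\mathbb{R}^n$ described at the end of Section~\ref{sec:alg}) preserves the incidence conditions in Definition~\ref{def:Psi}. Any $g\in \GA(n)$ permutes $k$-flats and, being a bijection of $\mathbb{R}^n$ that maps affine subspaces to affine subspaces of the same dimension, preserves intersections: $g\bigl((\mathbb{B}+c)\cap(\mathbb{A}_j+b_j)\bigr) = g(\mathbb{B}+c)\cap g(\mathbb{A}_j+b_j)$ with the same dimension. Consequently, if $g$ sends the first affine flag to the second, then the map $\mathbb{B}+c' \mapsto g\cdot(\mathbb{B}+c')$ restricts to a morphism $\Psi(\mathbb{A}_1+b,\dots,\mathbb{A}_k+b) \to \Psi(\mathbb{B}_1+c,\dots,\mathbb{B}_k+c)$, whose inverse is given by the morphism induced by $g^{-1}$. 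Since the action of $\GA(n)$ on $\Graff(k,n)$ is by regular (and also smooth and continuous) automorphisms — this can be seen either directly from the coordinate descriptions in Sections~\ref{sec:alg}--\ref{sec:diff} or from the homogeneous space presentation $\Graff(k,n)\cong \GA(n)/(\GL(n-k)\times \GA(k))$ — the map is a biregular isomorphism, giving the claim in all three categories simultaneously.

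I do not expect any real obstacle: the only subtle point is verifying that the $\GA(n)$-action on $\Graff(k,n)$ is by regular automorphisms so that the induced map on Schubert varieties is biregular (and not merely bijective). This follows from the fact that the action map $\GA(n)\times \Graff(k,n)\to \Graff(k,n)$ is regular, which in turn follows from the homogeneous-space description established in the previous section. With this in hand, the notation $\Psi(d_1,\dots,d_k)$ for the isomorphism class of $\Psi(\mathbb{A}_1+b,\dots,\mathbb{A}_k+b)$ is well-defined.
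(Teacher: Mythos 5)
Your proposal is correct and follows essentially the same route as the paper's proof: exhibit $(X,y)\in\GA(n)$ carrying one affine flag to the other, with $X$ supplied by the transitivity of $\GL(n)$ on linear flags of fixed dimensions and $y\coloneqq c-Xb$. You go a step further than the paper by explicitly checking that the $\GA(n)$-action preserves the incidence conditions and acts by biregular automorphisms, which the paper leaves implicit.
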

\begin{proof}
There is a general affine transformation $(X,y)\in \GA(n)$ such that
\[
(X,y)\cdot (\mathbb{A}_j + b) = X(\mathbb{A}_j) + Xb + y  = \mathbb{B}_j + c,\quad j=1,\dots, k,
\]
i.e., $X(\mathbb{A}_j) = \mathbb{B}_j$ and $Xb + y = c$. The existence of $X \in \GL(n)$ is guaranteed by the transitive action of $\GL(n) $ on (linear) flags of fixed dimensions $(d_1,\dots,d_k)$. We then set $y \coloneqq c - Xb$.
\end{proof}

We also provide an analogue of Fact~\ref{fact:schubert}\eqref{it:dim}, whose proof is somewhat more involved.
\begin{theorem}\label{thm:dim}
The dimension of an affine Schubert variety is 
\[
\dim \Psi(d_1,\dots, d_k) = \sum_{j=1}^k d_j - \frac{k(k+1)}{2} + (d_1-1).
\]
\end{theorem}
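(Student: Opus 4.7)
The plan is to exhibit $\Psi(d_1,\dots,d_k)$ as a fiber bundle over the ordinary Schubert variety $\Omega(d_1,\dots,d_k)$ via the deaffine map $\tau$ of \eqref{eq:deaff}, and then use additivity of dimensions. By Proposition~\ref{prop:transit} we may fix a representative affine flag $\mathbb{A}_1 + b \subseteq \cdots \subseteq \mathbb{A}_k+b$ with $\dim \mathbb{A}_j = d_j$, and write $\Psi \coloneqq \Psi(\mathbb{A}_1+b,\dots,\mathbb{A}_k+b)$ and $\Omega \coloneqq \Omega(\mathbb{A}_1,\dots,\mathbb{A}_k)$.

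First I would analyze the incidence condition. If $(\mathbb{B}+c) \cap (\mathbb{A}_j+b) \ne \varnothing$, then this intersection is an affine subspace whose underlying linear subspace is $\mathbb{B} \cap \mathbb{A}_j$, so its dimension equals $\dim(\mathbb{B}\cap \mathbb{A}_j)$; moreover the intersection is nonempty iff $c - b \in \mathbb{B} + \mathbb{A}_j$. Since $\mathbb{A}_1 \subseteq \cdots \subseteq \mathbb{A}_k$, the nonemptiness condition for $j=1$ is the strongest, so $\mathbb{B}+c \in \Psi$ if and only if (a) $\mathbb{B} \in \Omega$ and (b) $c - b \in \mathbb{B} + \mathbb{A}_1$. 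In particular, $\tau$ restricts to a well-defined surjection $\tau|_\Psi : \Psi \to \Omega$, because given $\mathbb{B} \in \Omega$ we may take $c = b$ to produce a preimage.

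Next I would compute the fibers. Two displacements $c, c'$ both satisfying (b) yield the same affine subspace $\mathbb{B}+c = \mathbb{B}+c'$ iff $c-c' \in \mathbb{B}$. Hence
\[
\tau|_\Psi^{-1}(\mathbb{B}) \;\cong\; (\mathbb{B}+\mathbb{A}_1)/\mathbb{B},
\qquad
\dim \tau|_\Psi^{-1}(\mathbb{B}) = \dim \mathbb{A}_1 - \dim(\mathbb{B}\cap \mathbb{A}_1) = d_1 - \dim(\mathbb{B}\cap \mathbb{A}_1).
\]
On the (open, dense) generic locus of $\Omega$ we have $\dim(\mathbb{B} \cap \mathbb{A}_1) = 1$, so the generic fiber has dimension $d_1 - 1$.

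Finally I would combine these pieces. The Schubert variety $\Omega$ is irreducible of dimension $\sum_{j=1}^k d_j - \tfrac12 k(k+1)$ by Facts~\ref{fact:schubert}\eqref{it:dim}, and the fiber $(\mathbb{B}+\mathbb{A}_1)/\mathbb{B}$ is an affine space, hence irreducible. Since $\tau|_\Psi$ is a surjective morphism of algebraic varieties whose generic fiber is irreducible of dimension $d_1-1$, the total space $\Psi$ is irreducible and
\[
\dim \Psi = \dim \Omega + (d_1 - 1) = \sum_{j=1}^k d_j - \frac{k(k+1)}{2} + (d_1-1),
\]
by the fiber-dimension theorem. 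The main subtlety is verifying the fiber description uniformly, i.e.\ that the displacement condition (b) for $j=1$ really implies all the higher ones, and then handling the jump loci in $\Omega$ where $\dim(\mathbb{B}\cap \mathbb{A}_1) > 1$ shrinks the fibers but occurs in codimension $\geq 1$, so it does not affect the dimension count.
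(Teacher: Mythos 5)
Your proof is correct, but it takes a genuinely different route from the paper's. The paper pushes $\Psi(d_1,\dots,d_k)$ forward through the embedding $j:\Graff(k,n)\to\Gr(k+1,n+1)$ of \eqref{eq:j} and identifies $j\bigl(\Psi(d_1,\dots,d_k)\bigr)$ as the intersection of the ordinary Schubert variety $\Omega(d_1,d_1+1,\dots,d_k+1)$ (for a flag of length $k+1$) with the open dense set $j\bigl(\Graff(k,n)\bigr)$; the dimension then drops out of Fact~\ref{fact:schubert}\eqref{it:dim} applied in $\Gr(k+1,n+1)$. You instead fiber $\Psi$ over $\Omega(d_1,\dots,d_k)$ via the deaffine map $\tau$, observing that for $\mathbb{B}\in\Omega$ the incidence conditions reduce to the single displacement condition $c-b\in\mathbb{B}+\mathbb{A}_1$, so that the fiber is the affine space $(\mathbb{B}+\mathbb{A}_1)/\mathbb{B}$ of dimension $d_1-\dim(\mathbb{B}\cap\mathbb{A}_1)$. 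This is a clean and more elementary computation that stays inside $\Graff(k,n)$, and it meshes nicely with the later structural results: it sharpens Lemma~\ref{lem:Schubert2} (your fiber is not merely convex but an actual affine subspace of $\mathbb{R}^n/\mathbb{B}$) and makes transparent exactly where $\Psi$ sits inside $\tau^{-1}(\Omega)$ in diagram \eqref{diag:schubert}. What the paper's route buys in exchange is the identification of $j(\Psi)$ with an open dense subset of an honest Schubert variety, which is reused elsewhere (e.g.\ irreducibility of the affine Schubert varieties follows immediately). One small caveat: your assertion that surjectivity plus irreducible base plus irreducible generic fiber forces $\Psi$ to be irreducible is not a valid general principle, so you should not lean on irreducibility of $\Psi$ to invoke the fiber-dimension theorem. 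But this does not damage the dimension count: stratifying $\Omega$ by $m=\dim(\mathbb{B}\cap\mathbb{A}_1)$, the stratum with $m>1$ has dimension at most $\dim\Omega$ while its fibers have dimension $d_1-m<d_1-1$, so every non-generic stratum contributes strictly less than $\dim\Omega+(d_1-1)$ and the maximum is attained on the dense stratum $m=1$ --- no codimension estimate on the jump loci is even needed.
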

\begin{proof}
Let $j$ be the embedding in \eqref{eq:j}. We will determine the dimension of $j\bigl(\Psi(d_1,\dots, d_k)\bigr)$, which is clearly the same as that of $\Psi(d_1,\dots, d_k)$. We claim that 
\begin{equation}\label{eqn:dim}
j\bigl(\Psi(d_1,\dots, d_k)\bigr) = \Omega(d_1,d_1+1,\dots, d_k+1) \cap j\bigl(\Graff(k,n)\bigr).
\end{equation}
Since $\Omega(d_1,d_1+1,\dots, d_k+1)$ is an irreducible subset of $\Gr(k+1,n+1)$ and $j\bigl(\Graff(k,n)\bigr)$ is an affine open subset of $\Gr(k+1,n+1)$, we obtain the required dimension via
\begin{align*}
\dim j\bigl(\Psi(d_1,\dots, d_k)\bigr) &= \dim \Omega(d_1,d_1+1,\dots, d_k+1)  \\
&= d_1 + \sum_{i=1}^k (d_i + 1) - \frac{(k+1)(k+2)}{2}  = \sum_{i=1}^k d_i - \frac{k(k+1)}{2} + (d_1-1),
\end{align*}
where we have used Fact~\ref{fact:schubert}\eqref{it:dim} for the second equality.

It remains to establish \eqref{eqn:dim}. Let $\mathbb{A}_0+ b \subseteq \mathbb{A}_1+ b \subseteq \dots \subseteq \mathbb{A}_k+b$ be an affine flag with $\dim (\mathbb{A}_i + b) = d_i$, $i=0,1,\dots, k$ where we set $d_0 \coloneqq d_1 - 1$.

Let $\mathbb{B} + c \in \Psi(\mathbb{A}_1 + b,\dots, \mathbb{A}_k + b)$. As
$\dim (\mathbb{A}_1 + b) \cap (\mathbb{B} + c) \ge 1$,
there is some $-x$ contained in both $\mathbb{A}_1 + b$ and $\mathbb{B} + c$. Since $\mathbb{A}_1 + b \subseteq \cdots \subseteq  \mathbb{A}_k +b$, we have
\begin{equation}\label{eq:x}
\mathbb{A}_i + b = \mathbb{A}_i + x,\quad \mathbb{B} + c = \mathbb{B} + x,\qquad i=1,\dots,k.
\end{equation}
Therefore, for any $i=1,\dots,k$,
\[
\dim j(\mathbb{B} + c) \cap j(\mathbb{A}_i + b) = \dim j(\mathbb{B} + x) \cap j(\mathbb{A}_i + x) 
= \dim (\mathbb{B} + c) \cap (\mathbb{A}_i + b) + 1 
 \ge i + 1.
\]
Since $j(\mathbb{A}_0 + b)$ is a codimension-one linear subspace of $j(\mathbb{A}_1 + b)$, we also have 
\[
\dim j(\mathbb{B} + c) \cap j(\mathbb{A}_0 + b)\ge \dim j(\mathbb{B} + c) \cap j(\mathbb{A}_1 + b) - 1 \ge 1.
\]
Hence we must have
\[
j(\mathbb{B} + c) \in \Omega\bigl( j(\mathbb{A}_0 + b), j(\mathbb{A}_1 + b),\dots, j(\mathbb{A}_k + b)\bigr) \cap j\bigl(\Graff(k,n)\bigr).
\]
This shows the ``$\subseteq$'' in \eqref{eqn:dim}.

Conversely, let $j(\mathbb{B} + c) \in \Omega\bigl( j(\mathbb{A}_0 + b),j(\mathbb{A}_1 + b),\dots, j(\mathbb{A}_k + b)\bigr)$. On the one hand, we have
\[
\dim j(\mathbb{B} + c) \cap  j(\mathbb{A}_i + b) \ge i + 1;
\]
and on the other hand, since $j$ is an embedding, we have 
\[
j \bigl((\mathbb{B} + c ) \cap (\mathbb{A}_i + b ) \bigr) = j(\mathbb{B} + c) \cap  j(\mathbb{A}_i +b),
\]
for any $i=0,1,\dots, k$.
Therefore, we have
\[
\dim ( \mathbb{B} + c ) \cap (\mathbb{A}_i + b ) = \dim j(\mathbb{B} + c) \cap  j(\mathbb{A}_i + b) \ge i + 1,\quad i=0,1,\dots, k.
\]
In other words,  $\mathbb{B} + c \in \Psi(\mathbb{A}_1 + b,\dots,\mathbb{A}_k + b)$. This shows the ``$\supseteq$'' in \eqref{eqn:dim}.
\end{proof}

In Section~\ref{sec:coho}, we will give the affine analogues of Facts~\ref{fact:schubert}\eqref{it:cyc} and \eqref{it:cell} as Theorem~\ref{thm:homology} and Proposition~\ref{prop:cell} respectively.

There are two affine Schubert varieties that deserve special mention because of their importance in our metric geometry discussions in Section~\ref{sec:distdifferent} and, to a lesser extent, also the probability discussions in Section~\ref{sec:prob}.
\begin{definition}\label{def:Psi+-}
Let $\mathbb{A}+b \in \Graff(k,n)$ and $\mathbb{B}+c \in \Graff(l,n)$ where $k \le l \le n$. The \emph{affine Schubert varieties} of $l$-flats containing $\mathbb{A}+b$ and $k$-flats contained in $\mathbb{B} + c$ are respectively
\begin{equation}\label{eq:asv}
\begin{aligned}
\Psi_{+}(\mathbb{A}+b)&\coloneqq \bigl\{\mathbb{X}+y\in \Graff(l,n) : \mathbb{A}+b\subseteq \mathbb{X}+y\bigr\}, \\
\Psi_{-}(\mathbb{B}+c)&\coloneqq \bigl\{\mathbb{Y}+z\in \Graff(k,n) : \mathbb{Y}+z\subseteq \mathbb{B}+c\bigr\}.
\end{aligned}
\end{equation}
\end{definition}
The nomenclature in Definition~\ref{def:Psi+-} is justified as $\Psi_{+}(\mathbb{A}+b)$ is the affine Schubert variety of the affine flag 
\begin{equation}\label{eqn:flagdescription1}
\{0\} \eqqcolon \mathbb{A}_0 + b_0 \subseteq \mathbb{A}_1 + b_1 \subseteq \dots \subseteq \mathbb{A}_k + b_k \coloneqq \mathbb{A} + b \subseteq \dots \subseteq \mathbb{A}_l + b_l,
\end{equation}
where $\mathbb{A}_{k+i} + b_{k+i}$ is an affine subspace of dimension $n-l+(k+i)$, $i =1,\dots,l-k$; and
$\Psi_{-}(\mathbb{B}+c)$ is the affine Schubert variety of the affine flag
\begin{equation}\label{eqn:flagdescription2}
\{0\} \eqqcolon \mathbb{B}_0 + c_0 \subseteq \mathbb{B}_1 + c_1 \subseteq \dots  \subseteq \mathbb{B}_k + c_k \coloneqq \mathbb{B} + c
\end{equation}
where $\mathbb{B}_j + c_j$ is an affine subspace of dimension $l-k+j,j=1,\dots,k$.

We next discuss the geometry of these sets, starting with the observation that 
$\Psi_{+}(\mathbb{A}+b) $ is isomorphic to a Grassmannian and $\Psi_{-}(\mathbb{B}+c)$ is isomorphic to an affine Grassmannian. 
\begin{proposition}\label{prop:OmegaGraff}
Let $\mathbb{A}+b\in \Graff(k,n)$ and $\mathbb{B}+c\in \Graff(l,n)$.  Then
\[
\Psi_{+}(\mathbb{A}+b) \cong \Gr(n-l,n-k) \quad \emph{and} \quad
\Psi_{-}(\mathbb{B}+c) \cong \Graff(k,l) 
\]
as Riemannian manifolds and algebraic varieties. In particular, we have 
\[
\dim \Psi_+(\mathbb{A} + b) = (n - l)(l-k),\quad \dim \Psi_-(\mathbb{B} + c) = (k+1)(l-k).
\]
\end{proposition}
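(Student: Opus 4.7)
The plan is to exhibit explicit bijections realizing each isomorphism and then to verify that they transport the Riemannian metric and algebraic structure from the ambient affine Grassmannians.

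For $\Psi_{-}(\mathbb{B}+c)$, I would fix any point $p \in \mathbb{B}+c$ and consider the translation $T \colon v \mapsto v - p$ of $\mathbb{R}^n$. Since $T$ carries $\mathbb{B}+c$ onto the linear subspace $\mathbb{B}$, it sends each $k$-flat $\mathbb{Y}+z$ contained in $\mathbb{B}+c$ bijectively to a $k$-flat contained in $\mathbb{B}$, and every such $k$-flat arises this way. This gives a natural bijection $\Psi_{-}(\mathbb{B}+c) \to \Graff_k(\mathbb{B})$. Since $\mathbb{B}$ is an $l$-dimensional Euclidean subspace, choosing an orthonormal basis identifies it isometrically with $\mathbb{R}^l$ and yields $\Graff_k(\mathbb{B}) \cong \Graff(k,l)$. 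The translation $T$ is simultaneously an isometry of $\mathbb{R}^n$ and a biregular map, so the composition preserves both structures.

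For $\Psi_{+}(\mathbb{A}+b)$, I would first note that any $l$-flat $\mathbb{X}+y$ containing $\mathbb{A}+b$ necessarily contains $b$, so it can be written uniquely as $\mathbb{X}+b$ with $\mathbb{A} \subseteq \mathbb{X} \in \Gr(l,n)$. The assignment $\mathbb{X}+b \mapsto \mathbb{X}^\perp$ produces a bijection from $\Psi_{+}(\mathbb{A}+b)$ onto $\{\mathbb{W} \in \Gr(n-l,n) : \mathbb{W} \subseteq \mathbb{A}^\perp\}$, and the latter is canonically identified with $\Gr(n-l, n-k)$ upon fixing an orthonormal basis of $\mathbb{A}^\perp \cong \mathbb{R}^{n-k}$. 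Each step is smooth and biregular. The dimension formulas then follow immediately from $\dim \Gr(n-l, n-k) = (n-l)(l-k)$ and from Proposition~\ref{prop:smooth} applied to $\Graff(k,l)$.

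The main subtlety, rather than technical difficulty, is showing that these bijections transport the ambient structures on $\Graff(k,n)$ and $\Graff(l,n)$, not merely abstract manifold or variety structures. I would address this through the homogeneous-space description in Proposition~\ref{prop:homo}: realize $\Psi_{-}(\mathbb{B}+c)$ as the orbit in $\Graff(k,n)$ of the subgroup of $\E(n)$ preserving $\mathbb{B}+c$ (which acts on $\mathbb{B}+c$ as $\E(l)$), and $\Psi_{+}(\mathbb{A}+b)$ as the orbit in $\Graff(l,n)$ of the stabilizer of $\mathbb{A}+b$, whose restriction to $\mathbb{A}^\perp$ is $\O(n-k)$. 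The bijections above are then equivariant for these actions, hence are diffeomorphisms respecting the reductive homogeneous Riemannian structure. The algebraic-variety statement is handled in parallel using the affine flag descriptions \eqref{eqn:flagdescription1} and \eqref{eqn:flagdescription2} together with the embedding $j$ from Theorem~\ref{thm:alg}, which identifies $\Psi_{\pm}$ with intersections of classical Schubert varieties with the Zariski open subset $j(\Graff(k,n)) \subseteq \Gr(k+1,n+1)$.
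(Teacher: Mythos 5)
Your proposal is correct and follows essentially the same route as the paper: translate $\Psi_{-}(\mathbb{B}+c)$ by $-c$ to identify it with $\Graff_k(\mathbb{B})\cong\Graff(k,l)$, and translate $\Psi_{+}(\mathbb{A}+b)$ by $-b$ to identify it with $\Omega_{+}(\mathbb{A})\cong\Gr(n-l,n-k)$ (the paper cites \cite[Proposition~21]{YL} for this last isomorphism, which you instead re-derive via orthogonal complements). Your extra discussion of equivariance and structure preservation only elaborates on what the paper leaves implicit.
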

\begin{proof}
We first observe that the map
$\varphi:\Psi_{+}(\mathbb{A}+b)\to \Omega_{+}(\mathbb{A})$,
$\mathbb{X}+y \mapsto \mathbb{X}+y-b$,
is well-defined since $\mathbb{A}\subseteq \mathbb{X}+y-b$ by our choice of $\mathbb{X}+y$. Also,
$\psi:\Omega_{+}(\mathbb{A}) \to \Psi_{+}(\mathbb{A}+b)$,
$\mathbb{X}\mapsto \mathbb{X}+b$,
is the inverse of $\varphi$ and so it is an isomorphism. Together with \cite[Proposition~21]{YL}, we obtain the first isomorphism $\Psi_{+}(\mathbb{A}+b)\cong \Omega_{+}(\mathbb{A})\cong \Gr(n-l,n-k)$.
For the second isomorphism, consider
$\varphi': \Psi_{-}(\mathbb{B}+c)\to \Graff_k(\mathbb{B})$,
$\mathbb{Y}+z\mapsto \mathbb{Y}+z-c$,
which is well-defined since $\mathbb{Y}+z-c$ is an affine subspace of dimension $k$ in $\mathbb{B}$. Its inverse is given by
$\psi':\Graff_k(\mathbb{B}) \to \Psi_{-}(\mathbb{B}+c)$,
$\mathbb{Y}+z\mapsto \mathbb{Y}+z+c$,
and so it is an isomorphism. The required isomorphism then follows from $\Psi_{-}(\mathbb{B}+c) \cong \Graff_k(\mathbb{B}) \cong \Graff(k,l)$.
\end{proof}
The asymmetry in Proposition~\ref{prop:OmegaGraff} is expected. $\Psi_{+}(\mathbb{A}+b) $ is a Grassmannian of \emph{linear} subspaces since all affine subspaces containing $\mathbb{A}+b$ can be shifted back to the origin by the vector $b$. In the case of $\Psi_{-}(\mathbb{B}+c)$, shifting $\mathbb{B}+c$ back to the origin by $c$ and then taking all affine subspaces contained in $\mathbb{B}$ still gives a Grassmannian of \emph{affine} subspaces. As a sanity check, note that the dimensions in Proposition~\ref{prop:OmegaGraff} agree with their values given by  Theorem~\ref{thm:dim}  with respect to the affine flags \eqref{eqn:flagdescription1} and \eqref{eqn:flagdescription2}.

We also have the following analogue of Proposition~\ref{prop:rav} that allows us to regard $\Psi_{+}(\mathbb{A} + b)$, $\Psi_{-}(\mathbb{B} + c)$ as subsets of $n \times (n+1)$ matrices.
\begin{proposition}
The affine Schubert varieties $\Psi_{+}(\mathbb{A} + b)$ and $\Psi_{-}(\mathbb{B} + c)$ are isomorphic to real affine algebraic varieties in $ \mathbb{R}^{n \times (n+1)}$ given by
\begin{align*}
\Psi_{+}(\mathbb{A} + b) &\cong \{[P , d] \in \mathbb{R}^{n \times (n+1)} : P^\tp  = P^2 = P,\; Pd = 0,\; \tr(P) = l,\; j(\mathbb{A} +b) \subseteq \im([P,d])\}, \\
\Psi_{-}(\mathbb{B} + c) &\cong \{[P , d] \in \mathbb{R}^{n \times (n+1)} : P^\tp  = P^2 = P,\; Pd = 0,\; \tr(P) = k,\; \im([P,d]) \subseteq j(\mathbb{B} + c ) \}.
\end{align*}
\end{proposition}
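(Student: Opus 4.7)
The plan is to derive both characterizations directly from Proposition~\ref{prop:rav}, which already identifies $\Graff(l,n)$ and $\Graff(k,n)$ with real affine algebraic varieties of $n\times(n+1)$ matrices $[P,d]$ satisfying $P^{\tp}=P^{2}=P$, $Pd=0$, and $\tr(P)$ equal to $l$ or $k$, respectively. Under that identification, $[P,d]$ encodes the affine subspace $\im(P)+d$ with $\im(P)$ a linear $l$- (or $k$-)plane and $d\in\im(P)^{\perp}$. The affine Schubert varieties $\Psi_{+}(\mathbb{A}+b)$ and $\Psi_{-}(\mathbb{B}+c)$ are, by Definition~\ref{def:Psi+-}, the subsets of $\Graff(l,n)$ and $\Graff(k,n)$ cut out by the containment conditions $\mathbb{A}+b\subseteq \im(P)+d$ and $\im(P)+d\subseteq\mathbb{B}+c$, respectively; so it suffices to show that these conditions take the stated algebraic form.

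Next I would translate each containment into a linear inclusion in $\mathbb{R}^{n+1}$. By Theorem~\ref{thm:alg}, the map $j:\Graff(\cdot,n)\to\Gr(\cdot+1,n+1)$ is an embedding that preserves containment, so $\mathbb{U}\subseteq\mathbb{V}$ in $\Graff$ if and only if $j(\mathbb{U})\subseteq j(\mathbb{V})$ in $\Gr(\cdot+1,n+1)$. Interpreting $\im([P,d])$ as the linear $(l+1)$- or $(k+1)$-dimensional subspace $j(\im(P)+d)\subseteq\mathbb{R}^{n+1}$ --- concretely, the column span of the lifted matrix $\begin{bsmallmatrix} P & d\\ 0 & 1\end{bsmallmatrix}\in\mathbb{R}^{(n+1)\times(n+1)}$ --- the containment conditions in Definition~\ref{def:Psi+-} translate verbatim into $j(\mathbb{A}+b)\subseteq \im([P,d])$ and $\im([P,d])\subseteq j(\mathbb{B}+c)$, yielding the set-theoretic descriptions claimed.

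To verify that the resulting subsets are real affine algebraic varieties, note that inclusion of linear subspaces of $\mathbb{R}^{n+1}$ is a polynomial condition on the entries of the defining matrices. Fixing bases of $j(\mathbb{A}+b)$ (respectively $j(\mathbb{B}+c)$), each inclusion becomes a rank condition on a suitable augmented matrix, equivalent to the vanishing of certain minors, which are polynomials in the entries of $P$ and $d$. Together with the polynomial equations $P^{\tp}=P$, $P^{2}=P$, $Pd=0$, and $\tr(P)=l$ (or $k$) that already cut out $\Graff(l,n)$ (or $\Graff(k,n)$) in Proposition~\ref{prop:rav}, the subsets $\Psi_{+}(\mathbb{A}+b)$ and $\Psi_{-}(\mathbb{B}+c)$ are thereby realized as real affine algebraic subvarieties of $\mathbb{R}^{n\times(n+1)}$.

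The main obstacle, such as it is, lies in pinning down the correct interpretation of $\im([P,d])$ as a subspace of $\mathbb{R}^{n+1}$ rather than of $\mathbb{R}^{n}$: the raw columns of $[P,d]$ span only an $(l+1)$-dimensional subspace of $\mathbb{R}^{n}$, while the statement compares against $j(\mathbb{A}+b)\subseteq\mathbb{R}^{n+1}$, so one must lift via the extra coordinate used to define $j$ in \eqref{eq:j}. Once this identification is settled, the remainder of the argument is direct bookkeeping with polynomial conditions.
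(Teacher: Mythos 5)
The paper states this proposition without any proof, evidently regarding it as a routine consequence of Proposition~\ref{prop:rav} and Definition~\ref{def:Psi+-}; your argument supplies exactly that verification and is correct. In particular, you rightly identify and resolve the one genuine subtlety --- that $\im([P,d])$ must be read as the lifted subspace $j(\im(P)+d)\subseteq\mathbb{R}^{n+1}$, i.e.\ the column span of $\begin{bsmallmatrix} P & d\\ 0 & 1\end{bsmallmatrix}$, rather than a subspace of $\mathbb{R}^{n}$ --- and the remaining steps (containment is preserved and reflected by the embedding $j$, and subspace inclusion is cut out by vanishing of minors, hence polynomial in the entries of $P$ and $d$) are sound.
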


\section{Algebraic topology of the affine Grassmannian}\label{sec:top}

We will determine the homotopy groups and (co)homology groups/rings of $\Graff(k,n)$. With this in mind, we begin by proving yet another characterization of $\Graff(k,n)$, namely, it is a vector bundle  --- in fact it is the universal quotient bundle of $\Gr(k,n)$.

Recall that if $S$ is a subbundle of a vector bundle $E$ on a manifold $M$, then $Q$ is called the \emph{quotient bundle} on $M$ of $E$ by $S$ if there is a short exact sequence of vector bundles
\begin{equation}\label{eq:quotient}
0 \to S \to  E \to Q \to 0.
\end{equation}
Recall also that the \emph{tautological bundle} over  $\Gr(k,n)$ is the vector bundle whose fiber over $\mathbb{A} \in \Gr(k,n)$ is simply $\mathbb{A}$ itself. One may view this as a subbundle of the  \emph{trivial vector bundle} $\Gr(k,n)\times \mathbb{R}^n$. If $S$ is the tautological bundle and $E$ is the trivial bundle in \eqref{eq:quotient}, then the quotient bundle $Q$ is called the \emph{universal quotient bundle} of $\Gr(k,n)$ \cite{GH, Milnor}.
\begin{theorem}\label{thm:bundle}
\begin{enumerate}[\upshape (i)]
\item\label{it:vb}  $\Graff(k,n)$  is a rank-$(n-k)$ vector bundle over $\Gr(k,n)$  with bundle projection $\tau : \Graff(k,n)\to \Gr(k,n)$, the deaffine map  in \eqref{eq:deaff}.
\item\label{it:uqb} $\Graff(k,n)$ is the universal quotient bundle of $\Gr(k,n)$,
\begin{equation}\label{eq:uqb}
0\to S \to \Gr(k,n)\times \mathbb{R}^n\to \Graff(k,n)\to 0
\end{equation}
where $S$ is the tautological bundle.
\end{enumerate}
\end{theorem}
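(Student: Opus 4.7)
The plan is to deduce (i) from (ii): once the short exact sequence \eqref{eq:uqb} is established with $\tau$ as the bundle projection, $\Graff(k,n)$ inherits the structure of a rank-$(n-k)$ vector bundle over $\Gr(k,n)$ automatically, as the quotient of a rank-$n$ trivial bundle by the rank-$k$ tautological subbundle.

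For (ii), I would construct the sequence explicitly by defining
\[
\phi : \Gr(k,n) \times \mathbb{R}^n \to \Graff(k,n), \qquad (\mathbb{A}, b) \mapsto \mathbb{A} + b.
\]
This is a surjective map of bundles over $\Gr(k,n)$, in the sense that $\tau \circ \phi$ equals the first projection $\pi_1$. The key step is to endow each fiber $\tau^{-1}(\mathbb{A}) = \{\mathbb{A} + b : b \in \mathbb{R}^n\}$ with a canonical vector space structure isomorphic to $\mathbb{R}^n/\mathbb{A}$: the operations $(\mathbb{A}+b) + (\mathbb{A}+b') \coloneqq \mathbb{A}+(b+b')$ and $\lambda(\mathbb{A}+b) \coloneqq \mathbb{A}+\lambda b$ are well-defined because distinct displacement representatives of the same affine subspace differ by an element of $\mathbb{A}$, a relation preserved under linear combinations; the zero of the fiber is $\mathbb{A}$ itself. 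With respect to this structure, $\phi|_{\{\mathbb{A}\} \times \mathbb{R}^n}$ is exactly the canonical quotient map $\mathbb{R}^n \to \mathbb{R}^n/\mathbb{A}$, whose kernel is $\mathbb{A}$; globally the fiberwise kernel of $\phi$ is precisely the tautological bundle $S$, giving \eqref{eq:uqb}.

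It remains to verify that the fiberwise linear algebraic data above assembles into a smooth vector bundle whose underlying manifold structure matches the one on $\Graff(k,n)$ from Proposition~\ref{prop:smooth}. This is transparent from the local charts in that proof: in the chart where the leading $k \times k$ minor of $[A,b]$ is nonzero, each $\mathbb{A} + b$ is uniquely described by a pair
$(\hat{X}_{k+1:n,\,1:k}, \hat{y}_{k+1:n}) \in \mathbb{R}^{(n-k) \times k} \times \mathbb{R}^{n-k}$,
where the first factor is the familiar affine chart on $\Gr(k,n)$ around $\mathbb{A}$, and in these coordinates the deaffine map $\tau$ forgets the second factor. Thus each chart of Proposition~\ref{prop:smooth} is already a product chart exhibiting local triviality of $\tau$ with fiber $\mathbb{R}^{n-k}$.

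The main obstacle will be to check that the linear structure described above on $\tau^{-1}(\mathbb{A})$ agrees with the linear structure inherited from the local trivializations, i.e.\ that $\hat{y}_{k+1:n}$ depends linearly on the coset $b+\mathbb{A}$. This reduces to the elementary observation that, for $\mathbb{A}$ the column span of $\begin{bsmallmatrix} I_k \\ \hat{X} \end{bsmallmatrix}$, the subspace $\{v \in \mathbb{R}^n : v_1 = \cdots = v_k = 0\}$ is a linear complement to $\mathbb{A}$, so the chart-normalized displacement with vanishing top block provides a canonical linear representative of the coset in $\mathbb{R}^n/\mathbb{A}$. Once this is settled, linearity and smoothness of the transition maps between overlapping product charts follow from linearity of the corresponding transitions on $\Gr(k,n)$, and both (i) and (ii) are established.
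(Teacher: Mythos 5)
Your proposal is correct and follows essentially the same route as the paper: the paper's proof likewise identifies the fiber $\tau^{-1}(\mathbb{A})$ with $\mathbb{R}^n/\mathbb{A}$, obtains local trivializations from the charts of Proposition~\ref{prop:smooth}, and exhibits the surjective bundle map $q:\Gr(k,n)\times\mathbb{R}^n\to\Graff(k,n)$, $(\mathbb{A},b)\mapsto\mathbb{A}+b$, whose kernel is the tautological bundle. Your write-up simply supplies more of the routine verifications (the well-definedness of the fiberwise linear structure and its compatibility with the chart-induced one) that the paper leaves implicit.
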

\begin{proof}
In affine coordinates, the deaffine map 
$\tau: \Graff(k,n)\to \Gr(k,n)$, $\mathbb{A}+b \mapsto \mathbb{A}$ takes the form 
$\tau ([ a_1,\dots, a_k, b_0 ])= [ a_1,\dots, a_k]$
where $a_i$'s and $b_0$ are chosen as in the proof of Proposition~\ref{prop:smooth}. Notice that the fiber $\tau^{-1}(\mathbb{A})$ for $\mathbb{A} \in \Gr(k,n)$ is simply $\mathbb{R}^n/\mathbb{A}$, a linear subspace of dimension $n-k$. Local trivializations of $\Graff(k,n)$ are obtained from local charts of $\Gr(k,n)$ by construction. Hence $\Graff(k,n)$ is a vector bundle over $\Gr(k,n)$. Moreover we have 
$q:\Gr(k,n)\times \mathbb{R}^n \to \Graff(k,n)$, $(\mathbb{A},b) \mapsto \mathbb{A}+b$.
It is straightforward to check that $q$ is a surjective bundle map and the kernel of $q$ is the tautological vector bundle $S$ over $\Gr(k,n)$, i.e., we obtain the exact sequence in \eqref{eq:uqb}.
This shows that $\Graff(k,n)$ is the universal quotient bundle.
\end{proof}
Throughout this section, we write $\mathbb{Z}_2 \coloneqq \mathbb{Z}/2\mathbb{Z}$.

\subsection{Homotopy of $\Graff(k,n)$}

When $\Graff(k,n)$ is regarded as a vector bundle on $\Gr(k,n)$ as in Theorem~\ref{thm:bundle}\eqref{it:vb}, the base space $\Gr(k,n)$ is homeomorphic to the zero section, which is a strong deformation retract of $\Graff(k,n)$. Hence $\Gr(k,n)$ and $\Graff(k,n)$ have the same homotopy type and so 
\[
\pi_r(\Graff(k,n)) \simeq \pi_r(\Gr(k,n)),\quad r\in \mathbb{N}.
\]
From the list of homotopy groups of $\Gr(k,n)$  in \cite[Section 10.8]{FV2013}, we obtain those of $\Graff(k,n)$. 
\begin{proposition}\label{prop:homotopy groups}
$\Graff(k,n)$ is homotopy equivalent to $\Gr(k,n)$. Therefore 
\begin{enumerate}[\upshape (i)]
\item for $n \ge k+ 2$ and $0 < k < n/2$,
\[
\pi_1(\Graff(k,n)) \simeq
\begin{cases}
\mathbb{Z} & \text{if } k =1, n =2,\\
\mathbb{Z}_2 & \text{otherwise};
\end{cases}
\]

\item for $0 \le  k < n/2$ and $2 \le r < n- 2k$,
\[
\pi_r(\Graff(k,n)) \simeq 
\begin{cases}
\mathbb{Z}&\text{if } r=0,4 \bmod 8,\\
\mathbb{Z}_2&\text{if } r=1,2 \bmod 8,\\
0&\text{if } r=3,5,6,7 \bmod 8.
\end{cases}
\]
\end{enumerate}
\end{proposition}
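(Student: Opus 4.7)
The plan is to use Theorem~\ref{thm:bundle}\eqref{it:vb}, which realizes $\Graff(k,n)$ as a rank-$(n-k)$ vector bundle over $\Gr(k,n)$ with bundle projection $\tau$. Every vector bundle is homotopy equivalent to its base space via the zero section: the map $\sigma : \Gr(k,n)\to \Graff(k,n)$, $\mathbb{A}\mapsto \mathbb{A}+0$, is a continuous section of $\tau$, and the fiberwise scaling
\[
H : \Graff(k,n)\times [0,1]\to \Graff(k,n), \qquad (\mathbb{A}+b,\,t)\mapsto \mathbb{A}+(1-t)b,
\]
is a strong deformation retraction of $\Graff(k,n)$ onto $\sigma\bigl(\Gr(k,n)\bigr)$. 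Hence $\sigma$ and $\tau$ are mutually inverse homotopy equivalences, which proves the first assertion and, in particular, yields $\pi_r(\Graff(k,n))\cong \pi_r(\Gr(k,n))$ for every $r\ge 0$.

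Statements (i) and (ii) then reduce to looking up the homotopy groups of the real Grassmannian, for which I would simply quote \cite[Section~10.8]{FV2013}. For orientation, the derivation there begins from the identification \eqref{eq:grass} and feeds it into the long exact sequence of the fibration $\O(k)\times \O(n-k)\to \O(n)\to \Gr(k,n)$. The $\pi_1$ computation in (i) separates the exceptional case $\Gr(1,2)\cong \mathbb{RP}^1\cong S^1$, where $\pi_1\cong \mathbb{Z}$, from the generic case, where the oriented double cover of $\Gr(k,n)$ is simply connected and so yields $\pi_1\cong \mathbb{Z}_2$. In (ii) the hypothesis $r<n-2k$ places one in the stable range where $\pi_{r-1}\bigl(\O(k)\bigr)$ and $\pi_{r-1}\bigl(\O(n-k)\bigr)$ both agree with $\pi_{r-1}(\O)$, and Bott periodicity of the stable orthogonal group delivers the eight-fold pattern $\mathbb{Z},\mathbb{Z}_2,\mathbb{Z}_2,0,\mathbb{Z},0,0,0$ recorded in the proposition.

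There is no substantive obstacle: the topological content — homotopy invariance of a vector bundle relative to its base — is handed to us by Theorem~\ref{thm:bundle}\eqref{it:vb}, and (i)–(ii) are direct transcriptions from the literature. The only care required is in handling the $(k,n)=(1,2)$ exception in (i) and in verifying that the ranges $n\ge k+2$, $0<k<n/2$, and $r<n-2k$ are precisely those under which the cited formulas apply without further low-dimensional corrections.
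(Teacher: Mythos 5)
Your proposal is correct and follows essentially the same route as the paper: both invoke Theorem~\ref{thm:bundle}(i) to view $\tau:\Graff(k,n)\to\Gr(k,n)$ as a vector bundle, deformation retract onto the zero section to get the homotopy equivalence, and then quote the homotopy groups of $\Gr(k,n)$ from \cite[Section~10.8]{FV2013}. Your explicit fiberwise-scaling homotopy $(\mathbb{A}+b,t)\mapsto\mathbb{A}+(1-t)b$ is well defined (since $b-b'\in\mathbb{A}$ implies $(1-t)(b-b')\in\mathbb{A}$) and merely makes concrete what the paper asserts in one line.
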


Since the deaffine map $\tau$ in \eqref{eq:deaff} is a bundle projection by Theorem~\ref{thm:bundle}\eqref{it:vb}, it is straightforward to take direct limits in \eqref{eq:comm} and extend Proposition~\ref{prop:homotopy groups} to the infinite Grassmannian via the commutative diagram \eqref{diagram:infinite Stiefel}. This also shows that $\Graff(k,\infty)$ is a \emph{classifying space} \cite{Husemoller1994}.
\begin{corollary}\label{cor:homoinfty}
$\Graff(k,\infty)$ is homotopy equivalent to $\Gr(k,\infty)$. Therefore
\[
\pi_1(\Graff(k,\infty)) \simeq \mathbb{Z}_2;
\]
and for $r \ge 2$,
\[
\pi_r (\Graff(k,\infty)) \simeq 
\begin{cases}
\mathbb{Z}&\text{if } r=0,4 \bmod 8,\\
\mathbb{Z}_2&\text{if } r=1,2 \bmod 8,\\
0&\text{if } r=3,5,6,7 \bmod 8.
\end{cases}
\]
Moreover, $\Graff(k,\infty)$ is the classifying space of $\O(n)$ and $\GL(n)$ with total space $\Vaff(k,\infty)$.
\end{corollary}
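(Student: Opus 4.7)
The plan is to attack this in three stages: first the homotopy equivalence $\Graff(k,\infty) \simeq \Gr(k,\infty)$, then the homotopy group computation, then the classifying space identification.

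For the homotopy equivalence, I will upgrade the argument given just before Proposition~\ref{prop:homotopy groups} to the infinite setting. By Theorem~\ref{thm:bundle}\eqref{it:vb}, each $\tau : \Graff(k,n) \to \Gr(k,n)$ is a rank-$(n-k)$ vector bundle, so the zero-section inclusion is a homotopy equivalence with straight-line deformation retraction $H_t(\mathbb{A} + b_0) = \mathbb{A} + (1-t)b_0$ in orthogonal affine coordinates. The key observation is that these retractions are compatible with the inclusions $\iota_n : \Graff(k,n) \hookrightarrow \Graff(k,n+1)$ in \eqref{eqn:direct system}, because $\iota_n$ sends $\mathbb{A}+b_0$ to $(\mathbb{A}\oplus 0) + (b_0,0)^\tp$ and the orthogonal decomposition is preserved. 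Passing to the direct limit in \eqref{eq:comm} therefore yields a deformation retraction $\Graff(k,\infty) \to \Gr(k,\infty)$ realized by the induced deaffine map.

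For the homotopy groups, I will use that all spaces in sight are CW complexes (as direct limits of homogeneous spaces) and that $\pi_r$ commutes with the directed colimits formed from closed cofibrations, so
\[
\pi_r(\Graff(k,\infty)) \;\cong\; \pi_r(\Gr(k,\infty)) \;=\; \varinjlim_n \pi_r(\Gr(k,n)).
\]
By Proposition~\ref{prop:homotopy groups}\,(ii), for $r \ge 2$ and any $n > r + 2k$ the group $\pi_r(\Gr(k,n))$ already takes its stable Bott-periodic value, so the colimit equals that value; Proposition~\ref{prop:homotopy groups}\,(i) with $n \to \infty$ (and $k \ge 1$, so we are away from the anomalous case $k=1, n=2$) gives $\pi_1 \cong \mathbb{Z}_2$.

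For the classifying-space statement, I will invoke Proposition~\ref{prop:homo}: $\Vaff(k,n) \to \Graff(k,n)$ is a principal $\E(k)$-bundle, and this is compatible with the directed systems in \eqref{diagram:infinite Stiefel}, producing a principal $\E(k)$-bundle $\Vaff(k,\infty) \to \Graff(k,\infty)$. The total space $\Vaff(k,\infty) = \V(k,\infty) \times \mathbb{R}^\infty$ is contractible because $\pi_r(\V(k,n)) = 0$ for $r < n-k$ makes $\V(k,\infty)$ weakly contractible, hence contractible as a CW complex by Whitehead, while $\mathbb{R}^\infty$ is trivially contractible. A principal $G$-bundle with contractible total space over a paracompact base is universal, so $\Graff(k,\infty) \simeq B\E(k) \simeq B\O(k)$ (the Euclidean factor in $\E(k) = \O(k) \ltimes \mathbb{R}^k$ is contractible). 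The $\GL(k)$ statement follows identically from the noncompact analogue $\Staff(k,\infty) \to \Graff(k,\infty)$, using $\GA(k) \simeq \GL(k)$ in the same way.

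The main obstacle I anticipate is bookkeeping around the direct limits: verifying that the bundle structures, the deformation retractions, and the principal bundle actions really do commute with the inclusions $\iota_n$ well enough that the colimit inherits them, and that the colimit topology makes the relevant $\pi_r$-commutation and Whitehead arguments valid. None of these is deep, but each needs the explicit block-matrix description of $\iota_n$ to check compatibility cleanly. A minor caveat worth flagging is the apparent typo ``$\O(n)$ and $\GL(n)$'' in the statement: the natural reading consistent with the rest of the paper and with the Stiefel fibration above is $\O(k)$ and $\GL(k)$.
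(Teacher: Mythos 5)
Your proposal is correct and follows essentially the same route as the paper, whose entire argument is the one-line remark preceding the corollary: take direct limits of the bundle projection $\tau$ in \eqref{eq:comm} and \eqref{diagram:infinite Stiefel} to transport Proposition~\ref{prop:homotopy groups} to $\Graff(k,\infty)$, with the classifying-space claim following from the contractibility of the total space $\Vaff(k,\infty)$. You have merely supplied the standard details (compatibility of the retractions with $\iota_n$, $\pi_r$ commuting with the colimit, universality of a principal bundle with contractible total space) that the paper leaves implicit, and your reading of ``$\O(n)$ and $\GL(n)$'' as ``$\O(k)$ and $\GL(k)$'' is the one consistent with the rest of the paper.
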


\subsection{Homology and cohomology of $\Graff(k,n)$}\label{sec:coho}

We show that the affine Schubert varieties in Section~\ref{sec:schubert} play a role for the  (co)homology of $\Graff(k,n)$ similar to that of Schubert varieties for $\Gr(k,n)$.

Let $\mathbb{A}_1 \subseteq \dots \subseteq \mathbb{A}_k$ be a flag in $\mathbb{R}^n$. For any $b \in \mathbb{R}^n$, the deaffine map $\tau : \Graff(k,n) \to \Gr(k,n)$ in \eqref{eq:deaff}, when restricted to $\Psi(\mathbb{A}_1+b,\dots, \mathbb{A}_k + b)$, defines a map
\[
\tau_b :\Psi(\mathbb{A}_1 + b,\dots, \mathbb{A}_k+ b)  \to  \Omega(\mathbb{A}_1,\dots,\mathbb{A}_k), \quad \mathbb{A} + b \mapsto \mathbb{A}.
\]
For any fixed $b \in \mathbb{R}^n$, it has a right inverse
\[
s_b :\Omega(\mathbb{A}_1,\dots,\mathbb{A}_k) \to \Psi(\mathbb{A}_1 + b,\dots, \mathbb{A}_k+ b), \quad \mathbb{A} \mapsto \mathbb{A} + b.
\]
\begin{lemma}\label{lemma:Schubert1}
Let $\mathbb{A}_1 \subseteq \dots \subseteq \mathbb{A}_k$ be a flag in $\mathbb{R}^n$ and $b \in \mathbb{R}^n$. Then the following diagram commutes:
\begin{equation}\label{diag:schubert}
     \begin{tikzcd}
   \Psi(\mathbb{A}_1 + b,\dots, \mathbb{A}_k+ b) \arrow[hookrightarrow]{r}  \arrow{dr}[anchor=center,xshift=0.5ex,yshift = 1ex]{\tau_b}  &  \tau^{-1}\bigl(\Omega(\mathbb{A}_1,\dots,\mathbb{A}_k)\bigr) \arrow[hookrightarrow]{r}    \arrow{d}{\tau} & \Graff(k,n)   \arrow{d}{\tau}  \\
   &  \Omega(\mathbb{A}_1,\dots,\mathbb{A}_k) \arrow[bend left=10]{ul}[anchor=center,xshift=-2ex]{s_b} \arrow[hookrightarrow]{r}  &  \Gr(k,n)  
     \end{tikzcd}
\end{equation}
\end{lemma}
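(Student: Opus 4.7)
The diagram reduces to three independent verifications: that the top-row inclusion $\Psi(\mathbb{A}_1 + b,\dots, \mathbb{A}_k + b) \hookrightarrow \tau^{-1}\bigl(\Omega(\mathbb{A}_1,\dots,\mathbb{A}_k)\bigr)$ is well defined, that $s_b$ actually lands inside $\Psi(\mathbb{A}_1 + b,\dots, \mathbb{A}_k + b)$, and that the triangles and square that appear then commute. All of these should follow from a single translation-invariance observation, namely, that for any point $v$ in an affine subspace $\mathbb{X} + y$, the translate $(\mathbb{X} + y) - v$ is the underlying linear subspace $\mathbb{X}$.

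For the inclusion, I would take $\mathbb{B} + c \in \Psi(\mathbb{A}_1 + b,\dots, \mathbb{A}_k + b)$ and show $\mathbb{B} = \tau(\mathbb{B}+c) \in \Omega(\mathbb{A}_1,\dots,\mathbb{A}_k)$. For each $j$, the hypothesis gives some $v_j \in (\mathbb{B}+c)\cap(\mathbb{A}_j+b)$ with $\dim\bigl((\mathbb{B}+c)\cap(\mathbb{A}_j+b)\bigr) \ge j$; translating by $-v_j$ yields a linear subspace contained in both $\mathbb{B}$ and $\mathbb{A}_j$ of the same dimension, so $\dim(\mathbb{B} \cap \mathbb{A}_j) \ge j$ for every $j$, as required. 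Conversely, for $s_b$, given $\mathbb{A} \in \Omega(\mathbb{A}_1,\dots,\mathbb{A}_k)$ I would select $j$ linearly independent vectors $v_1,\dots,v_j \in \mathbb{A} \cap \mathbb{A}_j$; then $b, b + v_1, \dots, b + v_j$ are $j+1$ affinely independent points lying simultaneously in $\mathbb{A}+b$ and $\mathbb{A}_j + b$, so $\dim\bigl((\mathbb{A}+b)\cap(\mathbb{A}_j+b)\bigr) \ge j$, giving $s_b(\mathbb{A}) = \mathbb{A} + b \in \Psi(\mathbb{A}_1+b,\dots,\mathbb{A}_k+b)$.

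Commutativity is then essentially tautological. The square commutes because $\tau$ is defined on all of $\Graff(k,n)$ and is simply restricted to the subvariety $\tau^{-1}\bigl(\Omega(\mathbb{A}_1,\dots,\mathbb{A}_k)\bigr)$. The upper triangle commutes because $\tau_b$ is by definition the restriction of $\tau$ to $\Psi(\mathbb{A}_1+b,\dots,\mathbb{A}_k+b)$. The lower curved triangle commutes since $\tau_b \circ s_b(\mathbb{A}) = \tau_b(\mathbb{A}+b) = \mathbb{A}$, which is the identity on $\Omega(\mathbb{A}_1,\dots,\mathbb{A}_k)$.

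None of this is genuinely hard; the only subtle point is keeping the linear versus affine notions of dimension straight in the first step, but once one fixes a basepoint in each affine intersection, the argument reduces to the corresponding statement for linear subspaces. The lemma therefore serves mainly as a bookkeeping device, setting up the fact that $\tau$ restricted to an affine Schubert variety is a trivial retract onto the corresponding linear Schubert variety, which is what is needed for the (co)homology computations that follow.
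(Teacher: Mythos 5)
Your proof is correct and follows essentially the same route as the paper's: the only substantive point is the inclusion $\Psi(\mathbb{A}_1+b,\dots,\mathbb{A}_k+b)\subseteq\tau^{-1}\bigl(\Omega(\mathbb{A}_1,\dots,\mathbb{A}_k)\bigr)$, which the paper also establishes by translating by a basepoint of the affine intersection (it uses a single $x\in(\mathbb{A}_1+b)\cap(\mathbb{B}+c)$ valid for all $j$ via the flag nesting, where you use one $v_j$ per $j$ --- an immaterial difference). Your extra check that $s_b$ lands in the affine Schubert variety is a harmless addition the paper absorbs into the definition of $s_b$ preceding the lemma.
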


\begin{proof}
The only point in \eqref{diag:schubert} that needs verification is the inclusion $\Psi(\mathbb{A}_1+b,\dots, \mathbb{A}_k + b) \subseteq \tau^{-1}\bigl(\Omega(\mathbb{A}_1,\dots, \mathbb{A}_k)\bigr)$.  Let $\mathbb{B}+c \in \Psi(\mathbb{A}_1+b,\dots, \mathbb{A}_k + b)$. Then $\dim  (\mathbb{A}_j + b) \cap (\mathbb{B} + c) \ge j$, $ j=1,\dots, k$. We need to show that $\dim \mathbb{A}_j \cap \mathbb{B}\ge j$, $j=1,\dots, k$. By the same argument that led to \eqref{eq:x}, we may choose an $x\in \mathbb{R}^n$ so that
\begin{gather*}
\mathbb{A}_j + b = \mathbb{A}_j + x,\quad \mathbb{B} + c = \mathbb{B} + x,\qquad j=1,\dots,k.
\shortintertext{Therefore}
\dim (\mathbb{A}_j + x) \cap (\mathbb{B} + x) =\dim (\mathbb{A}_j + b) \cap (\mathbb{B} + c) \ge j
\end{gather*}
and thus $\dim \mathbb{A}_j \cap \mathbb{B} \ge j$, $j=1,\dots, k$.
\end{proof}

To obtain a more precise relation between $\Psi(\mathbb{A}_1 + b,\dots, \mathbb{A}_k + b)$ and $\tau^{-1}\bigl(\Omega(\mathbb{A}_1,\dots, \mathbb{A}_k)\bigr)$, we show that the fibers of $\tau_b$ are contractible.
\begin{lemma}\label{lem:Schubert2}
Let $\mathbb{A}_1 \subseteq \dots \subseteq \mathbb{A}_k$ be a flag in $\mathbb{R}^n$, $\mathbb{B}\in \Omega(\mathbb{A}_1,\dots, \mathbb{A}_k)$, and $b \in \mathbb{R}^n$. Then
\[
\tau_b^{-1}(\mathbb{B}) = \{ \mathbb{B} + c\in \Graff(k,n): \dim (\mathbb{B} + c) \cap (\mathbb{A}_j + b) \ge j,\; j=1,\dots, k \}
\]
is convex  and therefore contractible.
\end{lemma}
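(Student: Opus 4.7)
The plan is to describe $\tau_b^{-1}(\mathbb{B})$ explicitly as a coset inside the fiber $\tau^{-1}(\mathbb{B})$, which has a natural affine-space structure, and then read off convexity directly.

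First I would identify the fiber $\tau^{-1}(\mathbb{B})$ of the deaffine map with the quotient vector space $\mathbb{R}^n/\mathbb{B}$ via the bijection $\mathbb{B}+c \leftrightarrow c+\mathbb{B}$; this is just a restatement of the coset interpretation of affine subspaces from Section~\ref{sec:alg}. Convexity of a subset of $\tau^{-1}(\mathbb{B})$ will mean convexity inside $\mathbb{R}^n/\mathbb{B}$, i.e., closure under operations of the form $\mathbb{B}+(\lambda c_1 +(1-\lambda)c_2)$ for $\lambda \in [0,1]$.

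The next step is to rewrite each defining inequality. I would observe that $(\mathbb{B}+c)\cap (\mathbb{A}_j+b)\neq \emptyset$ if and only if $b-c\in \mathbb{B}+\mathbb{A}_j$, because the intersection is nonempty precisely when $c-b$ can be written as $u-v$ with $u\in \mathbb{B}$, $v\in \mathbb{A}_j$. Moreover, whenever the intersection is nonempty, picking any point $p$ in it shows that it equals $p+(\mathbb{B}\cap \mathbb{A}_j)$, so its dimension is exactly $\dim(\mathbb{B}\cap \mathbb{A}_j)$. Since $\mathbb{B}\in \Omega(\mathbb{A}_1,\dots,\mathbb{A}_k)$ by hypothesis, $\dim(\mathbb{B}\cap \mathbb{A}_j)\ge j$ automatically, so the inequality $\dim(\mathbb{B}+c)\cap (\mathbb{A}_j+b)\ge j$ collapses to the single non-emptiness condition $b-c\in \mathbb{B}+\mathbb{A}_j$.

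Finally, since $\mathbb{A}_1\subseteq \cdots \subseteq \mathbb{A}_k$ gives $\mathbb{B}+\mathbb{A}_1\subseteq \cdots \subseteq \mathbb{B}+\mathbb{A}_k$, the intersection of all these conditions is just $b-c\in \mathbb{B}+\mathbb{A}_1$. Therefore
\[
\tau_b^{-1}(\mathbb{B}) = \{\mathbb{B}+c : c\in b+\mathbb{B}+\mathbb{A}_1\},
\]
which under the identification with $\mathbb{R}^n/\mathbb{B}$ is the affine subspace $(b+\mathbb{B}+\mathbb{A}_1)/\mathbb{B}$. Affine subspaces of a vector space are convex (and star-shaped about any point, hence contractible via the straight-line homotopy), yielding both claims. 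The only subtle point is verifying that once nonempty, the dimension of the intersection is forced to be $\dim(\mathbb{B}\cap \mathbb{A}_j)$ and hence automatically at least $j$; this is what makes the defining inequalities collapse to affine-linear conditions and drives the whole argument.
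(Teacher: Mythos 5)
Your proof is correct, and it takes a cleaner route than the paper's while resting on the same underlying identification of the fiber $\tau^{-1}(\mathbb{B})$ with (a subset of) $\mathbb{R}^n/\mathbb{B}$. The paper defines $C(\mathbb{B},b)=\{c\in\mathbb{R}^n:\mathbb{B}+c\in\tau_b^{-1}(\mathbb{B})\}$, rewrites the cosets $\mathbb{B}+c$ and $\mathbb{A}_j+b$ as solution sets of linear systems $(I-BB^\tp)(y-c)=0$, $(I-A_jA_j^\tp)(y-b)=0$, and shows that a convex combination of two admissible pairs $(c_i,y_i)$ is again admissible; this directly exhibits only a \emph{single} solution $y_t$ for the combined system at $c_t$, so strictly speaking it too needs your key observation --- that a nonempty intersection of cosets of $\mathbb{B}$ and $\mathbb{A}_j$ is a coset of $\mathbb{B}\cap\mathbb{A}_j$ and hence automatically has dimension $\dim(\mathbb{B}\cap\mathbb{A}_j)\ge j$ because $\mathbb{B}\in\Omega(\mathbb{A}_1,\dots,\mathbb{A}_k)$ --- a point the paper leaves implicit. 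You make this explicit, note that the $k$ conditions then collapse (by the nesting $\mathbb{B}+\mathbb{A}_1\subseteq\cdots\subseteq\mathbb{B}+\mathbb{A}_k$) to the single condition $b-c\in\mathbb{B}+\mathbb{A}_1$, and thereby identify $\tau_b^{-1}(\mathbb{B})$ exactly as the affine subspace $(b+\mathbb{B}+\mathbb{A}_1)/\mathbb{B}$ of $\mathbb{R}^n/\mathbb{B}$. This buys a sharper conclusion (an explicit description of the fiber, from which convexity and contractibility via the straight-line homotopy are immediate) and avoids the projection-matrix computation entirely; the paper's formulation, on the other hand, is phrased in the matrix coordinates used elsewhere for computation. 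No gaps.
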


\begin{proof}
We first define an auxiliary set
\begin{align}
C(\mathbb{B},b)&\coloneqq \{ c \in \mathbb{R}^n : \mathbb{B} + c \in \tau_b^{-1}(\mathbb{B})\} \nonumber\\
&= \{ c \in \mathbb{R}^n :  \dim (\mathbb{B} + c) \cap (\mathbb{A}_j + b) \ge j,\; j=1,\dots,k \} \label{eq:CBb}
\end{align}
If $c\in C(\mathbb{B},b)$, then $c + b'\in C(\mathbb{B},b)$ for any $b'\in \mathbb{B}$. Moreover, $\mathbb{B} + c = \mathbb{B} + c'$ if and only if $c' - c \in \mathbb{B}$. So we have a homeomorphism
\begin{equation}\label{eqn:Schubert1}
C(\mathbb{B},b)/\mathbb{B}  \cong \tau_{b}^{-1}(\mathbb{B}),
\end{equation}
where the left-hand side is regarded as a subset of the quotient vector space $\mathbb{R}^n/\mathbb{B}$. So the convexity of $\tau_{b}^{-1}(\mathbb{B})$ would follow from the convexity of $C(\mathbb{B},b)$ in $\mathbb{R}^n$.

We  remind the reader that if $A \in \V(k,n)$ is an orthonormal basis for $\mathbb{A} \in \Gr(k,n)$, then
$\mathbb{A} = \im(A) = \ker(I - AA^\tp)$. Let $B \in \V(n,n-k)$ and $A_j \in \V(n - d_j, n)$ be orthonormal bases of $\mathbb{B}$ and $\mathbb{A}_j$ respectively, $j=1,\dots,k$. So
\begin{gather}
\mathbb{B} = \{ y \in \mathbb{R}^n : (I - BB^\tp )y = 0 \}, \quad \mathbb{A}_j = \{ y \in \mathbb{R}^n : (I - A_j A_j^\tp) y =0\}, \nonumber
\shortintertext{and so}
\mathbb{B} + c = \{ y \in \mathbb{R}^n : (I - BB^\tp )(y-c) = 0\}, \quad
\mathbb{A}_j + b = \{ y \in \mathbb{R}^n : (I - A_j A_j^\tp) (y-b) =0\},\label{eq:intersect}
\end{gather}
for $j =1,\dots,k$. Hence by \eqref{eq:CBb} and \eqref{eq:intersect},
\[
C(\mathbb{B},b) = \{ c \in \mathbb{R}^n : \text{solution space of \eqref{eq:intersect} has dimension} \ge j,\; j=1,\dots,k\}.
\]
With this characterization of $C(\mathbb{B},b)$, convexity is straightforward: Let $c_1,c_2\in C(\mathbb{B},b)$ and $y_1,y_2 \in \mathbb{R}^n$ be such that 
\begin{equation}\label{eq:als}
(I - BB^\tp)(y_i - c_i)=0,\quad (I - A_jA_j^\tp)(y_i - b) =0,\quad j=1,\dots,k, \quad  i=1,2.
\end{equation}
For any $t\in [0,1]$,  $c_t =t c_1 + (1-t) c_2$ and $y_t = ty_1 + (1-t)y_2$ clearly also satisfy \eqref{eq:als}.
\end{proof}

This leads us to the following relation between $\Psi(\mathbb{A}_1 + b,\dots, \mathbb{A}_k + b)$ and $\tau^{-1}\bigl(\Omega(\mathbb{A}_1,\dots, \mathbb{A}_k)\bigr)$.
\begin{theorem}\label{thm:Schubert2}
The image $s_b\bigl(\Omega(\mathbb{A}_1,\dots,\mathbb{A}_k)\bigr)$ is a strong deformation retract of $\Psi(\mathbb{A}_1 +b, \dots,\mathbb{A}_k + b)$. In particular, $\Psi(\mathbb{A}_1 + b,\dots, \mathbb{A}_k + b)$ is homotopy equivalent to $\tau^{-1}\bigl(\Omega(\mathbb{A}_1,\dots, \mathbb{A}_k)\bigr)$.
\end{theorem}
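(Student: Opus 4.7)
The plan is to construct an explicit strong deformation retraction of $\Psi(\mathbb{A}_1+b,\dots,\mathbb{A}_k+b)$ onto $s_b\bigl(\Omega(\mathbb{A}_1,\dots,\mathbb{A}_k)\bigr)$ by straight-line interpolating, in each fiber of $\tau_b$, between an arbitrary displacement vector and the preferred displacement $b$. Explicitly, I would define
\[
H : \Psi(\mathbb{A}_1+b,\dots,\mathbb{A}_k+b) \times [0,1] \to \Psi(\mathbb{A}_1+b,\dots,\mathbb{A}_k+b), \qquad H(\mathbb{B}+c,\, t) \coloneqq \mathbb{B} + \bigl((1-t)c + tb\bigr),
\]
and first verify that this is well defined independently of the coset representative $c$: replacing $c$ by $c + b_0$ with $b_0 \in \mathbb{B}$ shifts $(1-t)c + tb$ by $(1-t)b_0 \in \mathbb{B}$, so the two expressions represent the same element of $\Graff(k,n)$.

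The crux of the argument is checking that $H$ actually lands in $\Psi(\mathbb{A}_1+b,\dots,\mathbb{A}_k+b)$, which is where Lemma~\ref{lem:Schubert2} earns its keep. By Lemma~\ref{lemma:Schubert1} the linear part $\mathbb{B}$ lies in $\Omega(\mathbb{A}_1,\dots,\mathbb{A}_k)$, so both $c$ (by virtue of $\mathbb{B}+c \in \tau_b^{-1}(\mathbb{B})$) and $b$ (since $s_b(\mathbb{B}) = \mathbb{B}+b$ is in $\Psi$) belong to the set $C(\mathbb{B},b)$ of \eqref{eq:CBb}; its convexity --- the content of Lemma~\ref{lem:Schubert2} --- then forces $(1-t)c + tb \in C(\mathbb{B},b)$, so $H(\mathbb{B}+c,t) \in \tau_b^{-1}(\mathbb{B}) \subseteq \Psi$. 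Continuity of $H$ I would verify in the projection affine coordinates of Proposition~\ref{prop:rav}: under $\mathbb{B}+c \leftrightarrow [BB^\tp, (I-BB^\tp)c] = [P,d]$, the map becomes $(P,d,t) \mapsto [P, (1-t)d + t(I-P)b]$, which is patently continuous. The three defining properties of a strong deformation retract then fall out at once: $H(\cdot,0) = \mathrm{id}$; $H(\mathbb{B}+c,1) = \mathbb{B}+b = s_b(\tau_b(\mathbb{B}+c)) \in s_b\bigl(\Omega(\mathbb{A}_1,\dots,\mathbb{A}_k)\bigr)$; and $H(\mathbb{B}+b,t) = \mathbb{B}+b$ for every $\mathbb{B}+b$ already in the section.

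For the "in particular" clause, exactly the same formula on $\tau^{-1}\bigl(\Omega(\mathbb{A}_1,\dots,\mathbb{A}_k)\bigr) \times [0,1]$, now with $c \in \mathbb{R}^n$ unconstrained, defines a strong deformation retraction onto the same subspace $s_b\bigl(\Omega(\mathbb{A}_1,\dots,\mathbb{A}_k)\bigr)$; no convexity argument is needed here because the condition $\tau(\mathbb{B}+c') \in \Omega(\mathbb{A}_1,\dots,\mathbb{A}_k)$ depends only on $\mathbb{B}$. Transitivity of homotopy equivalence through the common retract $s_b\bigl(\Omega(\mathbb{A}_1,\dots,\mathbb{A}_k)\bigr)$ then yields $\Psi(\mathbb{A}_1+b,\dots,\mathbb{A}_k+b) \simeq \tau^{-1}\bigl(\Omega(\mathbb{A}_1,\dots,\mathbb{A}_k)\bigr)$. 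The only nonroutine step of the entire argument is confirming that the straight-line homotopy remains inside $\Psi$; everything else is bookkeeping.
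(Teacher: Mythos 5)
Your proof is correct and follows essentially the same route as the paper: both arguments rest on $s_b$ being a section of $\tau_b$ together with the convexity of the fibers $\tau_b^{-1}(\mathbb{B})$ established in Lemma~\ref{lem:Schubert2}. Your explicit straight-line homotopy $H(\mathbb{B}+c,t)=\mathbb{B}+\bigl((1-t)c+tb\bigr)$, checked in projection affine coordinates, in fact makes transparent the continuity across fibers that the paper's fiberwise statement leaves implicit, so if anything your write-up is the more complete version of the same argument.
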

\begin{proof}
$s_b:\Omega(\mathbb{A}_1,\dots, \mathbb{A}_k) \to \Psi(\mathbb{A}_1+b,\dots, \mathbb{A}_k + b)$ is a section of $\tau_b$, i.e., $\tau_b \circ s_b = 1$.  Hence it suffices to prove that the fiber $\tau_b^{-1}(\mathbb{B})$ deformation retracts to $s_b(\mathbb{B})$ for each $\mathbb{B}\in \Omega(\mathbb{A}_1,\dots, \mathbb{A}_k)$ but this is trivially true since $\tau_b^{-1}(\mathbb{B})$ is contractible by Lemma~\ref{lem:Schubert2}
\end{proof}

By virtue of Theorem~\ref{thm:Schubert2}, we deduce next that the affine Schubert varieties form a natural basis for the (co)homology groups of $\Graff(k,n)$. In this context, the (co)homology classes determined by affine Schubert varieties are called \emph{affine Schubert cycles}.
\begin{theorem}\label{thm:homology} 
Affine Schubert cycles form a basis for the (co)homology groups of an affine Grassmannian. In particular, 
\[
\H_i (\Graff(k,n),\mathbb{Z}_2)  \simeq \H^i(\Graff(k,n),\mathbb{Z}_2) \simeq \mathbb{Z}_2^{r_i},
\]
where $r_i$ is the number of partitions of the integer $i$ with at most $k$ parts.  We also have a graded ring isomorphism
\[
\H^\ast (\Graff(k,\infty),\mathbb{Z}_2) \simeq \mathbb{Z}_2 [x_1,\dots, x_k]^{\mathfrak{S}_k}.
\]
\end{theorem}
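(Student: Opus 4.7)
The plan is to reduce everything to the well-understood (co)homology of the ordinary Grassmannian via the vector bundle structure established in Theorem~\ref{thm:bundle}. Since $\tau:\Graff(k,n)\to\Gr(k,n)$ is a rank-$(n-k)$ vector bundle, its zero section is a strong deformation retract, so $\Graff(k,n)$ and $\Gr(k,n)$ have the same homotopy type. This immediately yields
\[
\H_i(\Graff(k,n),\mathbb{Z}_2)\simeq\H_i(\Gr(k,n),\mathbb{Z}_2)\simeq\mathbb{Z}_2^{r_i}
\]
and the analogous isomorphism on cohomology by invoking Fact~\ref{fact:schubert}\eqref{it:cyc}, where $r_i$ is the number of Schubert varieties of dimension $i$ in $\Gr(k,n)$; this is the number of partitions of $i$ with at most $k$ parts (each bounded by $n-k$, a constraint that becomes vacuous for large $n$).

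To identify this basis with affine Schubert cycles, I would apply Theorem~\ref{thm:Schubert2}: the section image $s_b\bigl(\Omega(\mathbb{A}_1,\dots,\mathbb{A}_k)\bigr)$ is a strong deformation retract of $\Psi(\mathbb{A}_1+b,\dots,\mathbb{A}_k+b)$, and $s_b$ maps $\Omega$ homeomorphically onto its image. Composed with the global bundle retraction $\Graff(k,n)\to\Gr(k,n)$, this identifies each affine Schubert class with the corresponding Schubert class in $\Gr(k,n)$. Since the Schubert cycles form a $\mathbb{Z}_2$-basis of $\H_\ast(\Gr(k,n),\mathbb{Z}_2)$, the affine Schubert cycles then form a $\mathbb{Z}_2$-basis of $\H_\ast(\Graff(k,n),\mathbb{Z}_2)$, establishing the first claim.

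The ring isomorphism for $\Graff(k,\infty)$ then follows from Corollary~\ref{cor:homoinfty}: $\Graff(k,\infty)$ is homotopy equivalent to $\Gr(k,\infty)$, the classifying space $BO(k)$, whose mod-$2$ cohomology is $\mathbb{Z}_2[w_1,\dots,w_k]$ by Borel's classical calculation, with $w_i$ the $i$-th Stiefel--Whitney class. The fundamental theorem of symmetric polynomials identifies this polynomial ring with $\mathbb{Z}_2[x_1,\dots,x_k]^{\mathfrak{S}_k}$ via $w_i\mapsto e_i(x_1,\dots,x_k)$.

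The main obstacle is the dimension discrepancy flagged by Theorem~\ref{thm:dim}: the affine Schubert variety $\Psi(d_1,\dots,d_k)$ has dimension $d_1-1$ larger than $\Omega(d_1,\dots,d_k)$ and is in general noncompact, so ``the class of $\Psi$'' cannot be interpreted as an ordinary fundamental class in top degree. The correct interpretation, via Theorem~\ref{thm:Schubert2}, is that an affine Schubert cycle is the homology class of the compact retract $s_b(\Omega)$ sitting inside $\Psi$; Lemma~\ref{lem:Schubert2} (convexity of the fibers of $\tau_b$) is what makes this well-defined and independent of choices. Confirming that this choice of representative is compatible with the affine cell decomposition of Proposition~\ref{prop:cell} — so that the resulting basis is genuinely cellular rather than merely a homotopy-theoretic count — is where the technical care lies.
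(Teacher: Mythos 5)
Your proposal is correct and follows essentially the same route as the paper: transfer the Schubert basis across the homotopy equivalence $\tau$ coming from the vector bundle structure of Theorem~\ref{thm:bundle}, use Theorem~\ref{thm:Schubert2} to identify each basis element with an affine Schubert cycle, and deduce the ring isomorphism from Corollary~\ref{cor:homoinfty} together with the classical computation of $\H^\ast(\Gr(k,\infty),\mathbb{Z}_2)$. Your remarks on the degree shift $d_1-1$ (which the paper absorbs via Theorem~\ref{thm:dim}) and on the bound $n-k$ on the parts of the partitions are accurate refinements of what the paper states more tersely, and the worry about compatibility with Proposition~\ref{prop:cell} is not actually needed for the argument to close.
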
 
\begin{proof}
By Fact~\ref{fact:schubert}\eqref{it:cyc}, since $\tau$ is a homotopy equivalence, the collection of $\tau^{-1}\bigl(\Omega(\mathbb{A}_1,\dots,\mathbb{A}_k)\bigr)$ over all $i$-dimensional Schubert varieties $\Omega(\mathbb{A}_1,\dots,\mathbb{A}_k)$ form a basis for the $i$th (co)homology group of $\Graff(k,n)$. Therefore, by Theorems~\ref{thm:dim} and \ref{thm:Schubert2}, the $j$-dimensional affine Schubert varieties $\Psi(\mathbb{A}_1+b,\dots,\mathbb{A}_k+b)$ form a basis for the $(j-d_1 +1)$th (co)homology group of $\Graff(k,n)$.

For the cohomology ring, the homotopy equivalence between $\Graff(k,\infty)$ and $\Gr(k,\infty)$ in Corollary~\ref{cor:homoinfty} gives $\H^\ast (\Graff(k,\infty),\mathbb{Z}_2) \simeq \H^\ast (\Gr(k,\infty),\mathbb{Z}_2)$. That $\H^i(\Graff(k,n),\mathbb{Z}_2) \simeq \mathbb{Z}_2^{r_i}$ is a standard result \cite{Prasolov2007, Milnor, Chern1948}. 
\end{proof}
We stated Theorem~\ref{thm:homology} with $\mathbb{Z}_2$ coefficients for simplicity but in the same manner we may obtain $\H^\ast (\Graff(k,\infty),\mathbb{Z})$ and $\H^\ast (\Graff(k,\infty),\mathbb{Q})$  in terms of characteristic classes using the corresponding results for $\Gr(k,\infty)$ in \cite{Borel1953, Takeuchi1962, Brown1982} and \cite{Thomas1960} respectively.

We conclude this section with a cell decomposition of $\Graff(k,n)$, which is not given by affine Schubert varieties but by preimages of Schubert varieties.
\begin{proposition}\label{prop:cell}
The collection of preimages $\tau^{-1}\bigl(\Omega(\mathbb{A}_1,\dots,\mathbb{A}_k)\bigr)$ over all flags in $\mathbb{R}^n$ of length $k$ gives a cell decomposition of $\Graff(k,n)$.
\end{proposition}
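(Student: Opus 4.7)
The plan is to pull back the classical Schubert cell decomposition of $\Gr(k,n)$ via the deaffine map $\tau$, exploiting the fact that $\tau$ is a vector bundle. The two key inputs are Theorem~\ref{thm:bundle}\eqref{it:vb}, which says $\tau:\Graff(k,n)\to \Gr(k,n)$ is a rank-$(n-k)$ vector bundle, and Fact~\ref{fact:schubert}\eqref{it:cell}, which says the Schubert cells $\Omega(\mathbb{A}_1,\dots,\mathbb{A}_k)$ partition $\Gr(k,n)$ into affine cells with the Bruhat closure relations giving the requisite incidence structure.

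First, I would fix an (open) Schubert cell $\Omega = \Omega(\mathbb{A}_1,\dots,\mathbb{A}_k)$ in $\Gr(k,n)$, which is classically an affine space, say $\Omega \cong \mathbb{R}^{d}$, and is in particular contractible. Restricting the vector bundle $\tau$ to $\Omega$ yields a rank-$(n-k)$ bundle over a contractible CW base, which is necessarily trivializable. Hence
\[
\tau^{-1}(\Omega) \;\cong\; \Omega \times \mathbb{R}^{n-k} \;\cong\; \mathbb{R}^{d+n-k},
\]
which is again an affine cell, of dimension $(n-k)$ greater than $\Omega$. Note that the total count of cells is the same as in $\Gr(k,n)$, so each affine Schubert flag contributes exactly one cell to $\Graff(k,n)$.

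Second, I would assemble the global decomposition. Since the Schubert cells disjointly cover $\Gr(k,n)$, and $\tau$ is surjective, the preimages $\{\tau^{-1}(\Omega)\}$ disjointly cover $\Graff(k,n)$. For the closure/incidence relations, continuity of $\tau$ gives $\overline{\tau^{-1}(\Omega)} \subseteq \tau^{-1}(\overline{\Omega})$; the right-hand side is a union of preimages of Schubert cells of strictly smaller dimension (by the Bruhat order on $\Gr(k,n)$), which is exactly the compatibility needed for a cell decomposition. One can make this explicit by using a local trivialization of $\tau$ over a neighborhood of $\overline{\Omega}$, within which the closure of $\tau^{-1}(\Omega)$ is simply $\overline{\Omega} \times \mathbb{R}^{n-k}$ union boundary-fiber strata.

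The main conceptual obstacle is matching the noncompactness of $\Graff(k,n)$ with the notion of a cell decomposition: each pulled-back cell is $\mathbb{R}^{d+n-k}$ rather than an open disk of finite radius, and the total space is not compact. This is, however, the same convention already in force in Fact~\ref{fact:schubert}\eqref{it:cell} (Schubert cells in $\Gr(k,n)$ are themselves affine, and cell decompositions by affines — Bruhat pavings — are standard in this setting), so no new convention is needed; the argument above simply propagates it across $\tau$.
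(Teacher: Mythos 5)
Your proposal is correct and follows exactly the same route as the paper's own (much terser) proof: cite the vector bundle structure of $\tau$ from Theorem~\ref{thm:bundle} and the Schubert cell decomposition of $\Gr(k,n)$ from Fact~\ref{fact:schubert}, then pull back. Your version usefully fills in the details the paper leaves implicit --- triviality of the bundle over each contractible cell, so that $\tau^{-1}(\Omega)\cong\mathbb{R}^{d+n-k}$, plus the disjointness and closure relations --- but there is no difference in approach.
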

\begin{proof}
By Theorem~\ref{thm:bundle}, $\Graff(k,n)$ is a vector bundle over $\Gr(k,n)$ with $\tau : \Graff(k,n) \to \Gr(k,n)$ the bundle projection. By Fact~\ref{fact:schubert}\eqref{it:cell}, the collection of $\Omega(\mathbb{A}_1,\dots,\mathbb{A}_k)$ over all flags of length $k$ provide a cell-decomposition of $\Gr(k,n)$. So the required result follows.
\end{proof}

\section{Metric geometry of the affine Grassmannian}\label{sec:metric}

We have two goals in this section. The first is to extend various distances defined on Grassmannian to the affine Grassmannian, the results are summarized in Table~\ref{tab:distances} --- these are distances between affine subspaces of the \emph{same} dimension. Following our earlier work in \cite{YL}, our next goal is to further extend these distances in a natural way (using the affine Schubert varieties in Definition~\ref{def:Psi+-}) to affine subspaces of \emph{different} dimensions.

\subsection{Issues in metricizing $\Graff(k,n)$}

A  reason for the widespread applicability of the usual Grassmannian is that one has concrete, explicitly computable expressions for geodesics and distances on $\Gr(k,n)$. In  \cite{AMS, EAS, Wong}, these expressions were obtained from a purely differential geometric perspective. One might imagine that the differential geometric structures on $\Graff(k,n)$ in Propositions~\ref{prop:smooth}, \ref{prop:homo}, or Theorem \ref{thm:bundle} would yield similar results. Surprisingly this is not the case.

A more careful examination of  the arguments  in \cite{AMS, EAS, Wong} for obtaining explicit expressions for geodesics and geodesic distances on $\V(k,n)$ and $\Gr(k,n)$ reveal that they rely on a somewhat obscure structure, namely, that of a \emph{geodesic orbit space} \cite{AA, Gordon}. In general, if $G$ is a compact semisimple Lie group and $G/H$ is a reductive homogeneous space, then there is a standard metric induced by the restriction of the  Killing form on $\mathfrak{g}/\mathfrak{h}$ where $\mathfrak{g}$ and $\mathfrak{h}$ are the Lie algebras of $G$ and $H$ respectively. With this standard metric, $G/H$ is a geodesic orbit space, i.e., all geodesics are orbits of one-parameter subgroups of $G$. In the case of $\Gr(k,n)=\O(n)/\bigl(\O(n-k)\times \O(k)\bigr)$ and $\V(k,n)=\O(n)/\O(n-k)$, as $\O(n)$ is a compact semisimple Lie group, $\Gr(k,n)$ and $\V(k,n)$  are  geodesic orbit spaces. Furthermore, as  $\O(n)$ is a matrix Lie group, all its one-parameter subgroups are given by exponential maps, which in turn allows us to  write down explicit expressions for the geodesics (and thus also the geodesic distances) on $\Gr(k,n)$ and $\V(k,n)$.  The difficulty is seeking similar expressions on $\Graff(k,n)=\E(n)/\bigl(\E(n-k)\times \O(k)\bigr)$ is that it may not be a geodesic orbit space since $\E(n)$ is not compact.

What about the vector bundle structure on $\Graff(k,n)$ then? If $E$ is a vector bundle over a Riemannian manifold $M$, then the \emph{pullback} of the metric on $M$ induces a metric on $E$. Nevertheless, this metric on $E$ is uninteresting --- by definition, it disregards the fibers of the bundle. In the context of Theorem~\ref{thm:bundle}, this is akin to defining the distance between $\mathbb{A} + b$ and $\mathbb{B} + c \in \Graff(k,n)$ as the usual Grassmann distance between $\mathbb{A}$ and $\mathbb{B} \in \Gr(k,n)$.

We will turn to the algebraic geometric properties of $\Graff(k,n)$ in Theorem~\ref{thm:alg} to provide the framework for defining distances with explicitly computable expressions, first for equidimensional affine subspaces and next for inequidimensional affine subspaces.

\subsection{Distances on $\Graff(k,n)$}

The Riemannian metric on $\Gr(k,n)$ yields the following well-known \emph{Grassmann distance} between two subspaces $\mathbb{A}, \mathbb{B} \in \Gr(k,n)$,
\begin{equation}\label{eq:grassdist}
d_{\Gr(k,n)}(\mathbb{A},\mathbb{B})=\Bigl(\sum\nolimits_{i=1}^k \theta_i^2\Bigr)^{1/2},
\end{equation}
where  $\theta_1,\dots,\theta_{k}$ are the principal angles between  $\mathbb{A}$ and  $\mathbb{B} $. This distance is easily computable via \textsc{svd} as $\theta_i = \cos^{-1} \sigma_i$, where $\sigma_i$ is the $i$th singular value of  the matrix $A^\tp B$ for any orthonormal bases $A$ and $B$ of $\mathbb{A}$ and $\mathbb{B}$ \cite{GVL, YL}.

By Theorem~\ref{thm:alg}(ii), we may identify $\Graff(k,n)$ with its image $j\bigl(\Graff(k,n)\bigr)$ in $\Gr(k+1,n+1)$. As a subset of $\Gr(k+1,n+1)$, $\Graff(k,n)$ inherits the Grassmann distance $d_{\Gr(k+1,n+1)}$ on $\Gr(k+1,n+1)$, giving us the distance in Theorem~\ref{thm3} that can also be readily computed using \textsc{svd}. We will show in Proposition~\ref{prop:validation} that this distance is in fact \emph{intrinsic}.
\begin{theorem}\label{thm3}
For any two affine $k$-flats $\mathbb{A}+b$ and $\mathbb{B}+c \in \Graff(k,n)$,
\[
d_{\Graff(k,n)}(\mathbb{A}+b,\mathbb{B}+c) \coloneqq d_{\Gr(k+1,n+1)}\bigl(j(\mathbb{A}+b), j(\mathbb{B}+c) \bigr),
\]
where $j$ is the embedding in \eqref{eq:j}, defines a notion of distance consistent with the Grassmann distance.  If
\[
Y_{\mathbb{A} + b} =
\begin{bmatrix}
A& b_0/\sqrt{1+\lVert b_0\rVert^2}\\
0& 1/\sqrt{1+\lVert b_0\rVert^2}
\end{bmatrix},\qquad
Y_{\mathbb{B} + c}=
\begin{bmatrix}
B& c_0/\sqrt{1+\lVert c_0\rVert^2}\\
0& 1/\sqrt{1+\lVert c_0\rVert^2}
\end{bmatrix}
\]
are the matrices of Stiefel coordinates for $\mathbb{A}+b$ and $\mathbb{B}+c$ respectively, then
\begin{equation}\label{eq:graffdist}
d_{\Graff(k,n)}(\mathbb{A}+b,\mathbb{B}+c) = \Bigl(\sum\nolimits_{i=1}^{k+1} \theta_i^2\Bigr)^{1/2},
\end{equation}
where $\theta_i = \cos^{-1} \sigma_i$ and $\sigma_i$ is the $i$th singular value of $Y_{\mathbb{A} + b}^\tp  Y_{\mathbb{B} + c} \in \mathbb{R}^{(k+1) \times (k+1)}$.
\end{theorem}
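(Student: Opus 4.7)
The plan is to reduce the theorem to the standard Grassmann distance formula \eqref{eq:grassdist} on $\Gr(k+1,n+1)$ via the embedding $j$ from Theorem~\ref{thm:alg}(ii). First, since $j$ is injective, pulling back $d_{\Gr(k+1,n+1)}$ through $j$ automatically defines a metric on $\Graff(k,n)$: symmetry, positivity, and the triangle inequality transfer at once, and two affine subspaces map to the same point of $\Gr(k+1,n+1)$ only if they coincide. This takes care of the first claim that $d_{\Graff(k,n)}$ is a well-defined distance consistent with the Grassmann distance.

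The central step is to verify that the matrix $Y_{\mathbb{A}+b}$ of Stiefel coordinates is an orthonormal basis for the $(k+1)$-plane $j(\mathbb{A}+b) \subseteq \mathbb{R}^{n+1}$. Writing orthogonal affine coordinates $[A,b_0]$ with $A^\tp A = I$, $A^\tp b_0 = 0$, and $b = a + b_0$ for some $a \in \mathbb{A}$, we identify the columns of $A$ with the vectors $(a_i, 0) \in \mathbb{R}^{n+1}$. By the definition \eqref{eq:j}, $j(\mathbb{A}+b) = \spn\bigl(\{(a_i,0)\} \cup \{(b,1)\}\bigr)$. Subtracting the component $(a,0) \in \spn(A)$ from $(b,1)$ produces $(b_0,1) \in j(\mathbb{A}+b)$, which after normalization by $\sqrt{1+\lVert b_0\rVert^2}$ yields exactly the last column of $Y_{\mathbb{A}+b}$. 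Orthogonality of this column to the first $k$ is immediate from $A^\tp b_0 = 0$, and unit length of each column is a direct check; the count of columns matches $\dim j(\mathbb{A}+b) = k+1$, so we obtain an orthonormal basis.

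With orthonormal bases in hand for both $j(\mathbb{A}+b)$ and $j(\mathbb{B}+c)$, the formula \eqref{eq:graffdist} is an immediate instance of the standard principal angle formula \eqref{eq:grassdist} applied in $\Gr(k+1,n+1)$: the cosines of principal angles between two subspaces are the singular values of the inner-product matrix $Y_{\mathbb{A}+b}^\tp Y_{\mathbb{B}+c}$, and the squared sum of the angles gives the Grassmann distance.

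The one subtlety, which I expect to be the main (though still minor) point requiring care, is well-definedness: the Stiefel coordinates $Y_{\mathbb{A}+b}$ depend on the choice of orthonormal basis $A$ for $\mathbb{A}$. By \eqref{eq:otsc}, any two such choices differ by right multiplication by an orthogonal block matrix $\begin{bsmallmatrix} Q' & 0 \\ 0 & 1 \end{bsmallmatrix} \in \O(k+1)$, and likewise for $Y_{\mathbb{B}+c}$. Since the singular values of a matrix are invariant under multiplication on either side by orthogonal matrices, the values $\sigma_i$ of $Y_{\mathbb{A}+b}^\tp Y_{\mathbb{B}+c}$ — and hence the right-hand side of \eqref{eq:graffdist} — are independent of these choices. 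This invariance is also what makes the pullback distance agree intrinsically with the Grassmann distance between $j(\mathbb{A}+b)$ and $j(\mathbb{B}+c)$, completing the argument.
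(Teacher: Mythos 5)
Your proof is correct and follows exactly the intended route: the paper itself proves Theorem~\ref{thm3} only by citing \cite[Theorem~4.2]{WYL}, and the argument there is precisely what you supply --- checking that the Stiefel coordinates $Y_{\mathbb{A}+b}$ give an orthonormal basis of $j(\mathbb{A}+b)$ (via $b = a + b_0$ with $a\in\mathbb{A}$, so that $(b,1)-(a,0)=(b_0,1)$ lies in the span), then invoking the principal-angle formula \eqref{eq:grassdist} in $\Gr(k+1,n+1)$ together with the $\O(k+1)$-invariance from \eqref{eq:otsc}. No gaps.
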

\begin{proof}
Similar to \cite[Theorem~4.2]{WYL}
\end{proof}

It is not difficult to see that the angles $\theta_1,\dots,\theta_{k+1}$ are independent of the choice of Stiefel coordinates. We define the following affine analogues of principal angles and principal vectors of linear subspaces \cite{BG, GVL, YL} that will be useful later.
\begin{definition}
We will call $\theta_i $ the $i$th \emph{affine principal angles} between the respective affine subspaces and denote it by $\theta_i(\mathbb{A}+b,\mathbb{B}+c)$. Consider the \textsc{svd},
\begin{equation}\label{eq:scsvd}
 Y_{\mathbb{A} + b}^\tp  Y_{\mathbb{B} + c} = U \Sigma V^\tp 
\end{equation}
where $U,V \in \O(k+1)$ and $\Sigma = \diag (\sigma_1,\dots,\sigma_{k+1})$. Let
\[
 Y_{\mathbb{A} + b} U = [p_1,\dots,p_{k+1}], \qquad Y_{\mathbb{B} + c} V = [q_1,\dots,q_{k+1}].
\]
We will call the pair of column vectors $(p_i,q_i)$ the $i$th \emph{affine principal vectors} between $\mathbb{A}+b$ and $\mathbb{B}+c$.
\end{definition}
We next show that the distance in Theorem~\ref{thm3} is the only possible distance on an affine Grassmannian compatible with the usual Grassmann distance on a Grassmannian. On any connected Riemannian manifold $M$ with Riemannian metric $g$, there is an intrinsic distance function $d_M$ on $M$ with respect to $g$,
\[
d_M(x,y) \coloneqq \inf \{ L(\gamma):  \text{$\gamma$ is a piecewise smooth curve connecting $x$ and $y$ in $M$}\}.
\]
Here $L(\gamma)$ is the length of the cruve $\gamma : [0,1] \to M$ defined by 
\[
L(\gamma) \coloneqq \int_0^1 \lVert \gamma'(t) \rVert = \int_0^1 \sqrt{\smash[b]{g_{\gamma(t)}}(\smash[t]{\gamma'(t),\gamma'(t)})}.
\]

For a connected submanifold of  $N \subseteq M$, there is a natural Riemannian metric $g_N$ on $N$ induced by $g$ and therefore a corresponding intrinsic distance function,
\[
d_N(x,y) \coloneqq \inf \{ L(\gamma):  \text{$\gamma$ is a piecewise smooth curve connecting $x$ and $y$ in $N$}\}.
\]
On the other hand, we may also define a distance function $d_M\rvert_N$ on $N$ by simply restricting the distance function $d_M$ to $N$ --- note that this is what we have done in Theorem~\ref{thm3} with $M=\Gr(k+1,n+1)$ and $N=\Graff(k,n)$. In general,
$d_M\rvert_N \ne d_N$.
For example, for $N = \mathbb{S}^2$ embedded as the unit sphere in $M =\mathbb{R}^3$, the two distance functions on $\mathbb{S}^2$  are obviously different. However, for our embedding of $\Graff(k,n)$ in $\Gr(k+1,n+1)$, the two distances on $\Graff(k,n)$ agree. 
\begin{proposition}\label{prop:general validation}
Let $K$ be a closed submanifold of codimension at least two in $M$ and let $N$ be the complement of $K$ in $M$. Then
$d_M\rvert_N = d_N$.
\end{proposition}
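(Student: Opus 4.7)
The plan is to prove the two inequalities $d_N \leq d_M|_N$ and $d_M|_N \leq d_N$ separately. The inequality $d_M|_N \leq d_N$ is immediate: since $N$ is an open submanifold of $M$ carrying the restricted Riemannian metric, any piecewise smooth curve in $N$ has the same length computed in either space, hence the infimum of lengths over curves constrained to $N$ majorizes the infimum over curves in $M$.

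The substantive direction is $d_N \leq d_M|_N$. Fix $x,y \in N$ and $\varepsilon > 0$, and choose a piecewise smooth curve $\gamma : [0,1] \to M$ connecting $x$ to $y$ with $L(\gamma) < d_M(x,y) + \varepsilon/2$. I want to construct a piecewise smooth curve $\tilde\gamma : [0,1] \to N$ with the same endpoints and $L(\tilde\gamma) < L(\gamma) + \varepsilon/2$. The guiding principle is transversality: since $\dim [0,1] + \dim K = 1 + \dim K < \dim M$, a generic smooth curve in $M$ is transverse to $K$ and hence misses $K$ entirely, so a suitable small perturbation of $\gamma$ off $K$ should exist without appreciable length cost.

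To make this quantitative, note that the compact set $\gamma^{-1}(K) \subset [0,1]$ avoids the endpoints (as $x,y \in N$), so it can be covered by finitely many pairwise disjoint open intervals $(a_i,b_i)$, $i=1,\dots,m$, each chosen small enough that $\gamma([a_i,b_i])$ lies in a coordinate chart $(U_i,\phi_i)$ of $M$ in which $K \cap U_i$ is the vanishing locus of at least two coordinate functions, and with $\gamma(a_i), \gamma(b_i) \in N$. Inside each chart, the complement of a linear subspace of codimension at least two in Euclidean space is path-connected, and any curve there can be pushed off the subspace by an arbitrarily small $C^1$ perturbation with prescribed endpoint values. Replace $\gamma|_{[a_i,b_i]}$ by such a smooth arc $\gamma_i : [a_i,b_i] \to U_i \setminus K$ agreeing with $\gamma$ at the endpoints, and $C^1$-close enough that $|L(\gamma_i) - L(\gamma|_{[a_i,b_i]})| < \varepsilon/(2m)$; the uniform continuity of the metric tensor on the compact image $\overline{\phi_i(\gamma([a_i,b_i]))}$ translates $C^1$-smallness into length-smallness. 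Concatenating the $\gamma_i$ with the unaltered portions of $\gamma$ yields the required $\tilde\gamma \subset N$, and letting $\varepsilon \to 0$ gives $d_N(x,y) \leq d_M(x,y) = d_M|_N(x,y)$.

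The main obstacle is that $\gamma^{-1}(K)$ need not be a finite set or even a nowhere dense interval union in any simple way (it could, for instance, be a Cantor-like set), so one cannot simply delete or straighten the curve at isolated bad points. The device of covering $\gamma^{-1}(K)$ by finitely many open intervals using compactness, and then perturbing chart-by-chart, is what reduces the global problem to finitely many independent local perturbations in Euclidean charts, where the codimension-$\geq 2$ hypothesis is precisely the condition allowing one to route around $K$ without backtracking or picking up length. The rest of the argument is bookkeeping with the $\varepsilon/(2m)$ budget.
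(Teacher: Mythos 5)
Your overall strategy --- perturb a near-minimizing curve off $K$ by exploiting the codimension-$\ge 2$ hypothesis, working locally in slice charts --- is the right one and is in essence the paper's proof, which simply notes that $N$ is path-connected and then invokes the transversality theorem to produce the perturbation in one stroke. Your easy inequality $d_M\rvert_N \le d_N$ is handled correctly. But there is a genuine gap in your covering step. You require finitely many pairwise disjoint open intervals $(a_i,b_i)$ that simultaneously (a) cover $\gamma^{-1}(K)$, (b) are each short enough that $\gamma([a_i,b_i])$ lies in a single slice chart for $K$, and (c) satisfy $\gamma(a_i),\gamma(b_i)\in N$. These demands can be mutually incompatible: nothing prevents a near-minimizing $\gamma$ from travelling \emph{inside} $K$ over an entire parameter interval $[s,t]$ whose image is too large for any one chart. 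Since $[s,t]\subseteq\gamma^{-1}(K)$ is connected and your intervals are pairwise disjoint open sets covering it, a single $(a_i,b_i)$ must contain all of $[s,t]$, and then $\gamma([a_i,b_i])\supseteq\gamma([s,t])$ cannot fit in one chart, while condition (c) prevents you from splitting $[s,t]$ among several intervals. You correctly flagged that $\gamma^{-1}(K)$ could be Cantor-like, but the case that actually defeats the construction is the opposite one, where it contains a nondegenerate interval.

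The gap is repairable in two standard ways. Either perturb $\gamma$ globally: cover its compact image by finitely many charts, form $\gamma(t)+\rho(t)v$ via a partition of unity, and apply the parametric transversality theorem to conclude that for almost every small $v$ the perturbed curve is transverse to $K$ and hence, as $1+\dim K<\dim M$, disjoint from $K$; the $C^1$-smallness controls the length change exactly as in your bookkeeping. This is what the paper's citation of the transversality theorem accomplishes. Alternatively, keep the chart-by-chart scheme but give up on holding the intermediate endpoints fixed: subdivide $[0,1]$ so that each sub-arc lies in a chart, replace each division point $\gamma(t_j)$ by a nearby point $p_j\in N$ (possible since $N$ is open and dense), and perturb each sub-arc to a curve in $N$ joining $p_{j-1}$ to $p_j$, chaining the pieces together. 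As written, however, your proof does not go through.
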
 
\begin{proof}
We need to show that for any two distinct points $x,y\in N$,
$d_M(x,y)=d_N(x,y)$.
By definition of $d_M$ and $d_N$ it suffices to show that any piecewise smooth curve $\gamma$ in $M$ connecting $x,y$ can be approximated by a piecewise smooth curve in $N$ connecting $x,y$.
The assumption on codimension implies that $x,y\in N$ is connected by a piecewise smooth curve in $N$. The transversality theorem \cite[Theorem~2.4]{Hirsch} then implies that  $\gamma$ can be approximated by curves in $N$ connecting $x$ and $y$.
\end{proof}
\begin{proposition}\label{prop:validation}
The distance $d_{\Graff(k,n)}$ in Theorem~\ref{thm3} is intrinsic with respect to the Riemannian metric on $\Graff(k,n)$ induced from that of $\Gr(k+1,n+1)$.
\end{proposition}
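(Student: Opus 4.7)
The plan is to reduce Proposition~\ref{prop:validation} directly to Proposition~\ref{prop:general validation}, which has just been established. Under the embedding $j:\Graff(k,n)\hookrightarrow \Gr(k+1,n+1)$ of Theorem~\ref{thm:alg}(ii), we have the set-theoretic decomposition of Theorem~\ref{thm:alg}(iii):
\[
\Gr(k+1,n+1) = j\bigl(\Graff(k,n)\bigr) \cup K, \qquad K \cong \Gr(k+1,n).
\]
So I would set $M \coloneqq \Gr(k+1,n+1)$, $N \coloneqq j\bigl(\Graff(k,n)\bigr)$, and $K$ to be its complement, and simply verify that $K$ satisfies the hypotheses of Proposition~\ref{prop:general validation}.

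First I would note that $K$ is closed in $M$: this is immediate because $N$ is Zariski open in $M$ (Theorem~\ref{thm:alg}(ii)), hence open in the manifold topology, so $K$ is closed. Moreover, under the isomorphism $K \cong \Gr(k+1,n)$, $K$ is a smooth submanifold of $\Gr(k+1,n+1)$ (it is a Schubert subvariety of $\Gr(k+1,n+1)$, biregularly isomorphic to a Grassmannian).

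Next I would compute the codimension. Since $\dim \Gr(k+1,n+1) = (k+1)(n-k)$ and $\dim \Gr(k+1,n) = (k+1)(n-k-1)$,
\[
\operatorname{codim}_M K = (k+1)(n-k) - (k+1)(n-k-1) = k+1 \ge 2,
\]
because $k \ge 1$ by Definition~\ref{def:graff}. Thus $K$ is a closed submanifold of codimension at least two. Applying Proposition~\ref{prop:general validation} gives $d_M\rvert_N = d_N$; unpacking definitions and using the definition of $d_{\Graff(k,n)}$ in Theorem~\ref{thm3} as the restriction of $d_{\Gr(k+1,n+1)}$ to $j(\Graff(k,n))$, this is precisely the claim that $d_{\Graff(k,n)}$ is the intrinsic distance of the induced Riemannian metric.

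The only place any care is needed is in confirming that the codimension hypothesis actually holds; the rest is bookkeeping. The case $k=0$ would give codimension $1$ (and indeed would fail, as the restriction of the Fubini--Study distance on $\mathbb{RP}^n = \Gr(1,n+1)$ to $\mathbb{R}^n = \Graff(0,n)$ is bounded while the intrinsic Euclidean distance is not), but this is explicitly excluded by the standing hypothesis $k \ge 1$. Hence there is no real obstacle, and the proof reduces to the two-line application of Proposition~\ref{prop:general validation} outlined above.
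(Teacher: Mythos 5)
Your proposal is correct and is essentially identical to the paper's own proof: both identify the complement of $j\bigl(\Graff(k,n)\bigr)$ in $\Gr(k+1,n+1)$ as a copy of $\Gr(k+1,n)$ of codimension $k+1\ge 2$ and then invoke Proposition~\ref{prop:general validation}. Your extra remark about the failure at $k=0$ is a nice sanity check but does not change the argument.
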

\begin{proof}
By Theorem~\ref{thm:alg}, the complement of $N = \Graff(k,n)$ in $M$ is $\Gr(k+1,n)$ and has codimension $k+1\ge 2$. Hence Proposition~\ref{prop:general validation} applies.
\end{proof}

At this point, we believe we have provided sufficient justification to call the distance in \eqref{eq:graffdist} the \emph{Grassmann distance} on $\Graff(k,n)$.
We next determine an expression for the geodesic connecting two points on $\Graff(k,n)$ that attains their minimum Grassmann distance. There is one caveat --- this geodesic may contain a point lying outside  $\Graff(k,n)$ as it is not a geodesically complete manifold.
\begin{lemma}
Let $\mathbb{A}+b,\mathbb{B}+c\in \Graff(k,n)$ and let
\[
Y_{\mathbb{A} + b} = \begin{bmatrix} A & b_0/\sqrt{\lVert b_0\rVert^{2}+1} \\ 0& 1/\sqrt{\lVert b_0\rVert^{2}+1} \end{bmatrix},\qquad Y_{\mathbb{B} + c} =\begin{bmatrix} B& c_0/\sqrt{\lVert c_0\rVert^{2}+1} \\ 0& 1/\sqrt{\lVert c\rVert^{2}+1} \end{bmatrix} 
\]
be their Stiefel coordinates. If $Y_{\mathbb{A} + b}^\tp Y_{\mathbb{B} + c}$ is invertible, then there is at most one point on the distance minimizing geodesic in $\Gr(k+1,n+1)$ connecting $\mathbb{A}+b$ and $\mathbb{B}+c$ which lies outside $j\bigl(\Graff(k,n)\bigr)$. Here $j$ is the embedding in \eqref{eq:j}.
\end{lemma}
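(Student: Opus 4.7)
The plan is to write the minimizing geodesic in $\Gr(k+1,n+1)$ explicitly using the SVD of \eqref{eq:scsvd} and then count how many of its points lie in the complement of $j(\Graff(k,n))$. Set $Y_0 \coloneqq Y_{\mathbb{A}+b}$, $Y_1 \coloneqq Y_{\mathbb{B}+c}$, and $Y_0^\tp Y_1 = U(\cos\Theta)V^\tp$ with $\Theta = \diag(\theta_1,\dots,\theta_{k+1})$. Invertibility of $Y_0^\tp Y_1$ forces $\theta_i \in [0,\pi/2)$. Set $P \coloneqq Y_0 U$, and, on indices with $\sin\theta_i \ne 0$, set $Q' \coloneqq (Y_1 V - P\cos\Theta)(\sin\Theta)^{-1}$. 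On the remaining indices ($\theta_i = 0$) the $i$th columns of $P$ and $Y_1V$ agree, so the $i$th column of the geodesic is constant in $t$ and may be handled separately. The standard form of the minimizing geodesic is then
\[
\Gamma(t) \coloneqq P\cos(t\Theta) + Q'\sin(t\Theta),
\]
a Stiefel representative of $\gamma(t)$ joining $j(\mathbb{A}+b)$ at $t=0$ to $j(\mathbb{B}+c)$ at $t=1$.

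By Theorem~\ref{thm:alg}(iii), the complement of $j(\Graff(k,n))$ in $\Gr(k+1,n+1)$ consists of those $(k+1)$-planes contained in $E_n = \spn\{e_1,\dots,e_n\}$, so $\gamma(t)$ lies outside $j(\Graff(k,n))$ exactly when $e_{n+1}\perp\gamma(t)$, i.e.\ when $e_{n+1}^\tp \Gamma(t) = 0 \in \mathbb{R}^{1\times(k+1)}$. Writing $\alpha \coloneqq e_{n+1}^\tp P$ and $\beta \coloneqq e_{n+1}^\tp Q'$, this is the system
\[
\alpha_i \cos(t\theta_i) + \beta_i \sin(t\theta_i) = 0, \qquad i = 1,\dots,k+1.
\]
The crucial observation is that the last row of $Y_0$ equals $[0,\dots,0,\,1/\sqrt{1+\lVert b_0\rVert^2}]$, so $\alpha$ equals $1/\sqrt{1+\lVert b_0\rVert^2}$ times the last row of $U$; in particular $\alpha \ne 0$ because $U$ is orthogonal.

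Now pick any index $i$ with $\alpha_i \ne 0$. If $\theta_i = 0$, the $i$th equation reduces to the $t$-independent relation $\alpha_i = 0$, which fails, so the system has no solution and the geodesic avoids the complement altogether. If $\theta_i > 0$, rewrite the $i$th equation as $R_i \sin(t\theta_i + \varphi_i) = 0$ with $R_i = \sqrt{\alpha_i^2 + \beta_i^2} > 0$; consecutive zeros of this sinusoid are spaced $\pi/\theta_i > 2$ apart, so at most one lies in the open interval $(0,1)$ of length $1$. Any $t^* \in (0,1)$ with $\gamma(t^*)$ outside $j(\Graff(k,n))$ must satisfy the $i$th equation, hence must coincide with this unique candidate. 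Since the endpoints $t=0,1$ give $j(\mathbb{A}+b)$ and $j(\mathbb{B}+c)$, which lie in $j(\Graff(k,n))$ by construction, the geodesic meets the complement in at most one point.

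The only mildly delicate point is the bookkeeping for indices with $\theta_i = 0$, where the formula for $Q'$ is $0/0$; there the geodesic is stationary and the associated equation decouples from $t$, so these indices play no role in the uniqueness. Once the explicit parameterization $\Gamma(t)$ is fixed, the entire argument reduces to the one-variable fact that a pure sinusoid of period exceeding $4$ has at most one zero on an interval of length $1$.
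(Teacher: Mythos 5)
Your proof is correct and follows essentially the same route as the paper's: parameterize the minimizing geodesic via the SVD of $Y_{\mathbb{A}+b}^\tp Y_{\mathbb{B}+c}$, observe that leaving $j\bigl(\Graff(k,n)\bigr)$ means the last row of the Stiefel representative vanishes, and reduce to the zeros of a sinusoid whose period exceeds the length of the parameter interval. In fact you are somewhat more careful than the paper, which asserts the final ``at most one point'' without spelling out the spacing argument and does not address the degenerate indices with $\theta_i=0$, both of which you handle explicitly.
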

\begin{proof}
Let $U \in \O(k+1)$ and the diagonal matrix $\Sigma$ be as in \eqref{eq:scsvd}.  Let  $\Theta \coloneqq \diag(\theta_1,\dots,\theta_{k+1}) = \cos^{-1} \Sigma$ be the diagonal matrix of affine principal angles. By \cite{AMS} the geodesic $\gamma : [0,1] \to \Gr(k+1,n+1)$ connecting $j(\mathbb{A}+b)$ and $j(\mathbb{B}+c)$ is given by
$\gamma(t) = \spn\bigl( Y_{\mathbb{A}+b}U\cos(t\Theta)+Q\sin(t\Theta) \bigr)$,
where $Q \in \O(k+1)$ is such that the \textsc{rhs} of
\[
(I-Y_{\mathbb{A} + b}Y_{\mathbb{A} + b}^\tp )Y_{\mathbb{B} + c}(Y_{\mathbb{A} + b}^\tp Y_{\mathbb{B} + c})^{-1} = Q(\tan(\Theta))U^\tp 
\]
gives an \textsc{svd} of the matrix on the \textsc{lhs}.  Let the last row of $U$ and $Q$ as $[u_{k+1,1},\dots,u_{k+1,k+1}]^\tp $ and $[q_{k+1,1},\dots,q_{k+1,k+1}]^\tp $ respectively. Then $\gamma(t) \in \Gr(k+1,n+1)\setminus j\bigl(\Graff(k,n)\bigr)$ if and only if the entries on last row of $\gamma(t)$ are all zero, i.e.,
\[
\frac{u_{k+1,i}\cos(t\theta_i)}{\sqrt{\lVert b\rVert^{2}+1}}+q_{k+1,i}\sin(t\theta_i) = 0,
\]
for all $ i = 1,\dots ,k+1$. So at most one point on $\gamma$ lies outside $\Graff(k,n)$.
\end{proof}
\begin{corollary} \label{corollary:geodesic}
Let $\mathbb{A}+b$ and $\mathbb{B}+c\in \Graff(k,n)$. The distance minimizing geodesic $\gamma : [0,1]\to \Graff(k,n)$ connecting $\mathbb{A}+b$ and $\mathbb{B}+c$ is given by
\begin{equation}\label{eq:geodesic}
\gamma(t) = j^{-1}\bigl( \spn(Y_{\mathbb{A} + b} U \cos t\Theta + Q\sin t\Theta)\bigr),
\end{equation}
where $Q,U \in \O(k+1)$ and the diagonal matrix $\Theta$ are determined by the \textsc{svd}
\[
(I - Y_{\mathbb{A} + b} Y_{\mathbb{A} + b}^\tp ) Y_{\mathbb{B} + c} (Y_{\mathbb{A} + b}^\tp  Y_{\mathbb{B} + c})^{-1} = Q (\tan \Theta) U^\tp .
\]
The matrix $U$ is the same as that in \eqref{eq:scsvd} and $\Theta = \diag(\theta_1,\dots, \theta_{k+1})$ is the diagonal matrix of affine principal angles.
$\gamma$ attains the distance in \eqref{eq:graffdist} and its derivative at $t=0$ is given by
\begin{equation}\label{eq:log}
\gamma'(0) =  j^{-1}\bigl(Q \Theta U^\tp \bigr).
\end{equation}
\end{corollary}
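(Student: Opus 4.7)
The plan is to reduce the statement to the preceding lemma and to the known closed form for geodesics on a Grassmannian, using the embedding $j$ of Theorem~\ref{thm:alg} and the equality of intrinsic and restricted distances established in Proposition~\ref{prop:validation}.

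First, by Theorem~\ref{thm3} combined with Proposition~\ref{prop:validation}, the distance $d_{\Graff(k,n)}(\mathbb{A}+b,\mathbb{B}+c)$ equals the geodesic distance between $j(\mathbb{A}+b)$ and $j(\mathbb{B}+c)$ in $\Gr(k+1,n+1)$, and every piecewise smooth curve in $\Gr(k+1,n+1)$ connecting these two points can be approximated in length by curves lying in $j\bigl(\Graff(k,n)\bigr)$. Therefore a distance-minimizing curve in $\Gr(k+1,n+1)$ pulls back under $j^{-1}$ to a distance-minimizing curve in $\Graff(k,n)$, provided it lies in the image of $j$; by the preceding lemma, the Grassmannian geodesic connecting $j(\mathbb{A}+b)$ to $j(\mathbb{B}+c)$ fails to lie in $j\bigl(\Graff(k,n)\bigr)$ in at most one value of $t$, so $j^{-1}$ may be applied everywhere else.

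Next I would invoke the classical expression for geodesics on the Grassmannian (as in \cite{AMS,EAS,Wong}): given a base point $Y_0 \in \V(k+1,n+1)$ and a horizontal tangent vector $H$ with $Y_0^\tp H=0$ and compact \textsc{svd} $H = Q\Theta U^\tp$, the geodesic on $\Gr(k+1,n+1)$ emanating from $\spn(Y_0)$ in the direction $H$ is $t \mapsto \spn\bigl(Y_0 U\cos(t\Theta) + Q\sin(t\Theta)\bigr)$. To identify the tangent direction that points to $j(\mathbb{B}+c)$, one uses the standard recipe of projecting $Y_{\mathbb{B}+c}$ onto the horizontal space at $Y_{\mathbb{A}+b}$: specifically, the tangent $H$ with $\exp_{Y_{\mathbb{A}+b}}(H) = Y_{\mathbb{B}+c}$ is precisely the one whose \textsc{svd} $Q\tan(\Theta)U^\tp$ comes from
\[
(I - Y_{\mathbb{A}+b} Y_{\mathbb{A}+b}^\tp)\, Y_{\mathbb{B}+c}\, (Y_{\mathbb{A}+b}^\tp Y_{\mathbb{B}+c})^{-1},
\]
and the resulting angles $\Theta = \diag(\theta_1,\dots,\theta_{k+1})$ coincide with the diagonal of affine principal angles produced by the \textsc{svd} in \eqref{eq:scsvd}, because $\cos\Theta$ and $\sin\Theta$ are determined by the same pair $(Y_{\mathbb{A}+b}, Y_{\mathbb{B}+c})$. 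Substituting yields \eqref{eq:geodesic}, and the invertibility hypothesis on $Y_{\mathbb{A}+b}^\tp Y_{\mathbb{B}+c}$ needed to even form the projection is exactly the hypothesis of the preceding lemma, which is generic.

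For \eqref{eq:log} I would simply differentiate. Since the span is unchanged by right multiplication by an orthogonal matrix, rewrite the curve in \eqref{eq:geodesic} as $\gamma(t) = j^{-1}\bigl(\spn(\widetilde Y(t))\bigr)$ with
\[
\widetilde Y(t) \coloneqq Y_{\mathbb{A}+b}\, U\cos(t\Theta)\, U^\tp + Q\sin(t\Theta)\, U^\tp,
\]
so that $\widetilde Y(0) = Y_{\mathbb{A}+b}$ and a direct differentiation gives $\dot{\widetilde Y}(0) = Q\Theta U^\tp$. This is the desired horizontal tangent representative, since $Y_{\mathbb{A}+b}^\tp Q = 0$ follows from the factor $I - Y_{\mathbb{A}+b}Y_{\mathbb{A}+b}^\tp$ in the \textsc{svd} that defined $Q$. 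Transporting back through $j^{-1}$ gives $\gamma'(0) = j^{-1}(Q\Theta U^\tp)$. The main subtlety I anticipate is bookkeeping: making sure that the $U$ produced by the \textsc{svd} in \eqref{eq:scsvd} genuinely coincides with the $U$ appearing in the geodesic \textsc{svd}, which I would verify by expressing both in terms of the principal-vector pairs $(p_i,q_i)$.
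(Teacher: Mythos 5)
Your proposal is correct and follows essentially the same route the paper intends: the corollary is left without an explicit proof because it is the direct combination of the preceding lemma (which invokes the classical geodesic/log-map formula of \cite{AMS} on $\Gr(k+1,n+1)$) with the identification of $\Graff(k,n)$ as an open subset via $j$ and the distance agreement of Proposition~\ref{prop:validation}. Your additional checks --- that $Y_{\mathbb{A}+b}^\tp Q = 0$ forces $Q\Theta U^\tp$ to be horizontal, and that the $U$ and $\Theta$ from the log-map \textsc{svd} agree with those of \eqref{eq:scsvd} --- are exactly the bookkeeping the paper glosses over.
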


The Grassmann distance in \eqref{eq:grassdist} is the best known distance on the Grassmannian. But there are in fact several common distances on the Grassmannian \cite[Table~2]{YL} and we may extend them to the affine Grassmannian by applying the embedding $j : \Graff(k,n) \to \Gr(k+1,n+1)$ and emulating our arguments in this section. We summarize these distances in Table~\ref{tab:distances}.
\begin{table*}[ht]
\caption{Distances on $\Graff(k,n)$ in terms of affine principal angles and Stiefel coordinates. The matrices $U, V \in \O(k+1)$ in the right column of Table~\ref{tab:distances} are the ones in \eqref{eq:scsvd}.}
\label{tab:distances}
\centering
\renewcommand{\arraystretch}{1.75}
\small
\begin{tabular}{lll}
 & \textit{Affine principal angles} & \textit{Stiefel coordinates}\\
Asimov & $d^{\alpha}_{\Graff(k,n)}(\mathbb{A} + b, \mathbb{B} + c) =  \theta_{k+1}$ & $\cos^{-1} \lVert Y_{\mathbb{A} + b}^\tp  Y_{\mathbb{B} + c} \rVert_2$\\
Binet--Cauchy & $d^{\beta}_{\Graff(k,n)}(\mathbb{A} + b, \mathbb{B} + c) = \left(1 - \prod_{i=1}^{k+1}\cos^2\theta_i\right)^{1/2}$ & $(1 - (\det Y_{\mathbb{A} + b}^\tp  Y_{\mathbb{B} + c})^2)^{1/2}$\\
Chordal & $d^{\kappa}_{\Graff(k,n)}(\mathbb{A} + b, \mathbb{B} + c) = \left(\sum_{i=1}^{k+1}\sin^2\theta_i\right)^{1/2}$ & $\frac{1}{\sqrt{2}} \lVert Y_{\mathbb{A} + b}Y_{\mathbb{A} + b}^\tp  - Y_{\mathbb{B} + c}Y_{\mathbb{B} + c}^\tp  \rVert_F$\\
Fubini--Study & $d^{\phi}_{\Graff(k,n)}(\mathbb{A} + b, \mathbb{B} + c) = \cos^{-1} \left(\prod_{i=1}^{k+1}\cos \theta_i\right)$ & $\cos^{-1} \lvert \det Y_{\mathbb{A} + b}^\tp  Y_{\mathbb{B} + c} \rvert$\\
Martin & $d^{\mu}_{\Graff(k,n)}(\mathbb{A} + b, \mathbb{B} + c) = \left( \log \prod_{i=1}^{k+1} 1/\cos^2 \theta_i \right)^{1/2}$ & $(-2 \log \det Y_{\mathbb{A} + b}^\tp  Y_{\mathbb{B} + c})^{1/2}$\\
Procrustes & $d^{\rho}_{\Graff(k,n)}(\mathbb{A} + b, \mathbb{B} + c) = 2\left(\sum_{i=1}^{k+1}\sin^2(\theta_i/2)\right)^{1/2}$ & $\lVert Y_{\mathbb{A} + b}U - Y_{\mathbb{B} + c}V \rVert_F$\\
Projection & $d^{\pi}_{\Graff(k,n)}(\mathbb{A} + b, \mathbb{B} + c) = \sin \theta_{k+1}$ & $\lVert Y_{\mathbb{A} + b}Y_{\mathbb{A} + b}^\tp  - Y_{\mathbb{B} + c}Y_{\mathbb{B} + c}^\tp  \rVert_2$\\
Spectral & $d^{\sigma}_{\Graff(k,n)}(\mathbb{A} + b, \mathbb{B} + c) = 2\sin (\theta_{k+1}/2)$ & $\lVert Y_{\mathbb{A} + b}U - Y_{\mathbb{B} + c}V \rVert_2$
\end{tabular}
\end{table*}

\subsection{Distances on $\Graff(\infty,\infty)$}\label{sec:distdifferent}

The problem of defining distances between \emph{linear} subspaces of different dimensions has recently been resolved in \cite{YL}.  We show here that the  framework in \cite{YL} may be adapted  for \emph{affine} subspaces. This is expected to be important in modeling mixtures of affine subspaces of different dimensions \cite{SLLM}. The proofs of Lemma~\ref{lem:ainfty}, Theorems~\ref{thm2} and \ref{thm:aothermetrics} are similar to those of their linear counterparts \cite[Lemma~3, Theorems~7 and 12]{YL}  and are omitted.

Our first observation is that the Grassmann distance \eqref{eq:graffdist} on $\Graff(k,n)$ does not depend on the ambient space $\mathbb{R}^n$ and may thus be extended to $\Graff(k,\infty)$.
\begin{lemma}\label{lem:ainfty}
The value $d_{\Graff(k,n)}(\mathbb{A}+b,\mathbb{B}+c)$ of two $k$-flats $\mathbb{A}+b$ and $\mathbb{B}+c \in \Graff(k,n)$ is independent of $n$, the dimension of their ambient space. Consequently, $d_{\Graff(k,n)}$ induces a distance $d_{\Graff(k,\infty)}$ on $\Graff(k,\infty)$.
\end{lemma}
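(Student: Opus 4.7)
The plan is to verify directly from the formula in Theorem~\ref{thm3} that the distance is preserved by the natural inclusion $\iota_n : \Graff(k,n)\hookrightarrow \Graff(k,n+1)$, and then observe that this immediately makes the distance well-defined on the direct limit.

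First, I would track how Stiefel coordinates behave under $\iota_n$. If $\mathbb{A}+b\in \Graff(k,n)$ has orthogonal affine coordinates $[A,b_0]\in \mathbb{R}^{n\times(k+1)}$ with $A^\tp A=I$ and $A^\tp b_0=0$, then $\iota_n(\mathbb{A}+b)$ has orthogonal affine coordinates $[\widetilde A,\widetilde b_0] = \bigl[\begin{smallmatrix} A\\0\end{smallmatrix}, \begin{smallmatrix} b_0\\0\end{smallmatrix}\bigr]\in \mathbb{R}^{(n+1)\times(k+1)}$, because appending a zero row preserves orthonormality and orthogonality, and $\lVert\widetilde b_0\rVert=\lVert b_0\rVert$. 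Hence its matrix of Stiefel coordinates is
\[
\widetilde Y_{\mathbb{A}+b} = \begin{bmatrix} A & b_0/\sqrt{1+\lVert b_0\rVert^2}\\ 0 & 0 \\ 0 & 1/\sqrt{1+\lVert b_0\rVert^2} \end{bmatrix}\in \V(k+1,n+2),
\]
i.e., $Y_{\mathbb{A}+b}$ with an inserted zero row just above the bottom row.

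Next, I would compute $\widetilde Y_{\mathbb{A}+b}^\tp \widetilde Y_{\mathbb{B}+c}$ for any $\mathbb{B}+c\in \Graff(k,n)$ treated the same way. Since inner products ignore inserted zero rows, this product equals $Y_{\mathbb{A}+b}^\tp Y_{\mathbb{B}+c}$ as $(k+1)\times(k+1)$ matrices. Therefore the two matrices have identical singular values, hence identical affine principal angles $\theta_1,\dots,\theta_{k+1}$, and so by \eqref{eq:graffdist},
\[
d_{\Graff(k,n+1)}\bigl(\iota_n(\mathbb{A}+b),\iota_n(\mathbb{B}+c)\bigr) = d_{\Graff(k,n)}(\mathbb{A}+b,\mathbb{B}+c).
\]
Iterating the inclusions shows the value is independent of any $n\ge k$ in which we embed $\mathbb{A}+b$ and $\mathbb{B}+c$.

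Finally, for the induced distance on $\Graff(k,\infty) = \varinjlim \Graff(k,n)$: given $\mathbb{A}+b,\mathbb{B}+c\in \Graff(k,\infty)$, choose any $n$ large enough so that both admit representatives in $\Graff(k,n)$ (which exists by the definition of the direct limit) and set $d_{\Graff(k,\infty)}(\mathbb{A}+b,\mathbb{B}+c) \coloneqq d_{\Graff(k,n)}(\mathbb{A}+b,\mathbb{B}+c)$. The first part guarantees this is independent of the choice of $n$ as well as of the choice of representatives, so the definition is unambiguous. The metric axioms transfer from $d_{\Graff(k,n)}$ verbatim since any two or three given flats all live in a common finite-dimensional stage.

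There is no real obstacle here: once one writes down how $Y_{\mathbb{A}+b}$ changes under $\iota_n$, the key product $Y^\tp Y'$ is visibly unchanged, and the second claim is a formality of directed limits. The only point requiring slight care is confirming that the \emph{orthogonal} affine coordinates (not merely the affine coordinates) transform as described, which is immediate because inserting a zero row preserves the three defining conditions $A^\tp A=I$, $A^\tp b_0=0$, $\spn(A)+b_0=\mathbb{A}+b$.
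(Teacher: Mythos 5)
Your proof is correct. The paper itself omits the proof of this lemma, stating only that it is similar to its linear counterpart \cite[Lemma~3]{YL}, and your argument --- padding the orthogonal affine coordinates with a zero row, observing that $\lVert b_0\rVert$ and hence the Stiefel coordinates are unchanged except for an inserted zero row, so that $Y_{\mathbb{A}+b}^\tp Y_{\mathbb{B}+c}$ and therefore the affine principal angles are preserved under $\iota_n$, followed by the standard direct-limit formality --- is precisely the routine verification the authors had in mind.
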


Our second observation is that for a $k$-dimensional affine subspace $\mathbb{A} + b$ and an $l$-dimensional affine subspace $\mathbb{B} + c$, assuming $k \le l$ without loss of generality, (i) the distance from $\mathbb{A} + b$ to the set of $k$-dimensional affine subspaces contained in $\mathbb{B} + c$ \emph{equals} (ii) the distance from $\mathbb{B} + c$ to the set of $l$-dimensional affine subspaces containing $\mathbb{A} + b$. Their common value then defines a natural distance between  $\mathbb{A} + b$ and $\mathbb{B} + c$.

Note that (i) is a distance in $\Graff(k,n)$ whereas (ii) is a distance in $\Graff(l,n)$. Furthermore, the set in (i) is precisely  $\Psi_{+}(\mathbb{A}+b)$ and the set in (ii) is precisely $\Psi_{-}(\mathbb{B}+c)$ --- the affine Schubert varieties introduced in Definition~\ref{def:Psi+-}.
\begin{theorem}\label{thm2}
Let $k \le l \le n$.
For any  $\mathbb{A}+b \in \Graff(k,n)$ and $\mathbb{B}+c \in \Graff(l,n)$, the following distances are equal,
\begin{equation}\label{eq:equalv}
d_{\Graff(k,n)}\bigl(\mathbb{A}+b,\Psi_{-}(\mathbb{B}+c)\bigr)=d_{\Graff(l,n)}\bigl(\mathbb{B}+c, \Psi_{+}(\mathbb{A}+b)\bigr),
\end{equation}
and their common value $\delta(\mathbb{A}+b,\mathbb{B}+c)$ may be computed explicitly as
\begin{equation}\label{eq:graffdelta}
\delta(\mathbb{A}+b,\mathbb{B}+c)=\Bigl(\sum\nolimits_{i=1}^{\min(k,l)+1}\theta_i(\mathbb{A}+b,\mathbb{B}+c)^2 \Bigr)^{1/2}.
\end{equation}
\end{theorem}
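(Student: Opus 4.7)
\bigskip

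The plan is to reduce everything to the linear Grassmannian analogue proven in \cite[Theorem~7]{YL} via the embedding $j : \Graff(k,n) \hookrightarrow \Gr(k+1, n+1)$ from Theorem~\ref{thm:alg}. By construction, $d_{\Graff(k,n)}$ is the pullback of $d_{\Gr(k+1,n+1)}$ under $j$, and the affine principal angles $\theta_i(\mathbb{A}+b,\mathbb{B}+c)$ are, by definition, the ordinary principal angles between $j(\mathbb{A}+b)\in \Gr(k+1,n+1)$ and $j(\mathbb{B}+c)\in \Gr(l+1,n+1)$, so one expects the linear statement to port over directly.

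First, I would check that $j$ respects the Schubert containments. If $\mathbb{Y}+z\subseteq \mathbb{B}+c$, writing $z = c+v$ with $v\in\mathbb{B}$ gives $\mathbb{Y}\subseteq \mathbb{B}$ and $z+e_{n+1}=v+c+e_{n+1}\in j(\mathbb{B}+c)$, so $j(\mathbb{Y}+z)\subseteq j(\mathbb{B}+c)$; an analogous computation shows that $\mathbb{A}+b\subseteq \mathbb{X}+y$ implies $j(\mathbb{A}+b)\subseteq j(\mathbb{X}+y)$. Writing $\Omega_-(j(\mathbb{B}+c))\coloneqq\{\mathbb{W}\in\Gr(k+1,n+1):\mathbb{W}\subseteq j(\mathbb{B}+c)\}$ and $\Omega_+(j(\mathbb{A}+b))$ similarly, this yields the inclusions $j(\Psi_-(\mathbb{B}+c))\subseteq \Omega_-(j(\mathbb{B}+c))$ and $j(\Psi_+(\mathbb{A}+b))\subseteq \Omega_+(j(\mathbb{A}+b))$. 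Applying \cite[Theorem~7]{YL} to the pair $j(\mathbb{A}+b),j(\mathbb{B}+c)$ then gives
\[
d_{\Gr(k+1,n+1)}\bigl(j(\mathbb{A}+b),\Omega_-(j(\mathbb{B}+c))\bigr) = d_{\Gr(l+1,n+1)}\bigl(j(\mathbb{B}+c),\Omega_+(j(\mathbb{A}+b))\bigr) = \Bigl(\sum_{i=1}^{k+1}\theta_i^2\Bigr)^{1/2},
\]
and, together with the inclusions above, provides the lower bound in both sides of \eqref{eq:equalv} as well as the lower bound in \eqref{eq:graffdelta}.

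The hard part will be matching these lower bounds with tight upper bounds. The minimizer produced in \cite[Theorem~7]{YL} on the $\mathbb{B}+c$ side is $\mathbb{W}^\ast=\spn(q_1,\dots,q_{k+1})$ where $q_i$ are the first $k+1$ columns of $Y_{\mathbb{B}+c}V$ from the SVD of $Y_{\mathbb{A}+b}^{\tp}Y_{\mathbb{B}+c}$, and symmetrically on the other side. For $\mathbb{W}^\ast$ to lie in the subset $j(\Psi_-(\mathbb{B}+c))$ rather than merely in $\Omega_-(j(\mathbb{B}+c))$, we need $\mathbb{W}^\ast\not\subseteq E_n$. By Theorem~\ref{thm:alg}(ii)--(iii) together with Proposition~\ref{prop:OmegaGraff}, $j(\Psi_-(\mathbb{B}+c))$ is the complement inside $\Omega_-(j(\mathbb{B}+c))$ of the subvariety of $(k+1)$-planes contained in $E_n\cap j(\mathbb{B}+c)$, and this complement is Zariski open and dense. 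Since $d_{\Gr(k+1,n+1)}(j(\mathbb{A}+b),\cdot)$ is continuous on $\Omega_-(j(\mathbb{B}+c))$, the infimum over the open dense subset $j(\Psi_-(\mathbb{B}+c))$ coincides with that over the whole $\Omega_-(j(\mathbb{B}+c))$, giving the tight upper bound. The same density argument on the $\Psi_+$ side yields the matching upper bound there, and together these prove both the equality \eqref{eq:equalv} and the explicit formula \eqref{eq:graffdelta}. The only subtlety I anticipate is checking that the generic minimizer $\mathbb{W}^\ast$ actually has nonzero component in $e_{n+1}$ (equivalently, that at least one of $V_{l+1,1},\dots,V_{l+1,k+1}$ is nonzero), which I would handle either directly when $\sigma_i$'s are distinct or, in the degenerate case, by a small perturbation within the degenerate eigenspace.
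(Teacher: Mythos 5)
Your argument is correct, and it is worth noting that the paper itself does not supply a proof of this theorem: it states only that the proof is ``similar to'' that of the linear counterpart \cite[Theorem~7]{YL}, presumably meaning that one re-runs the construction of that proof (explicit minimizers built from principal vectors) in the affine setting. Your route is genuinely different and arguably cleaner: rather than redoing the argument, you reduce to the linear statement by applying \cite[Theorem~7]{YL} to $j(\mathbb{A}+b)$ and $j(\mathbb{B}+c)$, and then bridge the gap between the affine Schubert varieties and the honest Schubert varieties $\Omega_{\pm}$ by a density-and-continuity argument. The key identifications you need do hold: $j\bigl(\Psi_{-}(\mathbb{B}+c)\bigr)=\Omega_{-}\bigl(j(\mathbb{B}+c)\bigr)\setminus \Gr_{k+1}(\mathbb{B})$, where $\mathbb{B}=j(\mathbb{B}+c)\cap E_n$ is $l$-dimensional, so the removed locus $\Gr(k+1,l)$ has codimension $k+1\ge 1$ in $\Omega_{-}\bigl(j(\mathbb{B}+c)\bigr)\cong\Gr(k+1,l+1)$ and the complement is dense; since the point-to-set distances in the statement are infima, density plus continuity of $d_{\Gr(k+1,n+1)}\bigl(j(\mathbb{A}+b),\cdot\bigr)$ gives the matching upper bound, and no attainment is required. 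Two small remarks. First, on the $\Psi_{+}$ side no density argument is needed at all: any $(l+1)$-plane containing $j(\mathbb{A}+b)$ contains $b+e_{n+1}\notin E_n$, hence already lies in $j\bigl(\Graff(l,n)\bigr)$, so $j\bigl(\Psi_{+}(\mathbb{A}+b)\bigr)=\Omega_{+}\bigl(j(\mathbb{A}+b)\bigr)$ exactly. Second, the ``subtlety'' you flag in your last sentence --- whether the minimizer $\mathbb{W}^{\ast}$ from \cite{YL} has nonzero $e_{n+1}$-component --- is superfluous: your density argument already yields equality of the infima whether or not the minimizer lies in the open subset, and the theorem claims only the value of the distance, not its attainment. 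You should, however, spell out the reverse inclusion $\Omega_{-}\bigl(j(\mathbb{B}+c)\bigr)\cap j\bigl(\Graff(k,n)\bigr)\subseteq j\bigl(\Psi_{-}(\mathbb{B}+c)\bigr)$ (i.e., that $j(\mathbb{Y}+z)\subseteq j(\mathbb{B}+c)$ forces $\mathbb{Y}+z\subseteq\mathbb{B}+c$), which you currently only assert; it follows in one line from $j(\mathbb{B}+c)\cap E_n=\mathbb{B}$ and the normalization of the $e_{n+1}$-coefficient.
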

The affine principal angles $\theta_1,\dots,\theta_{\min(h,l)+1}$ are as defined in Theorem~\ref{thm3} except that now they correspond to the singular values of the rectangular matrix 
\[
Y_{\mathbb{A} + b}^\tp  Y_{\mathbb{B} + c} =
\begin{bmatrix}
A& b_0/\sqrt{1+\lVert b_0\rVert^2}\\
0& 1/\sqrt{1+\lVert b_0\rVert^2}
\end{bmatrix}^\tp 
\begin{bmatrix}
B& c_0/\sqrt{1+\lVert c_0\rVert^2}\\
0& 1/\sqrt{1+\lVert c_0\rVert^2}
\end{bmatrix} \in \mathbb{R}^{(k+1) \times (l+1)}.
\]
Like its counterpart for linear subspaces in \cite[Theorem~7]{YL}, $\delta$ defines a distance between the respective affine subspaces in the sense of a distance of a point to a set. It  reduces to the Grassmann distance $d_{\Graff(k,n)}$ in  \eqref{eq:graffdist} when $\dim \mathbb{A} = \dim \mathbb{B} = k$.

Our third observation is that, like $d_{\Graff(k,n)}$, the distances in Table~\ref{tab:distances} may be extended in the same manner to affine subspaces of different dimensions.
\begin{theorem}\label{thm:aothermetrics}
Let $k \le l \le n$. Let $\mathbb{A}+b\in \Graff(k,n)$, $\mathbb{B}+c\in\Graff(l,n)$. Then
\[
 d^*_{\Graff(k,n)}\bigl(\mathbb{A}+b, \Psi_{-}(\mathbb{B}+c)\bigr) =  d^*_{\Graff(l,n)}\bigl(\mathbb{B}+c, \Psi_{+}(\mathbb{A}+b)\bigr)
\]
for  $* = \alpha, \beta, \kappa, \mu, \pi, \rho, \sigma, \phi$. Their common value $\delta^*(\mathbb{A}+b,\mathbb{B}+c)$ is given by:
{\small
\begin{align*}
\delta^{\alpha}(\mathbb{A}+b, \mathbb{B}+c) &=  \theta_{k+1}, &\delta^{\beta}(\mathbb{A}+b, \mathbb{B}+c) &= \Bigl(1 - \prod\nolimits_{i=1}^{k+1}\cos^2\theta_i\Bigr)^{1/2},\\
\delta^{\pi}(\mathbb{A}+b, \mathbb{B}+c) &= \sin \theta_{k+1}, &\delta^{\mu}(\mathbb{A}+b, \mathbb{B}+c) &= \Bigl( \log \prod\nolimits_{i=1}^{k+1}\frac{1}{\cos^2 \theta_i}\Bigr)^{1/2},\\ 
\delta^{\sigma}(\mathbb{A}+b, \mathbb{B}+c) &= 2\sin (\theta_{k+1}/2),  &\delta^{\phi}(\mathbb{A}+b, \mathbb{B}+c) &= \cos^{-1}\bigl(\prod\nolimits_{i=1}^{k+1}\cos \theta_i\Bigr),\\
\delta^{\kappa}(\mathbb{A}+b, \mathbb{B}+c) &= \Bigl(\sum\nolimits_{i=1}^{k+1}\sin^2\theta_i\Bigr)^{1/2}, &\delta^{\rho}(\mathbb{A}+b, \mathbb{B}+c) &= \Bigl(2\sum\nolimits_{i=1}^{k+1}\sin^2(\theta_i/2)\Bigr)^{1/2},
\end{align*}}
where $\theta_1,\dots,\theta_{k+1}$ are as defined above.
\end{theorem}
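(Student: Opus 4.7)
The plan is to mirror the argument for Theorem~\ref{thm2}, treating all eight distances $d^*$ uniformly. The unifying observation is that each entry in the left column of Table~\ref{tab:distances} is a coordinate-wise monotone nondecreasing function of the sorted affine principal angle vector $(\theta_1,\dots,\theta_{k+1})$. Since the affine principal angles are by definition the principal angles of $j(\mathbb{A}+b)\in\Gr(k+1,n+1)$ with $j(\mathbb{B}+c)\in\Gr(l+1,n+1)$, the problem reduces via the embedding $j$ of Theorem~\ref{thm:alg} to a minimization in the usual Grassmannian, which is exactly the content of \cite[Theorem~12]{YL}.

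The key preliminary step is to identify the affine Schubert varieties inside the image of $j$. A direct check shows that $j(\Psi_{-}(\mathbb{B}+c))$ equals the Zariski open dense subset of $\Gr_{k+1}(j(\mathbb{B}+c))\cong\Gr(k+1,l+1)$ consisting of $(k+1)$-planes not contained in $E_n\cap j(\mathbb{B}+c)=\mathbb{B}$, while $j(\Psi_{+}(\mathbb{A}+b))$ equals the entire $\{\mathbb{W}\in\Gr(l+1,n+1):j(\mathbb{A}+b)\subseteq\mathbb{W}\}$, since any $(l+1)$-plane containing $j(\mathbb{A}+b)$ inherits a vector with nonzero $(n+1)$-coordinate and therefore belongs to $j(\Graff(l,n))$.

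The computation then proceeds by the standard interlacing of principal angles. For $\mathbb{Y}+z\in\Psi_{-}(\mathbb{B}+c)$ we have $j(\mathbb{Y}+z)\subseteq j(\mathbb{B}+c)$, hence $\theta_i(\mathbb{A}+b,\mathbb{Y}+z)\ge\theta_i(\mathbb{A}+b,\mathbb{B}+c)$ for $i=1,\dots,k+1$, with simultaneous equality attained at the explicit $(k+1)$-plane built from the first $k+1$ affine principal vectors on the $\mathbb{B}+c$-side, read off from an \textsc{svd} of $Y_{\mathbb{A}+b}^{\tp}Y_{\mathbb{B}+c}\in\mathbb{R}^{(k+1)\times(l+1)}$. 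Monotonicity of $d^*$ in the angles then gives $d^*_{\Graff(k,n)}(\mathbb{A}+b,\Psi_{-}(\mathbb{B}+c))=\delta^*(\mathbb{A}+b,\mathbb{B}+c)$. The symmetric argument for $\mathbb{X}+y\in\Psi_{+}(\mathbb{A}+b)$ uses the dual interlacing $\theta_i(\mathbb{B}+c,\mathbb{X}+y)\le\theta_i(\mathbb{B}+c,\mathbb{A}+b)$ together with $l-k$ forced zero angles coming from $\mathbb{A}+b\subseteq\mathbb{X}+y$, and produces the same value $\delta^*(\mathbb{A}+b,\mathbb{B}+c)$ using the affine principal vectors on the $\mathbb{A}+b$-side.

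I expect the main obstacle to be verifying the two ingredients that make this reduction work: the case-by-case monotonicity of each of the eight distances in Table~\ref{tab:distances} as a function of the principal angle vector—straightforward but repetitive—and confirming that the optimal $(k+1)$-plane produced in the linear minimization for $\Psi_{-}$ lies in $j(\Graff(k,l))$ rather than in the excised subvariety $\Gr_{k+1}(\mathbb{B})$. The latter is immediate from the explicit form of the minimizer, which inherits the nonzero $(n+1)$-component of $j(\mathbb{B}+c)$; alternatively, one may appeal to continuity of $d^*$ together with Zariski density of $j(\Graff(k,l))$ in $\Gr(k+1,l+1)$ to upgrade infimum to minimum.
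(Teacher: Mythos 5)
Your proposal follows essentially the route the paper intends: it omits the proof of Theorem~\ref{thm:aothermetrics} outright, deferring to its linear counterpart \cite[Theorem~12]{YL}, and the natural way to transfer that argument is exactly what you describe --- push everything into $\Gr(k+1,n+1)$ via the embedding $j$ of Theorem~\ref{thm:alg}, identify $j\bigl(\Psi_{-}(\mathbb{B}+c)\bigr)$ with a Zariski open dense subset of $\Gr_{k+1}\bigl(j(\mathbb{B}+c)\bigr)$ and $j\bigl(\Psi_{+}(\mathbb{A}+b)\bigr)$ with all of $\Omega_{+}\bigl(j(\mathbb{A}+b)\bigr)$, and exploit the monotonicity of each $d^*$ in the principal angles. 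Your two identifications under $j$, and your continuity-plus-density argument for handling the excised subvariety $\Gr_{k+1}(\mathbb{B})$, are correct and are precisely the points that need care in the transfer.

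One step as written would fail, however. For the $\Psi_{+}$ half you invoke the inequality $\theta_i(\mathbb{B}+c,\mathbb{X}+y)\le\theta_i(\mathbb{B}+c,\mathbb{A}+b)$; since every $d^*$ in Table~\ref{tab:distances} is nondecreasing in the angles, this inequality points the wrong way and cannot yield the required lower bound $d^*_{\Graff(l,n)}(\mathbb{B}+c,\mathbb{X}+y)\ge\delta^*(\mathbb{A}+b,\mathbb{B}+c)$. What is needed is the other half of the Cauchy-type interlacing for singular values under row deletion: extending an orthonormal basis of $j(\mathbb{A}+b)$ to one of $j(\mathbb{X}+y)$ and comparing $Y_{\mathbb{X}+y}^\tp Y_{\mathbb{B}+c}$ with its submatrix $Y_{\mathbb{A}+b}^\tp Y_{\mathbb{B}+c}$ gives $\theta_{i+(l-k)}(\mathbb{B}+c,\mathbb{X}+y)\ge\theta_i(\mathbb{B}+c,\mathbb{A}+b)$ for $i=1,\dots,k+1$, which bounds the top $k+1$ of the $l+1$ angles from below and hence bounds each $d^*$ from below by $\delta^*$. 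Relatedly, the $l-k$ zero angles are not ``forced'' by the containment $\mathbb{A}+b\subseteq\mathbb{X}+y$ for an arbitrary member of $\Psi_{+}(\mathbb{A}+b)$; they are achieved only by the explicit minimizer. With that correction the argument goes through uniformly for all eight distances.
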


Like the $\delta$ in Theorem~\ref{thm2}, the $\delta^*$'s in Theorem~\ref{thm:aothermetrics} are distances in the sense of distances from a point to a set, but they are not \emph{metrics}. The doubly infinite Grassmannian of linear subspaces of all dimensions  $\Gr(\infty,\infty)$ has been shown to be metrizable  \cite[Section~5]{YL} with respect to any of the common distances between linear subspaces. 

Our last observation is that $\Graff(\infty,\infty)$ can likewise be metricized, i.e., a metric can be defined between any pair of affine subspaces of arbitrary dimensions. The embedding $j:\Graff(k,n)\to \Gr(k+1,n+1)$ induces an embedding of sets
$j_\infty:\Graff(\infty,\infty)\to \Gr(\infty,\infty)$.
So $\Graff(\infty,\infty)$ may be identified with $j_\infty\bigl(\Graff(\infty,\infty)\bigr)$ and regarded as a subset of $\Gr(\infty,\infty)$. It inherits any metric on $\Gr(\infty,\infty)$: If $\mathbb{A}+b$ and $\mathbb{B}+c$ are affine subspaces of possibly different dimensions, we may define
\[
d^*_{\Graff(\infty,\infty)}(\mathbb{A}+b,\mathbb{B}+c) \coloneqq d^*_{\Gr(\infty,\infty)}\bigl(j_\infty(\mathbb{A}+b),j_\infty(\mathbb{B}+c)\bigr),
\]
for any choice of metric $d^*_{\Gr(\infty,\infty)}$ on $\Gr(\infty,\infty)$. For example, the metrics  in Table~\ref{tab:metricaff} correspond to Grassmann, chordal, and Procrustes distances.
\begin{table}[ht]
\caption{Metrics on $\Graff(\infty,\infty)$ in terms of affine principal angles and $k = \dim \mathbb{A}$, $l = \dim \mathbb{B}$.}
\centering
\label{tab:metricaff}
\renewcommand{\arraystretch}{1.75}
\small
\begin{tabular}{ll}
Grassmann metric & $d_{\Graff(\infty, \infty)}(\mathbb{A}+b, \mathbb{B}+c) = \Bigl(\lvert k - l \rvert\pi^2/4 + \sum_{i=1}^{\min(k+1,l+1)}\theta_i^2\Bigr)^{1/2}$\\
Chordal metric & $d_{\Graff(\infty, \infty)}^{\kappa}(\mathbb{A}+b, \mathbb{B}+c) = \Bigl(\lvert k - l \rvert + \sum_{i=1}^{\min(k+1,l+1)} \sin^2\theta_i\Bigr)^{1/2}$ \\
Procrustes metric & $d_{\Graff(\infty, \infty)}^{\rho}(\mathbb{A}+b, \mathbb{B}+c) =\Bigl(\lvert k - l \rvert + 2\sum_{i=1}^{\min(k+1,l+1)}\sin ^2(\theta_i/2)\Bigr)^{1/2}$
\end{tabular}
\end{table}

\section{Probability on the affine Grassmannian}\label{sec:prob}

To do statistical estimation and inference with affine subspace-valued data, i.e., with $\Graff(k,n)$ in place of $\mathbb{R}^n = \Graff(0,n)$, we will need reasonable notions of probability densities on $\Graff(k,n)$. We introduce three here: uniform, Langevin (or von Mises--Fisher), and Langevin--Gaussian.

The Riemannian metric on $\Gr(k,n)$ that induces the Grassmann distance in \eqref{eq:grassdist} also induces a volume density $d\gamma_{k,n}$ on $\Gr(k,n)$ \cite[Proposition~9.1.12]{Nicolaescu} with
\begin{equation}\label{eq:volume}
\Vol\bigl(\Gr(k,n)\bigr)=\int_{\Gr(k,n)} |d\gamma_{k,n}| =\binom{n}{k}\frac{\prod_{j=1}^{n}{\omega_j}}{\bigl(\prod_{j=1}^k\omega_j \bigr)\bigl(\prod_{j=1}^{n-k}\omega_j\bigr)},
\end{equation}
where $\omega_m \coloneqq \pi^{m/2}/\Gamma(1+m/2)$, volume of the unit ball in $\mathbb{R}^m$. A natural uniform probability density on $\Gr(k,n)$ is given by $d\mu_{k,n} \coloneqq \Vol\bigl(\Gr(k,n)\bigr)^{-1} \lvert d\gamma_{k,n}\rvert$.

By Theorem~\ref{thm:alg}(ii), $\Graff(k,n)$ is a Zariski open dense subset in $\Gr(k+1,n+1)$ and we must have
$\mu_{k+1,n+1}\bigl(\Graff(k,n)\bigr) = 1$.
Therefore the restriction of $\mu_{k+1,n+1}$ to $\Graff(k,n)$ gives us a \emph{uniform probability measure} on $\Graff(k,n)$. It has an interesting property --- a volumetric analogue of Theorem~\ref{thm2}: The probability that a randomly chosen $l$-dimensional affine subspace  contains  $\mathbb{A} + b$ equals the probability that a randomly chosen $k$-dimensional affine subspace is contained in $\mathbb{B} + c$.
\begin{theorem}
Let $k \le l \le n$ be such that $k+l \ge n$. Let $\mathbb{A}+b\in \Graff(k,n)$ and $\mathbb{B}+c\in\Graff(l,n)$. The relative volume of $\Psi_{+}(\mathbb{A}+b)$ in $\Graff(l,n)$ and $\Psi_{-}(\mathbb{B}+c)$ in $\Graff(k,n)$ are identical.
Furthermore, their common value does not depend on the choices of $\mathbb{A}+b$ and $\mathbb{B}+c$ but only on $k,l,n$ and is given by
\[
\mu_{l+1,n+1}\bigl(\Psi_{+}(\mathbb{A}+b)\bigr)=\mu_{k+1,n+1}\bigl(\Psi_{-}(\mathbb{B}+c)\bigr) = \frac{(l+1)!(n-k)!\prod_{j=l-k+1}^{l+1}\omega_j}{(n+1)!(l-k)!\prod_{j=n-k+1}^{n+1}\omega_j}.
\]
\end{theorem}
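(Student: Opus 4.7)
The plan is to push everything through the embedding $j$ of Theorem~\ref{thm:alg}, identify each $\Psi_\pm$ with a classical (linear) Schubert variety in the ambient Grassmannian, promote the diffeomorphism of Proposition~\ref{prop:OmegaGraff} to an isometry, and then read off the two ratios directly from \eqref{eq:volume}. Since $\Psi_+(\mathbb{A}+b)$ and $\Psi_-(\mathbb{B}+c)$ have strictly smaller dimension than their ambient affine Grassmannians, I will interpret each $\mu_{\ast,\ast}(\Psi_\pm)$ as the ratio of the intrinsic Riemannian volume of the submanifold to the ambient volume $\Vol(\Gr(\ast,\ast))$ of \eqref{eq:volume}; this is the natural extension of the normalization used to define $\mu_{\ast,\ast}$.

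\textbf{Step 1 (reduction to Grassmannian Schubert varieties).} Set $\mathbb{V}_1 \coloneqq j(\mathbb{A}+b) \in \Gr(k+1,n+1)$ and $\mathbb{V}_2 \coloneqq j(\mathbb{B}+c) \in \Gr(l+1,n+1)$. Because $\mathbb{V}_1$ has a nonzero $(n+1)$st coordinate, no $(l+1)$-plane sitting inside $E_n$ can contain it; hence $j(\Psi_+(\mathbb{A}+b))$ coincides exactly with the linear Schubert variety $\Omega_+(\mathbb{V}_1) \coloneqq \{\mathbb{W} \in \Gr(l+1,n+1) : \mathbb{V}_1 \subseteq \mathbb{W}\}$. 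Dually, $j(\Psi_-(\mathbb{B}+c))$ differs from $\Omega_-(\mathbb{V}_2) \coloneqq \{\mathbb{W} \in \Gr(k+1,n+1) : \mathbb{W} \subseteq \mathbb{V}_2\}$ only by the subset of $(k+1)$-planes contained in $\mathbb{V}_2 \cap E_n = \mathbb{B}$, i.e.\ by a copy of $\Gr(k+1,l)$, which has codimension $k+1 \ge 1$ in $\Omega_-(\mathbb{V}_2) \cong \Gr(k+1,l+1)$ and therefore Riemannian measure zero.

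\textbf{Step 2 (isometric identification).} The diffeomorphisms $\Omega_+(\mathbb{V}_1) \cong \Gr(l-k,n-k)$ and $\Omega_-(\mathbb{V}_2) \cong \Gr(k+1,l+1)$ used in Proposition~\ref{prop:OmegaGraff} (applied to the linear case) are in fact isometric for the induced metric from the ambient Grassmannian. Indeed, writing any $\mathbb{W} \in \Omega_+(\mathbb{V}_1)$ as $\mathbb{V}_1 \oplus \mathbb{W}'$ with $\mathbb{W}' \subseteq \mathbb{V}_1^\perp$, the standard identification $\T_\mathbb{W}\Gr(l+1,n+1) \cong \operatorname{Hom}(\mathbb{W},\mathbb{W}^\perp)$ decomposes orthogonally as $\operatorname{Hom}(\mathbb{V}_1,\mathbb{W}^\perp) \oplus \operatorname{Hom}(\mathbb{W}',\mathbb{W}^\perp)$, and the tangent space to $\Omega_+(\mathbb{V}_1)$ is precisely the second summand; since $\mathbb{W}^\perp = \mathbb{V}_1^\perp \ominus \mathbb{W}'$, this is exactly the tangent space to $\Gr(l-k,\mathbb{V}_1^\perp)$ with its canonical Grassmann metric. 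The argument for $\Omega_-(\mathbb{V}_2)$ is symmetric. Consequently
\[
\Vol(\Omega_+(\mathbb{V}_1)) = \Vol(\Gr(l-k,n-k)), \qquad \Vol(\Omega_-(\mathbb{V}_2)) = \Vol(\Gr(k+1,l+1)),
\]
and these values are independent of $\mathbb{A}+b$ and $\mathbb{B}+c$ by the $\O(n+1)$-invariance of the uniform measure.

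\textbf{Step 3 (closed form and equality).} It remains to verify
\[
\frac{\Vol(\Gr(l-k,n-k))}{\Vol(\Gr(l+1,n+1))} = \frac{\Vol(\Gr(k+1,l+1))}{\Vol(\Gr(k+1,n+1))}
\]
and simplify. Substituting \eqref{eq:volume} on both sides, the $\omega_j$-factors match term by term after telescoping, and equality reduces to the binomial identity $\binom{n-k}{l-k}\binom{n+1}{k+1} = \binom{n+1}{l+1}\binom{l+1}{k+1}$, both sides evaluating to $(n+1)!/[(l-k)!(n-l)!(k+1)!]$. A direct simplification of either ratio then yields $\tfrac{(l+1)!(n-k)! \prod_{j=l-k+1}^{l+1}\omega_j}{(n+1)!(l-k)! \prod_{j=n-k+1}^{n+1}\omega_j}$, the stated value.

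The main obstacle is Step 2: one must verify that the Riemannian metric that $\Omega_\pm$ inherits from the ambient Grassmannian agrees with the canonical Grassmann metric on the abstract Grassmannian to which it is diffeomorphic—otherwise an unknown scaling factor would contaminate \eqref{eq:volume}. The tangent-space decomposition resolves this cleanly (the embedding is an orthogonal direct-summand inclusion on each tangent space), after which Steps 1 and 3 are routine bookkeeping.
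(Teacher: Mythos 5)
Your proposal is correct and takes essentially the same route as the paper: identify $\Psi_{+}(\mathbb{A}+b)$ and $\Psi_{-}(\mathbb{B}+c)$ with Grassmannians of the appropriate dimensions, read off their volumes from \eqref{eq:volume}, and divide by the ambient volumes $\Vol\bigl(\Gr(l+1,n+1)\bigr)$ and $\Vol\bigl(\Gr(k+1,n+1)\bigr)$. The only difference is that the paper obtains the Riemannian identifications by citing Proposition~\ref{prop:OmegaGraff} (which in turn leans on \cite{YL}), whereas you re-derive them inside the ambient Grassmannian via the tangent-space decomposition and explicitly dispose of the measure-zero discrepancy for $\Psi_{-}$ --- a more self-contained but equivalent argument.
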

\begin{proof}
By Theorem~\ref{thm:alg}(ii), we have
\[
\Vol\bigl(\Graff(k,n)\bigr)=\Vol\bigl(\Gr(k+1,n+1)\bigr)=\binom{n+1}{k+1}\frac{\prod_{j=1}^{n+1}{\omega_j}}{\bigl(\prod_{j=1}^{k+1}\omega_j\bigr)\bigl(\prod_{j=1}^{n-k}\omega_j)}.
\]
By Proposition~\ref{prop:OmegaGraff}, we have 
\begin{align*}
\Vol \bigl(\Psi_{+}(\mathbb{A}+b)\bigr)&=\Vol \bigl(\Gr(n-l,n-k)\bigr)=
\binom{n-k}{n-l}\frac{\prod_{j=1}^{n-k}{\omega_j}}{\bigl(\prod_{j=1}^{n-l}\omega_j\bigr)\bigl(\prod_{j=1}^{l-k}\omega_j\bigr)},\\
\Vol\bigl(\Psi_{-}(\mathbb{B}+c)\bigr)&=\Vol \bigl(\Graff(k,l)\bigr)=
\binom{l+1}{k+1}\frac{\prod_{j=1}^{l+1}{\omega_j}}{\bigl(\prod_{j=1}^{k+1}\omega_j\bigr)\bigl(\prod_{j=1}^{l-k}\omega_j\bigr)}.
\end{align*}
Dividing $\Vol \bigl(\Psi_{+}(\mathbb{A}+b)\bigr)$ and $\Vol\bigl(\Psi_{-}(\mathbb{B}+c)\bigr)$ by $\Vol \bigl(\Graff(l,n)\bigr)$ and $\Vol \bigl(\Graff(k,n)\bigr)$ respectively  completes the proof.
\end{proof}

In the following we will use  the projection coordinates in Definition~\ref{def:proj}. By embedding $\Graff(k,n)$ as a subset $X = j\bigl(\Graff(k,n)\bigr) \subseteq \Gr(k+1,n+1)$ as in Theorem~\ref{thm:alg}(ii) and noting that $X$ is an open dense subset, we have  $\mu(X) = 1$ for any Borel probability measure $\mu$ on $\Gr(k+1,n+1)$ (and that $\mu(X^c) = 0$). Hence $\Graff(k,n)$ inherits any continuous probability distribution on $\Gr(k+1,n+1)$, in particular the Langevin distribution \cite{CY}.
\begin{definition}
The \emph{Langevin distribution}, also known as the \emph{von Mises--Fisher distribution}, on $\Graff(k,n)$ is given by the probability density function
\[
f_L(P_{\mathbb{A} + b} \mid S) \coloneqq \frac{1}{\Hypergeometric{1}{1}{\frac{1}{2}(k+1)}{\frac{1}{2}(n+1)}{S}}\exp \bigl( \tr (SP_{\mathbb{A} + b})\bigr)
\]
for any $\mathbb{A} + b \in \Graff(k,n)$. Here $S \in \mathbb{R}^{(n+1) \times (n+1)}$ is symmetric and
$\prescript{}{1}F_{1}$ is the confluent hypergeometric function of the first kind of a matrix argument \cite{KE}.
\end{definition}
$\Hypergeometric{1}{1}{a}{b}{S} $ has well-known expressions as series and integrals and may be characterized via functional equations and recurrence relations.  However, its explicit expression is unimportant for us ---  the only thing to note is that it can be efficiently evaluated \cite{KE} for any  $a,b\in \mathbb{C}$ and symmetric $S \in \mathbb{C}^{(n+1) \times (n+1)}$.

Roughly speaking, the parameter $S \in \mathbb{R}^{(n+1) \times (n+1)}$ may be interpreted as a `mean direction' and its eigendecomposition $S = V \Lambda V^\tp $ gives an `orientation' $V \in \O(n+1)$ with `concentrations' $\Lambda = \diag(\lambda_1,\dots,\lambda_{n+1})$. In some sense, the Langevin distribution measures the first-order  `spread' on $\Graff(k,n)$. If $S = 0$, then the distribution reduces to the uniform distribution but if $S$ is `large' (i.e., $\lvert \lambda_i\rvert$'s are large), then the distribution concentrate about the orientation $V$. It may appear that a `Bingham distribution'  that measures second-order  `spread' can be defined by
\[
f_B(P_{\mathbb{A} + b} \mid S) \coloneqq \frac{1}{\Hypergeometric{1}{1}{\frac{1}{2}(k+1)}{\frac{1}{2}(n+1)}{S}}\exp \bigl( \tr (P_{\mathbb{A} + b}SP_{\mathbb{A} + b})\bigr)
\]
on $\Graff(k,n)$,
but this is identical to the Langevin distribution as $\tr (PSP) = \tr (SP^2) = \tr(SP)$ for any projection matrix $P$.

The Langevin distribution treats an affine subspace $\mathbb{A} + b \in \Graff(k,n)$ as a single object but there are occasions where it is desirable to distinguish between the linear subspace $\mathbb{A}\in \Gr(k,n)$ and the displacement vector $b \in \mathbb{R}^n$. We will show how a probability distribution on $\Graff(k,n)$ may be constructed by amalgamating probability distributions on $\Gr(k,n)$ and $\mathbb{R}^n$ (or rather, $\mathbb{R}^{n-k}$, as we will see). First, we will identify $\Gr(k,n)$ and $\Graff(k,n)$ with their projection affine coordinates, i.e., imposing equality in \eqref{eq:grproj} and \eqref{eq:graffmatrix},
\begin{align*}
\Gr(k,n) &= \{P \in \mathbb{R}^{n \times n}: P^\tp  = P^{2} = P, \; \tr(P) = k\},\\
\Graff(k,n) &= \{[P , b] \in \mathbb{R}^{n \times (n+1)}: P \in \Gr(k,n), \; Pb = 0\}.
\end{align*}
We will define a marginal density on the linear subspaces, and then impose a conditional density on the  displacement vectors in the orthogonal complement of the respective linear subspaces.

For concreteness, we use the Langevin distribution $f_L(P \mid S)$ on the linear spaces $P \in \Gr(k,n)$. Conditioning on $P$, we know there exists $Q \in \O(n)$ such that
$\ker(P) = \{b \in \mathbb{R}^n : Pb = 0\} = Q E_{n-k} \cong \mathbb{R}^{n-k}$,
where $E_{n-k} \coloneqq \spn\{e_1,\dots,e_{n-k}\} \subseteq \mathbb{R}^{n+1}$. We may use any probability distribution on $\ker(P) \cong \mathbb{R}^{n-k}$ but again for concreteness, a natural choice is the \emph{spherical Gaussian distribution} with probability density $f_G(x\mid \sigma^2) \coloneqq (2\pi\sigma^{2})^{-(n-k)/2} \exp (-\lVert x\rVert^{2}/2 \sigma^{2})$. The conditional density on $\ker(P)$ is then 
\begin{equation}\label{eq:condG}
f_G(b\mid P, \sigma^2) = \frac{1}{\sqrt{(2\pi\sigma^{2})^{n-k}}} \exp\left(-\frac{\lVert b\rVert^{2}}{2 \sigma^{2}}\right)
\end{equation}
for any $b \in \ker(P)$. Note that $Q^\tp b = \begin{bsmallmatrix} b' \\ 0\end{bsmallmatrix}$ where $b' \in \mathbb{R}^{n-k}$ and since $\lVert b\rVert = \lVert Q^\tp b\rVert = \lVert b'\rVert$, it is fine to have $b$ instead of $b'$ appearing on the \textsc{rhs} of \eqref{eq:condG}.  The construction gives us the following distribution.
\begin{definition}
The probability density function of the \emph{Langevin--Gaussian distribution} on $\Graff(k,n)$ is $f_{LG}([P , b]\mid S,\sigma^2) \coloneqq f_L(P\mid S) f_G(b\mid P, \sigma^2)$, i.e.,
\[
f_{LG}([P , b]\mid S,\sigma^2)= \frac{1}{\Hypergeometric{1}{1}{\frac{1}{2}k}{\frac{1}{2}n}{S}\sqrt{(2\pi\sigma^{2})^{n-k}}}\exp\biggl(\tr (SP)-\frac{\lVert b\rVert^{2}}{2 \sigma^{2}}\biggr),
\]
where $S \in \mathbb{R}^{n \times n}$ is symmetric and $\sigma^2> 0$.
\end{definition}

\section{Statistics on the affine Grassmannian}\label{sec:stat}

This section bears little relation to Section~\ref{sec:prob}. Instead of considering statistical analysis of affine subspace-valued data, we argue that the affine Grassmannian is hidden in plain sight in many standard problems of old-fashioned statistics and machine learning.

Statistical estimation problems in multivariate data analysis and machine learning often seek linear relations among variables. This translates to finding an affine subspace from  the sample data set that, in an appropriate sense, either best represents the data set or best separates it into components. In other words, statistical estimation problems are often optimization problems on the affine Grassmannian. We present four examples to illustrate this, following conventional statistical notations ($n$, $p$, $X$, $\beta$, etc).
\begin{example}[Linear Regression]\label{eg:lr}
Consider a linear regression problem with $X \in \mathbb{R}^{n \times p}$, a design matrix of explanatory variables, and $y \in \mathbb{R}^n$, a vector of response variables. Let $\mathbbm{1} = [1,\dots,1]^\tp  \in \mathbb{R}^n$ and $e_{p+1} = [0,\dots,0,1]^\tp  \in \mathbb{R}^{p+1}$. Set $\widetilde{X} = [X, \mathbbm{1} ]\in \mathbb{R}^{n \times (p+1)}$ and define the affine subspace
$\bigl\{\begin{bsmallmatrix} z \\ \beta^\tp z \end{bsmallmatrix} \in \mathbb{R}^{p+1} : z \in \mathbb{R}^p \bigr\}+ \beta_{p+1}e_{p+1}$,
chosen so that  $\widetilde{\beta} = [\beta, \beta_{p+1}]^\tp  \in \mathbb{R}^{n+1}$ minimizes the sum of squared residuals $\lVert \widetilde{X}\widetilde{\beta}  -  y\rVert^2$. Then $\beta \in \mathbb{R}^p$ is the vector of regression coefficients. The affine subspace may be written as
\[
\spn\biggl(\begin{bmatrix} I_p \\ \beta^\tp  \end{bmatrix} \biggr) + \beta_{p+1} e_{p+1}  \in \Graff(p, n+1)
\]
where $I_p$ is the $p  \times p$ identity matrix. It best represents the data $(X,y)$ in the sense of linear regression. This description corresponds to how one usually pictures linear regression --- drawing an affine hyperplane through a collection of $n$ scattered data points $(x_i,y_i)^\tp \in \mathbb{R}^p \times \mathbb{R} =\mathbb{R}^{p+1}$, where $x_i$ is the $i$th row of $X$ and $y_i$ is the $i$th entry of $y$,  $i=1,\dots,n$.
\end{example}

\begin{example}[Errors-in-Variables Regression]
We follow the same notations as in the above example. Here we concatenate the explanatory variables and response variable and assign them equal weights. The best-fitting affine subspace of the data set $\{(x_i,y_i)^\tp \in \mathbb{R}^{p+1}: x_i \in \mathbb{R}^p, \; y_i \in \mathbb{R}, \; i = 1,\dots ,n\}$ in this case is given by
\[
\spn(w) + b \in \Graff(1, p+1)
\]
where  $w, b  \in  \mathbb{R}^{p+1}$ are the minimizer of the loss function $\sum_{i=1}^{n}  \lVert(I-ww^\tp )((x_i,y_i)^\tp-b) \rVert_F^2 $ subject to $ w^\tp b= 0$, and may be obtained by solving a total least squares problem. 
\end{example}

\begin{example}[Principal Component Analysis]\label{eg:pca}
Let $\overline{x}=\frac{1}{n}X^\tp \mathbbm{1}\in\mathbb{R}^{p}$ be the sample mean of a data matrix $X \in \mathbb{R}^{n \times p}$ so that $\overline{X} = X-\mathbbm{1}\overline{x}^\tp $ is mean-centered. For $k \le p$, the $k$th principal subspace is $\spn(Z_k)$, a $k$-dimensional linear subspace of $\mathbb{R}^p$ such that $Z_k \in \mathbb{R}^{p \times k}$ maximizes $\tr(Z_k^\tp 
\smash{\overline{X}}^\tp \overline{X} Z_k), $ subject to $Z^\tp _{k} Z_{k} = I_k.$ The affine subspace
\[
\spn(Z_k) + \overline{x} \in \Graff(k, p)
\]
captures the greatest $k$-dimensional variability in the data $X$. The $k$ largest principal components of $X$ are defined successively for $k=1,\dots,p$ as orthonormal basis of $\spn(Z_k)$.
\end{example}

\begin{example}[Support Vector Machine]\label{eg:svm}
Let $\{(x_i,y_i): x_i \in \mathbb{R}^p, \; y_i = \pm 1,\; i = 1,\dots ,n\}$ be a training set  for binary classification. The best separating hyperplane is given by $w^\tp x - \beta = 0$, where  $(w, \beta) \in \mathbb{R}^p \times \mathbb{R}$ can be found by minimizing $ \lVert w\rVert$ subject to $y_{i}(w^\tp  x_i - b) \ge 1$ for all $ i = 1,\dots ,n$. In other words, the best separating hyperplane is the affine subspace 
\[
\ker (w^\tp ) + \beta \mathbbm{1} \in \Graff(1,p).
\]
\end{example}
These four examples represent a sampling of the most rudimentary classical examples. It is straightforward to extend them to include more modern considerations. We may incorporate say, sparsity or robustness, by changing the objective function used; or have matrix variables in place of vector variables by considering affine subspaces within other vector spaces, e.g., with $\mathbb{S}^n$ or $\mathbb{R}^{m \times n}$ in place of $\mathbb{R}^n$.

These simple examples may be solved in the usual manners with techniques in numerical linear algebra: least squares for linear regression, singular value decomposition for errors-in-variables regression, eigenvalue decomposition for principal component analysis, linear programming for support vector machines.  Nevertheless, viewing them in their full generality as optimization problems on the affine Grassmannian allows us to treat them on equal footings and facilitates development of new multivariate statistics/machine learning techniques.
More importantly, we argue that the prevailing approaches may be suboptimal. For instance, in Example~\ref{eg:pca} one circumvents the problem of finding a best-fitting affine subspace with a two-step heuristic: First find the empirical mean of the data set $\overline{x}$ and then mean center to reduce the problem to one of finding a best-fitting linear subspace $\spn(Z)$. But there is no reason to expect $\spn(Z) + \overline{x}$ to be the best-fitting affine subspace. In \cite{WYL}, we developed various optimization algorithms --- steepest decent, conjugate gradient, Newton method --- that allow us to directly optimize real-valued functions defined on $\Graff(k,n)$.

We would like to highlight another reason we expect the  affine Grassmannian to be useful in data analytic problems. Over the past two decades, parameterizing a data set by geometric structures has become a popular alternative to probabilistic modeling, particularly when the intrinsic dimension of the data set is low or when it satisfies obvious geometric constraints.  In this case, statistical estimation  takes into account the intrinsic geometry of the data, and the deviation from the underlying geometric structures is used as a measure of accuracy of the statistical model. The two most common geometric structures employed are (a) a mixture of affine spaces \cite{HRS, LZ, MYDF} and (b) a manifold, which often  reduces to (a) when it is treated as a collection of tangent spaces \cite{TVF} --- in fact, the first manifold learning techniques \textsc{isomap} \cite{isomap}, \textsc{lle} \cite{LLE}, and Laplacian Eigenmap \cite{Laplace} are essentially different ways to approximate a manifold by a collection of its tangent spaces.  This provides another impetus for studying $\Graff(k,n)$, which parameterizes all affine spaces of a fixed dimension in an ambient space; $\Graff(k,\infty)$, which parameterizes all affine spaces of a fixed dimension; and $\Graff(\infty,\infty)$, which parameterizes all affine spaces of all dimensions.

\section*{Acknowledgment} 

The work in this article is generously supported by AFOSR FA9550-13-1-0133, DARPA D15AP00109, NSF IIS 1546413, DMS 1209136, DMS 1057064, National Key R\&D Program of China Grant no.~2018YFA0306702,  and NSFC Grant no.~11688101. In addition, LHL's work is  supported by a DARPA Director's Fellowship and the Eckhardt Faculty Fund; KY's work is supported by the Hundred Talents Program of the Chinese Academy of Sciences and the Recruitment Program of Global Experts of China.

\end{document}